\definecolor{allrefcolors}{rgb}{0,0.5,0.4}
\title{Rel--$C^\infty$ Structures on Gromov-Witten Moduli Spaces}
\author{Mohan Swaminathan}
\newtheorem{theorem}{Theorem}[section]
\newtheorem{lemma}[theorem]{Lemma}
\newtheorem{corollary}[theorem]{Corollary}
\newtheorem{proposition}[theorem]{Proposition}
\theoremstyle{definition}
\newtheorem{definition}[theorem]{Definition}
\theoremstyle{remark}
\newtheorem{remark}[theorem]{Remark}
\numberwithin{equation}{subsection}
\newcommand{\Mbar}{\overline{\mathcal M}}
\newcommand{\delbar}{\bar\partial}
\newcommand{\Hom}{\normalfont\text{Hom}}
\newcommand{\Aut}{\normalfont\text{Aut}}
\newcommand{\bC}{\mathbb{C}}
\newcommand{\bD}{\mathbb{D}}
\newcommand{\bR}{\mathbb{R}}
\newcommand{\bZ}{\mathbb{Z}}
\newcommand{\cA}{\mathcal{A}}
\newcommand{\cB}{\mathcal{B}}
\newcommand{\cC}{\mathcal{C}}
\newcommand{\cL}{\mathcal{L}}
\newcommand{\cM}{\mathcal{M}}
\newcommand{\cN}{\mathcal{N}}
\newcommand{\cS}{\mathcal{S}}
\newcommand{\cU}{\mathcal{U}}
\newcommand{\cV}{\mathcal{V}}
\newcommand{\cZ}{\mathcal{Z}}
\newcommand{\fC}{\mathfrak{C}}
\newcommand{\fF}{\mathfrak{F}}
\newcommand{\fM}{\mathfrak{M}}
\newcommand{\fS}{\mathfrak{S}}
\newcommand{\fU}{\mathfrak{U}}
\newcommand{\fX}{\mathfrak{X}}
\newcommand{\fY}{\mathfrak{Y}}
\newcommand{\fa}{\mathfrak{a}}
\newcommand{\fo}{\mathfrak{o}}
\newcommand{\ft}{\mathfrak{t}}
\newcommand{\fu}{\mathfrak{u}}
\newcommand{\fz}{\mathfrak{z}}
\newcommand{\PShv}{\normalfont\text{PShv}}
\begin{document}
	\maketitle
	\begin{abstract}
	    We show that moduli spaces of transversely cut-out (perturbed) pseudo-holomorphic curves in an almost complex manifold carry canonical relative smooth structures (``relative to the moduli space of domain curves"). The main point is that these structures can be characterized by a universal property. The tools required are ordinary gluing analysis combined with some fundamental results from the polyfold theory of Hofer--Wysocki--Zehnder.
	\end{abstract}
	\tableofcontents
	\section{Introduction}
	\subsection{Main Result}
	Constructing smooth structures on transversely cut-out moduli spaces of holomorphic curves is an important problem (see \cite{Abou}, \cite{Ek-Sm} and \cite{Floer-Homotopy} for applications) with the main difficulty lying in constructing the smooth structure near the nodal curves. More precisely, in the presence of transversality, arguments using the implicit function theorem only show that each individual stratum of the moduli space is smooth while standard gluing analysis shows how these strata fit together \emph{topologically}. The polyfold theory of Hofer--Wysocki--Zehnder provides one possible approach to constructing smooth structures on moduli spaces of holomorphic curves (in addition to providing a framework to address the case where transversality is not present). However, it is not clear that the smooth structure we get this way is independent of various choices made in the construction. Another approach to construct smooth structures is described in \cite{FOOO-gluing}. Again, it is not clear that this produces the same smooth structures near the nodal curves as the polyfold approach. In this article, we construct \emph{canonical} relative smooth structures (a weaker notion than smooth structures -- formally defined in Definition \ref{rel-Cr-mflds})	on the moduli spaces of	regular (i.e. transversely cut-out) solutions to the (perturbed) Cauchy--Riemann equation, relative to the space of domain curves.\\\\
	There is one specific situation in which defining canonical smooth structures on moduli spaces is quite easy -- namely the case in	which the domain curve is fixed. If $C$ is a compact nodal Riemann surface and $(X,J)$ is an almost complex manifold, we may consider the space $\fM_J(C,X)$ of $J$-holomorphic maps $u:C\to X$. We say $u$ is \textbf{cut-out transversely} (or \textbf{regular}) iff the linearized
	Cauchy-Riemann operator 
	\begin{align}
		D(\delbar_J)_u:\Omega^0(C,u^*T_X)\to\Omega^{0,1}_J(\tilde C,u^*T_X)
	\end{align}
	is surjective. (Here, $\tilde C$ is the normalization of $C$, i.e., the curve got by separating the two smooth branches meeting transversely at each node of $C$.) Taking an integer $k\ge 1$ and a real number $1<p<\infty$ with $kp>2$, we can realize $\fM_J(C,X)$ as $\delbar_J^{-1}(0)\subset W^{k,p}(C,X)$. The subset $\fM^\text{reg}_J(C,X)$ consisting of	regular $u\in\fM_J(C,X)$ is open and naturally carries the structure of a smooth manifold (and this structure can be shown	to be independent of the initial choice of $k,p$ -- this also follows from Theorem \ref{rep-intro} below). \\\\
	Next, given a proper flat analytic family $\pi:\cC\to\cS$ of possibly nodal Riemann surfaces (with the fibre over $s\in\cS$ denoted by $C_s$), parametrized by a complex manifold $\cS$, we can consider the parametrized moduli space $\fM_J^\text{reg}(\pi,X)$ consisting of pairs $(s\in\cS,f:C_s\to X)$, where $f$ is $J$-holomorphic and regular. Following the discussion from the previous paragraph, we could realize this moduli space as an open subset of $\delbar_J^{-1}(0)$ inside the space $W^{k,p}(\pi,X) = \bigsqcup_{s\in\cS}W^{k,p}(C_s,X)$ consisting of pairs $(s\in\cS,f:C_s\to X)$ with $f$ of class $W^{k,p}$. A key technical difficulty here is that $W^{k,p}(\pi,X)$ does not carry any natural Banach manifold structure. There are two main reasons for this. Firstly, the topology of $C_s$ itself may change as we vary $s\in\cS$ (due to the formation of nodes). Secondly, even near a regular value $s\in\cS$ of $\pi$, the obvious attempt to define a smooth structure on $W^{k,p}(\pi,X)$ -- namely by identifying it near $s$ with $W^{k,p}(C_s,X)\times\cU$ after choosing a local $C^\infty$ bundle trivialization of $\pi:\cC\to\cS$ over a neighborhood $\cU$ of $s$ -- fails. This is because the reparametrization action
	\begin{align}
	    \text{Diff}(C_s)\times W^{k,p}(C_s,X)&\to W^{k,p}(C_s,X)\\
	    (\varphi,f)&\mapsto f\circ\varphi
	\end{align}
	is not differentiable in the variable $\varphi$ (see e.g. \cite[Example 2.4]{FFGW-polyfold-survey}) and thus, different local $C^\infty$ bundle trivializations lead to charts for $W^{k,p}(\pi,X)$ which are not smoothly compatible. On the other hand, this discussion shows that it is reasonable to expect that $\fM^\text{reg}_J(\pi,X)$ carries a smooth structure ``relative to $\cS$", i.e., the structure of a space over $\cS$ whose fibres are all smooth manifolds with the smooth structures on the fibres ``varying continuously" with $s\in\cS$. This is the statement we prove in this article, formulated more precisely below.
	\begin{theorem}[see Theorem \ref{rep}]\label{rep-intro}
		Let $\pi:\cC\to\cS$ be a proper flat analytic family of prestable\footnote{A prestable curve $(C,x_1,\ldots,x_m)$ is a connected projective algebraic curve over $\bC$		with at worst ordinary nodal singularities and equipped with a finite (possibly empty) collection of distinct smooth marked points $x_1,\ldots,x_m$. A prestable curve is called stable if its automorphism group $\Aut(C,x_1,\ldots,x_m)$, consisting of biholomorphisms $\varphi:C\to C$ with $\varphi(x_i)=x_i$, is finite.} curves over a complex manifold $\cS$ and let $(X,J)$ be an almost complex manifold. Then, the space $\normalfont\fM_J^\text{reg}(\pi,X)$ of regular $J$-holomorphic maps from fibres of $\pi$ carries the		structure of a relative smooth manifold over $\cS$. This relative smooth structure can be characterized by a universal property and is compatible under pulling back the family of curves along holomorphic maps $\cS'\to\cS$.
	\end{theorem}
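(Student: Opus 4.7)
The plan is to characterize the relative smooth structure on $\fM_J^\text{reg}(\pi,X)$ by a universal property, produce explicit local charts realizing that property via gluing analysis, and then deduce canonicity and pullback compatibility as formal corollaries. I would state the universal property as follows: a map $g:Y\to\fM_J^\text{reg}(\pi,X)$ over $\cS$ from a relative smooth space $Y\to\cS$ is relatively smooth if and only if the tautological ``evaluation'' map $Y\times_\cS\cC\to X$, which is fibrewise $J$-holomorphic and regular, is relatively smooth in the sense dictated by Definition \ref{rel-Cr-mflds}. If such a structure exists, it is clearly unique up to canonical isomorphism, and pullback along a holomorphic map $\cS'\to\cS$ is then automatic from the tautological identification $\fM_J^\text{reg}(\pi\times_\cS\cS',X)=\fM_J^\text{reg}(\pi,X)\times_\cS\cS'$.

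Next I would construct local charts witnessing the universal property at each $(s_0,u_0)\in\fM_J^\text{reg}(\pi,X)$. When $C_{s_0}$ is smooth, a local $C^\infty$ trivialization of $\pi$ over a neighborhood $\cU\ni s_0$ reduces the problem to the standard parametrized implicit function theorem: the Cauchy--Riemann operator becomes a smooth Fredholm section over $W^{k,p}(C_{s_0},X)\times\cU$, and surjectivity of $D(\delbar_J)_{u_0}$ produces a chart modeled on $\ker D(\delbar_J)_{u_0}\times\cU$. When $C_{s_0}$ is nodal, I would combine ordinary pregluing/obstruction-bundle gluing analysis with the HWZ polyfold machinery to produce a candidate relative smooth chart modeled on (gluing parameters)$\times$(linearized kernel)$\times\cU$, exactly as advertised in the abstract.

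The main obstacle is to show that these candidate charts are independent of all the intermediate choices (trivializations, obstruction subspaces, gluing profiles and cutoffs) and jointly satisfy the universal property. The difficulty here is precisely the failure of differentiability of the reparametrization action quoted in the introduction: one cannot simply appeal to a Banach manifold structure on the ambient $W^{k,p}(\pi,X)$ because none exists. The key point I would exploit is that, although the ambient space is badly behaved, the \emph{regular locus} of the zero set of $\delbar_J$ is finite-dimensional, and the universal property pins down its relative smooth structure without reference to any ambient structure. Concretely, this means verifying (a) that the evaluation map associated to any constructed chart is itself relatively smooth, which follows by tracking regularity through the gluing construction in the $\cU$ direction and fibrewise $C^\infty$-smoothness of solutions to elliptic equations in the fibre direction, and (b) that any relatively smooth test map $Y\to\fM_J^\text{reg}(\pi,X)$ (in the sense of its evaluation being relatively smooth) factors as a relatively smooth composition through the chart, which follows from the parametrized implicit function theorem applied to the lift of the test map.

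Once both (a) and (b) are established, independence of choices and canonicity are automatic: any two candidate charts at $(s_0,u_0)$ each satisfy the universal property, so the identity map between them is relatively smooth in both directions. The pullback statement then follows formally from the universal property, since base-changing the family $\pi$ base-changes the evaluation condition. I expect (b) together with the joint-regularity statement in (a) near nodal strata to be the main technical effort, as they require carefully combining the continuity-in-$\cU$ output of ordinary gluing with the smoothness-in-fibre-direction output of polyfold theory in a way that matches the definition of relative smoothness.
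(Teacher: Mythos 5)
Your overall architecture is the same as the paper's: a functor-of-points universal property (the paper's Definition \ref{def-mod-functor}), local charts at a nodal map built from gluing parameters times the linearized kernel via the polyfold sc-implicit function theorem, smoothness of the chart's evaluation map, and then uniqueness and pullback compatibility as formal consequences (the paper packages the gluing of local charts through the representability criterion, Proposition \ref{rep-crit}). Steps (a) and the formal parts of your plan are essentially the paper's Theorem \ref{smooth-chart-map}, Lemma \ref{bc} and Lemma \ref{prop-rep}. The gap is in your step (b). You assert that a rel--$C^\infty$ test family $Y/T\to\fM$ factors rel--$C^\infty$-ly through the chart ``by the parametrized implicit function theorem applied to the lift of the test map,'' but there is no ambient object to which such an IFT applies: the test data consists only of a continuous map $T\to\cS$ and a fibrewise holomorphic evaluation $\cC_Y\to X$ which is rel--$C^\infty$ over $T$, and --- as you yourself note in quoting the failure of differentiability of reparametrization --- there is no Banach manifold of maps from the varying fibres into which this family can be ``lifted'' with any known regularity. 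Producing the lift $y\mapsto(\alpha,v,\xi_y)\in G\times V\times H^{k,\delta}$ and proving it rel--$C^\infty$ is exactly the statement to be established, so the argument as written is circular; moreover the dependence on the gluing parameter is only sc-smooth, not classically smooth, so a classical parametrized IFT cannot be invoked even if a lift were available.

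The paper closes this gap with a specific device you do not have: the sc-complement of $K=\ker D_0$ is chosen to be $\ker L$, where $L$ is built from evaluations at finitely many points $x_1,\ldots,x_\ell$ lying in the part $C'$ of the curve away from the nodes, together with a quotient of $E$ (Lemma \ref{complement-props}, following \cite[\textsection B.7]{Pardon-VFC}). With this choice, the inverse chart transition in Theorem \ref{loc-rep} is written explicitly as $\Psi(y)=L^{-1}\bigl(w(y)-v_0\ (\mathrm{mod}\ E'),\ \exp_{u(x_1)}^{-1}F(x_1(y))\ (\mathrm{mod}\ V_1),\ldots\bigr)$, i.e., it is a fixed linear isomorphism applied to finitely many point evaluations of the test family's evaluation map along the rel--$C^\infty$ sections $y\mapsto x_i(y)$; rel--$C^\infty$-ness of the factorization is then immediate from the hypotheses on the test family, with no implicit function theorem applied to it at all. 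Some analogue of this finite-dimensional ``evaluation at marked points'' normalization (or another mechanism that expresses the $K$-coordinate of a solution through quantities that are manifestly rel--$C^\infty$ in the test data) is indispensable, and without it your step (b) --- which you correctly identify as the main technical effort --- does not go through. A secondary, minor point: to run the gluing of local charts and the pullback statement one should formulate the universal property against arbitrary rel--$C^\infty$ manifolds $Y/T$ equipped with a continuous map $T\to\cS$, not only against rel--$C^\infty$ spaces over $\cS$ itself, as in the paper's Definition \ref{def-mod-functor}.
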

	\begin{remark}
	    In fact, Theorem \ref{rep-intro} as stated above implies, purely formally, the corresponding statement for $\cS$ a smooth Deligne--Mumford (or Artin) stack. Namely, given the smooth Deligne--Mumford (or Artin) stack $\cS$, choose an \'etale (or smooth) atlas $U\to\cS$, apply Theorem \ref{rep-intro} to $U$. Note that the two pullback structures on $U\times_{\cS}U$ agree by the naturality statement in Theorem \ref{rep-intro}, hence this descends to the stack $\cS$ -- also note that the same argument shows independence of the choice of the atlas $U$. Of course, there is a `universal' choice of $\cS$, namely the smooth Artin stack $\fM^\text{pre}$ of all prestable curves. Everything is obtained via pullback from it and so, Theorem \ref{rep-intro} could be equivalently stated by saying that we have a relative smooth structure over this universal base. In other words, if $\fM_J(X)\to\fM^\text{pre}$ is the topological stack of $J$-holomorphic maps from prestable curves and $\fM_J^\text{reg}(X)\subset\fM_J(X)$ is the open substack of regular maps, then Theorem \ref{rep-intro} asserts the existence of a relative smooth structure on $\fM_J^\text{reg}(X)\to\fM^\text{pre}$.
	\end{remark}
	\begin{remark}
		The actual result we prove (namely, Theorem \ref{rep}) is somewhat more general as it allows for inhomogeneous perturbation terms in the Cauchy-Riemann equation. In this case also, the relative smooth structure we construct is compatible under pullbacks of families of curves and inhomogeneous terms.
	\end{remark}
	\begin{remark}
		Since the relative smooth structure we construct can be characterized by a simple universal property (see		Definition \ref{def-mod-functor}), this structure is the ``correct" one. Thus, even though we prove the existence of this structure using techniques from polyfold theory -- which involves making choices of a gluing profile and a specific functional analytic setup involving weighted Sobolev spaces -- the result is still \emph{independent of all choices}. We would like to emphasize that this is quite different from the usual way in which moduli spaces are constructed in symplectic geometry -- the construction \emph{a priori} depends strongly on the specific framework used (polyfolds, Kuranishi structures etc) and thus, a lot of effort is required to prove that the local charts thus constructed are smoothly compatible. We, on the other hand, start with an easy-to-state universal property which characterizes the relative smooth structure uniquely \emph{provided that it exists}. The burden is thus shifted to a proof that such a structure exists, and this is tackled using techniques familiar from holomorphic curve theory. In this sense, our ``functor of points" approach to moduli spaces closely resembles the one adopted in algebraic geometry.
	\end{remark}
	\begin{remark}[Non-transverse Case] In general, when the moduli space of holomorphic curves is not transversely cut-out, the question of constructing smooth structures on it doesn't make sense as stated. However, we may instead like to construct smooth Kuranishi structures or atlases on these moduli spaces (in the sense of \cite{FOOO1}, \cite{FOOO2} or \cite{MW}). Since our main result (Theorem \ref{rep}) deals with the perturbed Cauchy--Riemann equation \eqref{inhom-pde}, it can be used (in conjunction with \cite[Lemma 9.2.9]{Pardon-VFC}) to construct relative smooth Kuranishi structures/atlases (or implicit atlases, in the terminology of \cite{Pardon-VFC}) on non-regular moduli spaces as well.
	\end{remark}
	\subsection{Guide to the Article}
	In \textsection\ref{rel}, we define the category of relative smooth manifolds and morphisms between them. We also include a brief discussion of representable (set-valued) functors defined on this category for the convenience of the reader.
	\\\\
	In \textsection\ref{mod-problem}, we introduce the moduli functors of interest to us. This gives a categorical way of stating the universal property we wish our construction of relative smooth structures to satisfy. We then discuss the underlying topology of the moduli problem. 
	\\\\
	In \textsection\ref{sc-ift}, which is the heart of the article, we prove the main existence	result (Theorem \ref{rep}). The proof makes use of techniques from classical gluing analysis for holomorphic curves (e.g. as described in \cite[Appendix B]{Pardon-VFC}) and some fundamental results of polyfold theory (specifically, from \cite{HWZ-GWbook} and \cite{HWZ-ImpFuncThms}). In this section, we assume the reader has a working familiarity with the basics of polyfold theory	(e.g. at the level of \cite[\textsection4--\textsection6]{FFGW-polyfold-survey}). Once the proof of existence is completed, we do not need to revisit this construction as only its universal property is relevant for the rest of the article.
	\\\\
	In \textsection\ref{mod-with-ob}, we extend the result of the previous section to treat another common setup that occurs in holomorphic curve theory -- namely the construction	of certain types of obstruction bundles on moduli spaces. The main result of this section (Theorem \ref{obs-thm}) constructs natural rel--$C^\infty$ structures on certain obstruction bundles on moduli spaces of holomorphic curves.
	\\\\
	In \textsection\ref{stable-mod}, we show that stable maps are open in the moduli spaces we have constructed in earlier sections. As a byproduct of the technique used to prove this,	we also describe local \'etale charts for the stack of stable maps (including when it is not cut-out transversely).
	\\\\
	Finally, Appendix \ref{wk-appx} collects some technical results used in \textsection\ref{sc-ift}.
	\subsection{Acknowledgements}
	I thank my advisor John Pardon for many useful conversations and comments on earlier versions of this article. I thank Michael Jemison for answering many of my basic questions on polyfold theory and explaining the gluing theorem for holomorphic curves to me in this context. I thank Helmut Hofer for useful discussions and encouragement. I am also grateful for the useful conversations I had with Shaoyun Bai, Nate Bottman, Yash Deshmukh, Thomas Massoni, Rohil Prasad and Zhengyi Zhou. 
	\section{Relative $C^r$ Manifolds}\label{rel}
	\subsection{Basic Definitions}
	\noindent We start by defining a class of geometric spaces which should be viewed as a ``relative version" of the class of differentiable manifolds (of class $C^r$ for some integer $r\ge 0$ or $r=\infty$).
	\begin{definition}[Relative Spaces and Morphisms]
		\begin{enumerate}[(1)]
			\item Let $S$ be a topological space. A \textbf{space relative to $S$} (also referred to as an			\textbf{$S$-space}) is the data of a topological space $X$ equipped with a continuous map $f:X\to S$,			called the \textbf{structure morphism}. 			When $f$ is clear from context or unimportant, we will abbreviate $(X,f)$ to $X/S$ or simply $X$. An $S$-space $(X,f)$ is called \textbf{separated} if the diagonal map
			\begin{align}
				X\hookrightarrow X\times_SX
			\end{align}
			is a closed embedding.
			\item A \textbf{map of $S$-spaces} (also referred to as an \textbf{$S$-morphism}) from $(X,f)$ to $(X',f')$	is a continuous map $g:X\to X'$ such that the following diagram commutes.
			\begin{center}
			\begin{tikzcd}
				X \arrow[rr,"g"] \arrow[dr, "f"]& &X' \arrow[dl, "f'"]\\
				& S &
			\end{tikzcd}
			\end{center}
			\item Given another topological space $S'$, a continuous map $\varphi:S'\to S$ and an $S$-space $(X,f)$, we define the \textbf{pullback} $\varphi^*(X,f)$ to be the $S'$-space $(X',f')$ given by the following fibre product diagram.
			\begin{center}
			\begin{tikzcd}
				X' \arrow[r, "\tilde\varphi"] \arrow[d,"f'"] & X \arrow[d, "f"] \\
				S' \arrow[r,"\varphi"] & S
			\end{tikzcd}
			\end{center}
			\item Given an $S$-space $(X,f)$ and an $S'$-space $(X',f')$, a \textbf{relative morphism} from $(X,f)$ to $(X',f')$ is a pair $(\tilde\varphi:X'\to X,\varphi:S'\to S)$ of continuous maps such that the square above is commutative (but not necessarily a fibre square).
			\item Given a topological space $Y$, the \textbf{trivial $S$-space with fibre $Y$} is defined to be $Y_S:=(Y\times S,\pi_S)$, where $\pi_S:Y\times S\to S$ is the second coordinate projection.
		\end{enumerate}
	\end{definition}
	\begin{definition}[Rel--$C^r$ Maps]
		Consider a relative morphism $\Phi=(\tilde\varphi,\varphi):\cU\to V'_{S'}$, where $\cU$ is an open $S$-subspace of the $V_S$, with $V,V'$ being finite dimensional $\bR$-vector spaces. We then define the notion of $\Phi$ being of class \textbf{rel--$C^r$} for $r\ge 0$ by induction on $r$. For $r=0$, we require no		additional conditions on $\Phi$ (i.e., $\Phi$ is always of class rel--$C^0$). For $r\ge 1$, we define $\Phi$ to be of class rel--$C^r$ if and only if the following conditions hold.
		\begin{itemize}
			\item Writing $\tilde\varphi(x,s) = (F(x,s),\varphi(s))$, the map $x\mapsto F(x,s)$ is differentiable (as a map between open subsets of vector spaces) for each point $s\in S$.
			\item The induced tangent map $T\Phi=(T\tilde\varphi,\varphi):(T_U)_{S}|_\cU\to (T_{V'})_{S'}$ given by
			\begin{align}
				T\tilde\varphi:(x,v,s)\mapsto(F(x,s),\nabla_v F(x,s),\varphi(s))
			\end{align} 
			is of class rel--$C^{r-1}$ (where $T_M$ denotes the tangent bundle of a manifold $M$).
		\end{itemize}
		We say $\Phi$ is \textbf{relatively smooth} (or of class \textbf{rel--$C^\infty$}) if and only if $\Phi$ is of class	rel--$C^r$ for all integers $r\ge 1$.
	\end{definition}
	\begin{lemma}\label{rel-chain}
		For integers $r\ge 0$ (as well as $r=\infty$), a composition of rel--$C^r$ maps is again rel--$C^r$.
	\end{lemma}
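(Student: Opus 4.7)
The plan is to proceed by induction on $r$, with the base case $r = 0$ trivial: a composition of $S$-morphisms is continuous, and no further condition is imposed on rel--$C^0$ maps. For the inductive step, I would fix rel--$C^r$ relative morphisms $\Phi = (\tilde\varphi, \varphi) : \cU \to V'_{S'}$ and $\Psi = (\tilde\psi, \psi) : \cU' \to V''_{S''}$, where $\cU \subseteq V_S$ and $\cU' \subseteq V'_{S'}$ are open. Replacing $\cU$ with $\tilde\varphi^{-1}(\cU')$, which is again open, I may assume the composite $\Psi \circ \Phi$ is defined throughout $\cU$ as a relative morphism to $V''_{S''}$ over the base map $\psi \circ \varphi$. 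Writing $\tilde\varphi(x,s) = (F(x,s), \varphi(s))$ and $\tilde\psi(y, s') = (G(y, s'), \psi(s'))$, the fibrewise map of $\Psi \circ \Phi$ is $H(x,s) := G(F(x,s), \varphi(s))$.

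The first bullet of the definition of rel--$C^r$ -- fibrewise differentiability of $H(\cdot, s)$ -- I would verify by fixing $s \in S$ and invoking the ordinary chain rule for maps between open subsets of finite-dimensional $\bR$-vector spaces applied to the differentiable maps $F(\cdot, s)$ and $G(\cdot, \varphi(s))$. This yields
\begin{align}
\nabla_v H(x,s) = \nabla_{\nabla_v F(x,s)}\, G(F(x,s), \varphi(s)).
\end{align}
Comparing this formula against the definition of the tangent map, one sees that $T(\Psi \circ \Phi)$ coincides, as a relative morphism from $(T_V)_S|_\cU$ to $(T_{V''})_{S''}$, with the composite $T\Psi \circ T\Phi$. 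Since $T\Phi$ and $T\Psi$ are rel--$C^{r-1}$ by the hypothesis on $\Phi$ and $\Psi$, the inductive hypothesis of the very lemma being proved supplies that $T\Psi \circ T\Phi$ is rel--$C^{r-1}$, which is precisely the second bullet of the definition of rel--$C^r$ applied to $\Psi \circ \Phi$. The case $r = \infty$ then follows formally, since rel--$C^\infty$ was defined as rel--$C^r$ for every integer $r \geq 1$.

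I do not anticipate any substantive obstacle. The only mild bookkeeping is to check that the chain-rule identity holds at the level of \emph{relative} morphisms rather than merely fibrewise -- that is, that the base maps on $S$ match up -- but this is immediate from the fact that $\psi \circ \varphi$ appears on both sides. Everything else reduces to the classical chain rule between open subsets of vector spaces together with the inductive hypothesis.
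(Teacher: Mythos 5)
Your proposal is correct and fleshes out exactly the argument the paper summarizes as ``obvious by the ordinary chain rule and induction on $r$'': the base case $r=0$ is trivial, and for the inductive step the classical chain rule shows that $T(\Psi\circ\Phi)=T\Psi\circ T\Phi$ as relative morphisms, after which the inductive hypothesis applied to the rel--$C^{r-1}$ maps $T\Phi$ and $T\Psi$ closes the induction.
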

	\begin{proof}
		Obvious by the ordinary chain rule and induction on $r$.
	\end{proof}
	\begin{definition}[Rel--$C^r$ Compatibility of Charts]
		Let $S$ be a topological space, $n\ge 0$ be an integer and $(X,f)$ be an $S$-space. An \textbf{$S$-chart (of relative dimension $n$) for $X$} is a pair $(U,\varphi)$ consisting of an open subset $U\subset X$ along with an $S$-morphism $\varphi:U\to\bR^n_S$ which is an homeomorphism onto an open subset of $\bR^n_S$. Now, given two $S$-charts $(U,\varphi)$ and $(V,\psi)$ of relative dimension $n$ and $0\le r\le\infty$, the two charts are said to be \textbf{rel--$C^r$ compatible} with each other if the both map
		\begin{align}
			\psi\circ\varphi^{-1}:\varphi(U\cap V)\to\psi(U\cap V)
		\end{align}
		and its inverse are of class rel--$C^r$. A \textbf{rel--$C^r$ atlas} for $X/S$ is a collection $\cA$ of charts $\{(U_\alpha,\varphi_\alpha)\}_{\alpha}$ such that $\{U_\alpha\}_\alpha$ is an open cover of $X$ and		any two charts in $\cA$ are rel--$C^r$ compatible. Elements of a rel--$C^r$ atlas are called rel--$C^r$ charts.
	\end{definition}
	\begin{definition}[Rel--$C^r$ Manifolds and Maps]\label{rel-Cr-mflds}
		Given a topological space $S$, an integer $n\ge 0$, $0\le r\le\infty$ and an $S$-space $X$, a \textbf{rel--$C^r$ manifold} structure $\cA$ on $X/S$ (of relative dimension $n\ge 0$) is a maximal rel--$C^r$ atlas on $X$ (all of whose $S$-charts are of relative dimension $n$). We say that the rel--$C^r$ manifold $X/S$ is \textbf{separated}		if the structure map $X\to S$ is separated. We also include the case where $X$ is the empty set (in this case, the relative dimension over $S$ is not well-defined). The notion of rel--$C^k$ maps between rel--$C^r$ manifolds for $0\le k\le r$ is defined in the obvious fashion using atlases.
	\end{definition}
	\begin{remark}\label{rel-over-pt}
	    We include ordinary $C^r$ manifolds $X$ into rel--$C^r$ manifolds by taking $S=\text{pt}$.
	\end{remark}
	\begin{definition}[Base Change]\label{rel-bc}
		Let $X/S$ be a rel--$C^r$ manifold and let $T\to S$ be a continuous map of topological spaces. Then, the space $X_T:=X\times_ST$ carries the natural structure of a rel--$C^r$ manifold over $T$. This is evident when $X/S = V_S$  for some vector space $V$ (or an open subset thereof), in which case $X_T/T$ is canonically identified with $V_T$ (respectively, an open subset thereof). For general $X/S$, we define the rel--$C^r$ structure on $X_T/T$ by gluing these local models. We call		$X_T/T$ the \textbf{base change of $X/S$ along $T\to S$}.
	\end{definition}
	\begin{definition}
		Let $0\le r\le \infty$. We define the \textbf{category of rel--$C^r$ manifolds}, denoted by $(C^r/\cdot)$, to be the category whose objects are rel--$C^r$ manifolds and whose morphisms are rel--$C^r$ maps between them. Given any		topological space $S$, the subcategory of $(C^r/\cdot)$ consisting of rel--$C^r$ manifolds which are $S$-spaces and rel--$C^r$ $S$-morphisms between them is called the \textbf{category of rel--$C^r$ $S$-manifolds} and denoted by $(C^r/S)$. Note that this is not a full subcategory.
	\end{definition}
	\subsection{Categorical Properties}
	We note here some basic properties of the category $(C^r/\cdot)$ which will be useful later. We begin by discussing some generalities on categories and then specialize to the case at hand.
	\subsubsection{Adjoints, Presheaves and Yoneda Functors}
	\label{abstract-nonsense}
	For a category $\cA$, we let $\PShv(\cA)$ denote the category of (set-valued) presheaves on $\cA$, i.e., the category of contravariant functors from $\cA$ to the category $(\text{Sets})$. Let $h:\cA\hookrightarrow\PShv(\cA)$ be the Yoneda embedding which maps an object $X$ of $\cA$ to the object $h_X:=\Hom_\cA(-,X)$ of $\PShv(\cA)$. The essential image of $h$ is the subcategory of \textbf{representable} presheaves.
	\begin{remark}\label{yoneda-lemma}
		We will adhere to the convention of identifying an object $X$ with the associated Yoneda presheaf $h_X$. In accordance with the Yoneda Lemma \cite[\href{https://stacks.math.columbia.edu/tag/001P}{Tag 001P}]{stacks-project}, for a presheaf $\fX$, we will identify elements of $\fX(X)$ with morphisms $X\to\fX$ of presheaves, under the natural bijection
		\begin{align}\label{yoneda-correspondence}
		    \text{Mor}_{\PShv(\cA)}(X,\fX)&\xrightarrow{\simeq}\fX(X)\\
		    \Phi&\mapsto\Phi_X(\text{id}_X).
		\end{align}
		In practice, when defining a presheaf $\fF$ on $\cA$, we will specify what ``morphisms from $A$ to $\fF$" are for every object $A$ in $\cA$ (in other words, we will specify the sets $\fF(A)$) and how to ``pull back a morphism $A\to\fF$ to a morphism $A'\to\fF$" for a given morphism $A'\to A$ in $\cA$ (in other words, the functorial set maps $\fF(A)\to\fF(A')$).
	\end{remark}
	\begin{definition}
	    Suppose $F_1,F_2:\cA\to\cB$ are two functors (where $\cA$ and $\cB$ are categories) and $\eta:F_1\Rightarrow F_2$ is a natural transformation. Assume that $F_1,F_2$ admit left adjoints $F_1',F_2'$, i.e., we have natural bijections
	    \begin{align}
	        \psi_i(A,B):\Hom_\cA(F_i'(B),A)\xrightarrow{\simeq}
	        \Hom_\cB(B,F_i(A))
	    \end{align}
	    for $i=1,2$ and all $A\in\cA$ and $B\in\cB$.
	    Let $\eta':F_2'\Rightarrow F_1'$ be the induced natural transformation. For $i=1,2$, these induce pullback functors $F_i'':\PShv(\cA)\to\PShv(\cB)$ and a natural transformation
	    \begin{align}
	        \eta'':F_1''\Rightarrow F_2''.
	    \end{align}
		In view of the next lemma, we say that $\eta'':F_1''\Rightarrow F_2''$ is the \textbf{extension of $\eta:F_1\Rightarrow F_2$ to presheaves}.
	\end{definition}
	\begin{lemma}\label{morphism-of-PShv}
	    In the situation above, $\eta:F_1\Rightarrow F_2$ and $\eta'':F_1''\Rightarrow F_2''$ are naturally compatible under the Yoneda embeddings $\cA\hookrightarrow\PShv(\cA)$ and $\cB\hookrightarrow\PShv(\cB)$. In particular, given any object $A\in\cA$, we have the natural isomorphisms
	    \begin{align}
	        \alpha_i(A):=\psi_i^{-1}(A,-):h_{F_i(A)}\Rightarrow h_A\circ F_i'=:F_i''(h_A)
	    \end{align}
	    for $i=1,2$ in $\PShv(\cB)$. Furthermore, these fit into the following commutative diagram.
		\begin{center}
			\begin{tikzcd}
				h_{F_1(A)} \arrow[r, "h_{\eta(A)}"] \arrow[d, "\alpha_1(A)"] & 
				h_{F_2(A)} \arrow[d, "\alpha_2(A)"] \\
				F_1''(h_A) \arrow[r, "\eta''(h_A)"] & F_2''(h_A)
			\end{tikzcd}
		\end{center}	
	\end{lemma}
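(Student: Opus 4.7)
The proof is essentially an unravelling of definitions, exploiting the fact that $\eta':F_2'\Rightarrow F_1'$ is defined precisely so that such a compatibility holds. Here is the plan.

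First I would fix $A\in\cA$, $B\in\cB$ and work out what all four maps in the square do on $B$-sections. By definition of the Yoneda embedding, $h_{F_i(A)}(B)=\Hom_\cB(B,F_i(A))$ and $h_{\eta(A)}$ at $B$ is postcomposition $\eta(A)_*$. By definition of the pullback functor $F_i''$ on presheaves, $F_i''(h_A)(B)=h_A(F_i'(B))=\Hom_\cA(F_i'(B),A)$, and $\eta''(h_A)$ at $B$ is precomposition $\eta'(B)^*$. Finally $\alpha_i(A)$ at $B$ is the inverse adjunction isomorphism $\psi_i(A,B)^{-1}\colon\Hom_\cB(B,F_i(A))\to\Hom_\cA(F_i'(B),A)$.

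Next, I would verify that $\alpha_i(A)$ really is a natural isomorphism of presheaves on $\cB$. Naturality of $\psi_i(A,B)$ in $B$ (part of the data of the adjunction $F_i'\dashv F_i$) says exactly that, for any morphism $B'\to B$ in $\cB$, the induced pullback squares commute; this is the statement that $\psi_i(A,-)^{-1}\colon h_{F_i(A)}\Rightarrow F_i''(h_A)$ is a morphism of presheaves. Bijectivity at each $B$ gives that it is an isomorphism.

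The heart of the argument is commutativity of the square. Recall that the natural transformation $\eta'\colon F_2'\Rightarrow F_1'$ induced by $\eta$ under the adjunctions is characterized (and may be defined) by the following property: for every $A\in\cA$, $B\in\cB$ and $g\in\Hom_\cB(B,F_1(A))$,
\begin{align}
\psi_2(A,B)^{-1}(\eta(A)\circ g)=\psi_1(A,B)^{-1}(g)\circ\eta'(B).
\end{align}
Reading the left-hand side as ``go right then down'' in the square and the right-hand side as ``go down then right,'' this is exactly the desired commutativity at the level of $B$-sections, for every $B$. Running over $B\in\cB$ and using the naturality already established, the whole presheaf-level square commutes.

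The main (and only) obstacle is bookkeeping the direction of arrows: $\eta$ goes $F_1\Rightarrow F_2$ while $\eta'$ goes $F_2'\Rightarrow F_1'$, and passing to presheaves reverses direction again, so that $\eta''$ goes $F_1''\Rightarrow F_2''$. Once the definition of $\eta'$ as the mate of $\eta$ is invoked, there is nothing more to prove; the lemma is really a Yoneda-type restatement of the classical fact that natural transformations between right adjoints correspond to natural transformations between their left adjoints in the opposite direction.
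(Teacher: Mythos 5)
Your proof is correct, and it is a careful unravelling of exactly what the paper means by its one-word proof, ``Direct verification'': you identify each side of the square on $B$-sections, observe that naturality of $\psi_i$ in $B$ makes $\alpha_i(A)$ a morphism of presheaves, and then note that commutativity of the square is precisely the mate relation $\psi_2(A,B)^{-1}(\eta(A)\circ g)=\psi_1(A,B)^{-1}(g)\circ\eta'(B)$ characterizing $\eta'$. Nothing is missing, and the only subtlety (the double reversal of direction when passing from $\eta$ to $\eta'$ to $\eta''$) is handled correctly.
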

	\begin{proof}
		{Direct verification.}
	\end{proof}
	\subsubsection{Presheaves and Sheaves on $(C^r/\cdot)$}\label{PShv-rel-Cr}
	We now specialize the discussion of \textsection\ref{abstract-nonsense} to the category $(C^r/\cdot)$. Consider the two ``forgetful" functors $t,b:(C^r/\cdot)\to (\text{Top})$ given on objects by
	\begin{align}
	    t : X/S &\mapsto X\\
	    b : X/S &\mapsto S
	\end{align}
	with the obvious action on morphisms. We have a natural transformation $s:t\Rightarrow b$ defined as follows. For any	$X/S$ in $(C^r/\cdot)$, the map $s(X/S)$ is the structure morphism $X\to S$. These admit left adjoints $t',b':(\text{Top})\to(C^r/\cdot)$ given on objects by
	\begin{align}
	    t' : S &\mapsto S/S\\
	    b' : S &\mapsto \emptyset/S
	\end{align}
	with the obvious action on morphisms. Here, $S/S$ denotes the trivial $S$-space with fibre a point, while $\emptyset/S$ denotes the trivial $S$-space with empty fibre. The adjunction isomorphism is obvious for both pairs $(t,t')$ and $(b,b')$. The induced natural transformation $s':b'\Rightarrow t'$ is also the obvious one.
	\begin{remark}\label{top-base}
	Suppose $\fX$ is a presheaf on the category $(C^r/\cdot)$.	Then, we have the presheaves $\fX^\text{top}:=\fX\circ t' = t''(\fX)$ and $\fX_\text{base}:=\fX\circ b' = b''(\fX)$ on	$(\text{Top})$ and the induced morphism $s''_\fX:\fX^\text{top}\Rightarrow\fX_\text{base}$ of presheaves. Lemma \ref{morphism-of-PShv} says that if $\fX = h_{X/S}$ is a Yoneda presheaf, then $s''_\fX$ corresponds	(under the Yoneda embedding) the structure map $X\to S$ of the topological spaces underlying the rel--$C^r$ manifold $X/S$. The construction of $\fX^\text{top}$, $\fX_\text{base}$ and	$s''_\fX$ is functorial and natural in $\fX$ (i.e., respects morphisms $\fX\to\fY$ of presheaves). Explicitly, any morphism $f:\fX\to\fY$ of presheaves on $(C^r/\cdot)$, yields the following commutative diagram
		\begin{center}
			\begin{tikzcd}
				\fX^\text{top} \arrow[d, "s''_\fX"] \arrow[r,"f^\text{top}"] & \fY^\text{top} \arrow[d,"s''_\fY"] \\
				\fX_\text{base} \arrow[r, "f_\text{base}"] & \fY_\text{base}
			\end{tikzcd}
		\end{center}
		of presheaves on $(\text{Top})$. We say that $f$ is a \textbf{rel--$C^r$ lift} of the continuous morphism $(f^\text{top},f_\text{base})$.
	\end{remark}
	\begin{definition}[Sheaves on $(C^r/\cdot)$]\label{sheaf-def}
		Suppose $\fX$ is a presheaf on $(C^r/\cdot)$. We then say that $\fX$ is \textbf{sheaf} if given any rel--$C^r$ manifold $p:X\to S$ and open covers $\{X_i\}_{i\in I}$ and $\{S_i\}_{i\in I}$ of $X$ and $S$ respectively, satisfying $p(X_i)\subset S_i$ the following two assertions are true.
		\begin{enumerate}[(1)]
			\item If $f,g\in\fX(X/S)$ are such that $f|_{X_i/S_i} = g|_{X_i/S_i}$ for all $i\in I$, then $f=g$.
			\item If $f_i\in\fX(X_i/S_i)$ for $i\in I$ are such that $f_i|_{X_{ij}/S_{ij}} = f_j|_{X_{ij}/S_{ij}}$ for all $i,j\in I$, then there exists $f\in\fX(X/S)$ such that we have $f|_{X_i/S_i} = f_i$ for all $i\in I$. Here, we set $X_{ij} := X_i\cap X_j$ and $S_{ij} := S_i\cap S_j$ for all $i,j\in I$
		\end{enumerate}
		Representable presheaves are clearly sheaves on $(C^r/\cdot)$. Notice that sheaves on $(\text{Top})$ can be defined entirely analogously and that $\fX^\text{top}$ and $\fX_\text{base}$ are sheaves if $\fX$ is a sheaf.
	\end{definition}
	\subsubsection{How to Represent a Sheaf on $(C^r/\cdot)$}\label{how-to-rep}
	\begin{definition}
	    Let $\fX$ be a sheaf on $(C^r/\cdot)$. 
	    \begin{enumerate}
	        \item(Open Sub-sheaf) A sub-sheaf $\fX'\subset\fX$ is called \textbf{open} if, for all rel--$C^r$ manifolds $X/S$ and an element $\varphi\in\fX(X/S)$, the fibre product $h_{X/S}\times_\fX\fX'$ (where the maps are given by $\varphi$ and the inclusion $\fX'\subset\fX$) is represented by a rel--$C^r$ manifold $U/T$ with the projection map 
	        \begin{align}
	            h_{U/T}\simeq h_{X/S}\times_\fX\fX'\to h_{X/S}
	        \end{align}
	        given by a morphism $U/T\to X/S$ consisting of open embeddings $U\to X$ and $T\to S$. (Note that we can also define an obvious notion of open sub-sheaf for sheaves on $(\text{Top})$ entirely analogously.)
	        \item (Base Change) Suppose $\Phi:\fS\to \fX_\text{base}$ is a morphism of sheaves on $(\text{Top})$. We then define the \textbf{base change of $\fX$ (along $\Phi$)} to be the sheaf $\fX_{\fS,\Phi}$ (or simply $\fX_{\fS}$ if $\Phi$ is clear from context) given by
	        \begin{align}
	            \fX_{\fS,\Phi}:=(\fS\circ b)\times_{(\fX_\text{base}\circ b)}\fX
	        \end{align}
	        where the maps used in the fibre product are $\Phi\circ b$ and $\fX\circ c$, where $c:b'\circ b\Rightarrow 1$ is the co-unit of the adjunction between $b'$ and $b$. The natural projection $\fX_{\fS,\Phi}\to\fS\circ b$ induces a natural isomorphism $(\fX_{\fS,\Phi})_\text{base}\xrightarrow{\simeq}\fS$. Moreover, the description of $\fX_{\fS,\Phi}$ as a fibre product yields a natural isomorphism
	        \begin{align}
	            (\fX_{\fS,\Phi})^\text{top}\xrightarrow{\simeq}
	            \fS\times_{\fX_\text{base}}\fX^\text{top}
	        \end{align}
	        with the projection $(\fX_{\fS,\Phi})^\text{top}\to\fS$ identified with the natural map $s''_{\fX_{\fS,\Phi}}$. In the case when $\Phi$ is the inclusion of an open sub-sheaf (or an open subset, when $\fS$ is representable), we denote the base change simply by $\fX|_\fS$. In this case, $\fX|_\fS\to\fX$ is the inclusion of an open sub-sheaf.
	        \item (Source Change) Suppose $\Psi:\fU\to\fX^\text{top}$ is the inclusion of an open sub-sheaf. We then define the \textbf{source change of $\fX$ (along $\Psi$)} to be the sub-sheaf $\fX^{\fU,\Psi}$ (or simply $\fX^\fU$) of $\fX$    given by setting $\fX^{\fU,\Psi}(Y/T)$ to be subset of $\fX(Y/T)$ consisting of elements $\chi$ such that the image $\chi^\text{top}\in \fX^\text{top}(Y)$ of $\chi$ can be lifted to $\fU(Y)$ along $\Psi(Y)$. This description gives a natural isomorphism
	        \begin{align}
	            (\fX^{\fU,\Psi})^\text{top}\xrightarrow{\simeq}\fU
	        \end{align}
	        which is compatible with $\Psi$ and the inclusion $\fX^{\fU,\Psi}\subset\fX$. The inclusion induces a natural isomorphism
	        \begin{align}
	            (\fX^{\fU,\Psi})_\text{base}\xrightarrow{\simeq}\fX_\text{base}.
	        \end{align}
	        Under these isomorphisms, $s''_\fX\circ\Psi$ is identified with $s''_{\fX^{\fU,\Psi}}$. Further, the sub-sheaf $\fX^{\fU,\Psi}\subset\fX$ is open.
	    \end{enumerate}
	\end{definition}
	\noindent
	The next lemma shows (a) the compatibility of the above definitions with representable functors and (b) the essentially local nature of representability for sheaves.
	\begin{lemma}[Properties of Representability]\label{prop-rep}
	    Representable sheaves have the following properties.
	    \begin{enumerate}[\normalfont (a)]
	        \item Let $X/S$ be an object of $(C^r/\cdot)$.
	        \begin{enumerate}[\normalfont(1)]
	            \item For any object $Y/T$ of $(C^r/\cdot)$, a morphism $h_{Y/T}\to h_{X/S}$ is equivalent to the inclusion of an open sub-sheaf if and only if the corresponding continuous maps $Y\to X$ and $T\to S$ are open embeddings.
	            \item Suppose $T\to S$ is a map of topological spaces. Then, the base change of $h_{X/S}$ under the corresponding map $h_T\to h_S$ is naturally isomorphic to $h_{X_T/T}$, where $X_T/T$ is the base change of $X/S$ along $T\to S$ in the sense of Definition \ref{rel-bc}.
	            \item Suppose $U\subset X$ is an open subset. Then, the source change $(h_{X/S})^{h_U}$ and the sheaf $h_{U/S}$ are equal as sub-sheaves of the sheaf $h_{X/S}$.
	        \end{enumerate}
	        \item Let $\fX$ be a sheaf on $(C^r/\cdot)$. Representability of $\fX$ is stable under base and source changes, and moreover, is local on $\fX$. More precisely, we have the following properties.
	        \begin{enumerate}[\normalfont(1)]
	            \item Suppose that $\{\fX_\alpha\to\fX\}_{\alpha\in I}$ is a cover by open sub-sheaves, i.e., for every $\varphi\in\fX(X/S)$, the collection $\{h_{X/S}\times_{\fX}\fX_\alpha\to h_{X/S}\}_{\alpha\in I}$ is represented by an open cover $\{X_\alpha/S_\alpha\}_{\alpha\in I}$ of $X/S$. In this case, $\fX$ is representable if and only if each $\fX_\alpha$ is representable.
	            \item If $\fX$ is representable and $\Phi:T\to\normalfont\fX_\text{base}$ is a morphism of topological spaces, then the base change $\fX_{T,\Phi}$ is also representable. More generally, if $\normalfont\fU\subset\fX^\text{top}$	is an open sub-sheaf and $\fX^\fU$ is representable, then so is the base change $(\fX^\fU)_{T,\Phi} = (\fX_{T,\Phi})^{\fU_T}$, where $\normalfont\fU_T\subset(\fX_{T,\Phi})^\text{top}$	is the open sub-sheaf obtained by pulling back $\fU$ under $\Phi$.
	        \end{enumerate}
	    \end{enumerate}
	\end{lemma}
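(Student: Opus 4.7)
The strategy is to establish the concrete representable cases in (a) by direct unwinding of definitions, deduce (b)(2) as a formal consequence, and then prove (b)(1) via a gluing construction. For (a)(1), the \emph{if} direction reduces to computing the fibre product $h_{Z/W}\times_{h_{X/S}}h_{Y/T}$ for an arbitrary morphism $\varphi:Z/W\to X/S$: when $Y\hookrightarrow X$ and $T\hookrightarrow S$ are open embeddings, an $(R/Q)$-point of this fibre product is simply an $(R/Q)$-point of $Z/W$ whose composition to $X/S$ factors through $Y/T$, an open condition that identifies the fibre product with $h_{U/V}$ for the preimages $U\subset Z$, $V\subset W$. The \emph{only if} direction follows by specializing to $\varphi=\text{id}_{X/S}$. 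For (a)(2), both $h_{X_T/T}(Y/V)$ and $(h_{X/S})_{h_T,\Phi}(Y/V)$ consist of the same data -- a continuous map $V\to T$ together with a morphism $Y/V\to X/S$ whose base component equals the composition $V\to T\to S$ -- and comparison via Definition \ref{rel-bc} yields the natural isomorphism. For (a)(3), an $(Y/T)$-point of $(h_{X/S})^{h_U}$ is, by Lemma \ref{morphism-of-PShv}, a morphism $Y/T\to X/S$ whose underlying continuous map $Y\to X$ factors through $U$ -- precisely a morphism $Y/T\to U/S$.

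Part (b)(2) is then a formal consequence of (a). For $\fX\cong h_{X/S}$, (a)(2) directly gives $\fX_{T,\Phi}\cong h_{X_T/T}$. For the source-change version, the topological-space analogue of (a)(1) identifies any open sub-sheaf $\fU\subset h_X = \fX^\text{top}$ with $h_U$ for some open $U\subset X$; then (a)(3) gives $\fX^\fU\cong h_{U/S}$, and base changing via (a)(2) produces representability by $h_{U_T/T}$. A direct unwinding using the canonical isomorphism $(\fX_{T,\Phi})^\text{top}\cong h_T\times_{h_S}h_X$ matches this with $(\fX_{T,\Phi})^{\fU_T}$.

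The main content lies in (b)(1). The direction ``$\fX$ representable $\Rightarrow$ each $\fX_\alpha$ representable'' reduces to the open-sub-sheaf case of (b)(2). For the converse, write $\fX_\alpha\cong h_{X_\alpha/S_\alpha}$. For each ordered pair $\alpha,\beta$, the fibre product $\fX_\alpha\times_\fX\fX_\beta$ is simultaneously an open sub-sheaf of $\fX_\alpha$ and of $\fX_\beta$, so by (a)(1) it corresponds to open embeddings $U_{\alpha\beta}/V_{\alpha\beta}\hookrightarrow X_\alpha/S_\alpha$ and $U_{\beta\alpha}/V_{\beta\alpha}\hookrightarrow X_\beta/S_\beta$ together with a canonical rel--$C^r$ isomorphism $\varphi_{\alpha\beta}$ obtained via Yoneda from the two representations of the same sub-sheaf; the cocycle condition on triples follows by applying the argument inside the triple fibre product $\fX_\alpha\times_\fX\fX_\beta\times_\fX\fX_\gamma$. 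I would then form $X:=(\bigsqcup_\alpha X_\alpha)/\sim$ and $S:=(\bigsqcup_\alpha S_\alpha)/\sim$ using the $\varphi_{\alpha\beta}$, transport the rel--$C^r$ atlases from the $X_\alpha/S_\alpha$ to obtain a rel--$C^r$ structure on $X/S$, and verify $h_{X/S}\cong\fX$ by refining morphisms and sections along the cover using the sheaf property of both $\fX$ and $h_{X/S}$.

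The main obstacle is ensuring that the gluing in (b)(1) produces a genuine rel--$C^r$ manifold rather than merely a topological gluing. This hinges on the observation that the transition data $\varphi_{\alpha\beta}$ arise from isomorphisms of representable sheaves and hence, by Yoneda, come from genuine rel--$C^r$ maps between $U_{\alpha\beta}/V_{\alpha\beta}$ and $U_{\beta\alpha}/V_{\beta\alpha}$; once this is in place, verifying the rel--$C^r$ atlas conditions from Definition \ref{rel-Cr-mflds} (and checking the sheaf axioms match up) is mechanical via Lemma \ref{rel-chain}.
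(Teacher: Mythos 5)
Your proposal matches the paper's proof almost step for step: direct unwinding for parts (a)(1)--(3), deduction of (b)(2) from (a), and the gluing construction for (b)(1), including the observation that the transition data $\varphi_{\alpha\beta}$ are genuine rel--$C^r$ isomorphisms by Yoneda.

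There is one small but real gap in your treatment of the ``more generally'' clause of (b)(2). That clause does not assume $\fX$ itself is representable --- only that $\fX^\fU$ is --- whereas your argument begins by identifying $\fU$ as an open sub-sheaf of $h_X=\fX^\text{top}$ and then invoking (a)(3) for $\fX\cong h_{X/S}$; both steps already presume $\fX$ representable. The intended (and shorter) route is to apply (a)(2) directly to the representable sheaf $\fX^\fU$: since $(\fX^\fU)_\text{base}\cong\fX_\text{base}$, the given $\Phi:T\to\fX_\text{base}$ is also a morphism to $(\fX^\fU)_\text{base}$, so (a)(2) shows $(\fX^\fU)_{T,\Phi}$ is representable, and the identity $(\fX^\fU)_{T,\Phi}=(\fX_{T,\Phi})^{\fU_T}$ is then a separate direct check from the definitions of base and source change, rather than an ingredient in proving representability.
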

	\begin{proof}
		\begin{enumerate}[(a)]
		    \item 
		        \begin{enumerate}[(1)]
		            \item Suppose first that $h_{Y/T}\to h_{X/S}$ is an open sub-sheaf. Then, by the definition of open sub-sheaf applied to $\varphi = \text{id}_{X/S}\in h_{X/S}(X/S)$, we see that $Y\to X$ and $T\to S$ are open embeddings. Conversely, if $Y\to X$ and $T\to S$ are open embeddings, then for any rel--$C^r$ map $(\tilde\varphi,\varphi):Z/B\to X/S$, we observe that $Z' = \tilde\varphi^{-1}(Y)\subset Z$ and $B' = \varphi^{-1}(T)\subset B$ are open embeddings. We also have the isomorphism
		            \begin{align}
		                h_{Z'/B'}\xrightarrow{\simeq} h_{Z/B}\times_{h_{X/S}}h_{Y/T}.
		            \end{align}
		            \item This assertion follows from the definition once we observe that the data of a rel--$C^r$ map $Z/B\to X_T/T$ is the same as the data of a rel--$C^r$ map $Z/B\to X/S$ and a continuous map $B\to T$ such that they both induce the same map $B\to S$.
		            \item This assertion also follows from the definition once we observe that the data of a rel--$C^r$ map $Z/B\to U/S$ is the same as the data of a rel--$C^r$ map $Z/B\to X/S$ such that the associated continuous map $Z\to X$ factors through the inclusion $U\subset X$. 
		        \end{enumerate} 
		        \item
		        \begin{enumerate}[(1)]
		            \item We prove that if each $\fX_\alpha$ is representable, then so is $\fX$ (the converse is easier). Identify each $\fX_\alpha$ with $h_{X_\alpha/S_\alpha}$ for some rel--$C^r$ manifold $X_\alpha/S_\alpha$. Since each $h_{X_\alpha/S_\alpha}\to\fX$ is equivalent to the inclusion of an open sub-sheaf, each pairwise fibre product $h_{X_\alpha/S_\alpha}\times_\fX h_{X_\beta/S_\beta}$ is represented by a rel--$C^r$ manifold $X_{\alpha\beta}/S_{\alpha\beta}$ with the maps $X_{\alpha\beta}/S_{\alpha\beta}\to X_\alpha/S_\alpha$ being rel--$C^r$ open embeddings. We may now define a rel--$C^r$ manifold $X/S$ by gluing $\{X_\alpha/S_\alpha\}_{\alpha\in I}$ along the `overlaps' $\{X_{\alpha\beta}/S_{\alpha\beta}\}_{\alpha,\beta\in I}$ (the relevant cocycle condition is also satisfied on triple overlaps). Now, since $\fX$ is a sheaf, the maps $h_{X_\alpha/S_\alpha}\to\fX$ glue together to a unique morphism $h_{X/S}\to\fX$. This is both a surjective map of sheaves and equivalent to the inclusion of an open sub-sheaf. As a result, it is an isomorphism.
		            \item This follows from (a)(2)--(3) above.
		        \end{enumerate}
		\end{enumerate}
	\end{proof}
	\noindent We can now state a basic criterion for representability of a presheaf on $(C^r/\cdot)$.
	\begin{proposition}[Representability Criterion]\label{rep-crit}
		Let $\fX$ be a sheaf on $(C^r/\cdot)$ and assume that the following conditions hold.
		\begin{enumerate}[\normalfont(1)]
			\item $\normalfont\fX_\text{base}$ is representable by a topological space $S$.
			\item There exists an open cover $\{U_\alpha\}_{\alpha\in I}$ of $S$ such that $\normalfont (\fX|_{U_\alpha})^\text{top}$ is representable. This implies that $\normalfont\fX^\text{top}$ is representable by a topological $S$-space $X$ with structure map $p:X\to S$.
			\item For every point $x\in X$, the sheaf \textbf{$\fX$ is representable near $x\in X$}, i.e., there exist		open neighborhoods $x\in U\subset X$ and $V\subset S$ with $p(U)\subset V$ such that $\fX^U|_V$ is representable.
		\end{enumerate}
		Then, $\fX$ is representable by a rel--$C^r$ structure on $X/S$.
	\end{proposition}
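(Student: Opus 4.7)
The plan is to exhibit an open cover of $\fX$ by representable open sub-sheaves and invoke Lemma \ref{prop-rep}(b)(1) to conclude representability.

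First, for each $x\in X$, hypothesis (3) gives opens $U_x\ni x$ in $X$ and $V_x\ni p(x)$ in $S$ with $p(U_x)\subset V_x$ and $\fX^{U_x}|_{V_x}$ representable. The natural isomorphisms built into the source- and base-change definitions give
\begin{align*}
(\fX^{U_x}|_{V_x})^\text{top} \simeq V_x\times_S U_x = U_x, \qquad (\fX^{U_x}|_{V_x})_\text{base} \simeq V_x,
\end{align*}
where the first equality uses $p(U_x)\subset V_x$. Thus the representing rel--$C^r$ manifold has underlying $S$-space $U_x/V_x$ with structure map $p|_{U_x}$, endowing $U_x/V_x$ with a rel--$C^r$ structure $\cA_x$. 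The restrictions of $\cA_x$ and $\cA_y$ to any overlap $U_x\cap U_y/V_x\cap V_y$ agree, since both represent the same open sub-sheaf $\fX^{U_x\cap U_y}|_{V_x\cap V_y}$ and Yoneda forces uniqueness up to canonical isomorphism.

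The main subtlety I anticipate is that $\bigcup_x V_x$ need only cover $p(X)$, not all of $S$, so the collection $\{\fX^{U_x}|_{V_x}\}_{x\in X}$ alone may fail the base-covering condition in the definition of an open cover of $\fX$---e.g.\ for a morphism $\varphi:\emptyset/T\to\fX$ corresponding (by hypothesis (1)) to a map $T\to S$ hitting $S\setminus p(X)$. To remedy this, I will also include the source change $\fX^\emptyset$ along the open sub-sheaf $\emptyset\subset X$. A direct calculation gives $\fX^\emptyset\simeq h_{\emptyset/S}$: using $\fX(\emptyset/T)=\fX_\text{base}(T)=\Hom(T,S)$ from hypothesis (1), one verifies that $\fX^\emptyset(Y/T)=\Hom(T,S)$ when $Y=\emptyset$ and is empty otherwise, matching $h_{\emptyset/S}(Y/T)$ exactly.

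The augmented collection $\{\fX^{U_x}|_{V_x}\}_{x\in X}\cup\{\fX^\emptyset\}$ is then an open cover of $\fX$ by representable open sub-sheaves: each is an open sub-sheaf (source and base change along open inclusions preserve openness), and for any $\varphi:Y/T\to\fX$ the topological pullbacks $\{(\varphi^\text{top})^{-1}(U_x)\}$ cover $Y$ (since $\{U_x\}$ covers $X$), while the base pullbacks contain $T$ itself from the $\fX^\emptyset$ component, trivially covering $T$. Applying Lemma \ref{prop-rep}(b)(1) then yields representability of $\fX$; the representing rel--$C^r$ manifold is obtained by gluing $\{U_x/V_x,\cA_x\}_{x\in X}$ with the disjoint $\emptyset/S$, producing the $S$-space $X/S$ with structure map $p$, which matches the identifications $\fX^\text{top}\simeq h_X$ and $\fX_\text{base}\simeq h_S$ under Lemma \ref{morphism-of-PShv}.
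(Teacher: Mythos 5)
Your proof is correct and follows essentially the same strategy as the paper: pass from hypothesis (3) to an open cover of $\fX$ by representable open sub-sheaves and invoke Lemma~\ref{prop-rep}(b)(1). You have, in addition, correctly spotted and fixed a small gap that the paper's proof glosses over: hypothesis~(3) directly only produces $V_x$ with $\bigcup_x V_x\supset p(X)$, whereas an open cover of $\fX$ in the sense required by Lemma~\ref{prop-rep}(b)(1) must also cover $\fX_\text{base}\simeq h_S$, i.e.\ the $S_\alpha$ must cover all of $S$. (Otherwise the glued rel--$C^r$ manifold would sit over $\bigcup_x V_x$, which could be a proper open subset of $S$, and the resulting map $h_{X'/S'}\to\fX$ would not be surjective.) Your remedy --- adjoining the open sub-sheaf $\fX^\emptyset\simeq h_{\emptyset/S}$, verified via $\fX(\emptyset/T)=\fX_\text{base}(T)\simeq\Hom(T,S)$ --- is exactly what is needed, and is the implicit content of the paper's unjustified assertion that one ``can find open covers $\{X_\alpha\}$ of $X$ and $\{S_\alpha\}$ of $S$.'' The remainder of your argument (the rel--$C^r$ structures on $U_x/V_x$ agree on overlaps because they represent the same sub-sheaf $\fX^{U_x\cap U_y}|_{V_x\cap V_y}$, and the glued object has underlying topology $X/S$ with structure map $p$) is sound, though largely subsumed in the statement of Lemma~\ref{prop-rep}(b)(1).
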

	\begin{proof}
		In view of assumptions (1) and (2), we get a map $p:X\to S$ of topological spaces representing $s''_\fX:\fX^\text{top}\to\fX_\text{base}$. Now, by assumption (3), we can find open covers $\{X_\alpha\}_{\alpha\in I}$ of $X$ and $\{S_\alpha\}_{\alpha\in I}$ of $S$ with $p(X_\alpha)\subset S_\alpha$ for all $\alpha\in I$ such that $\fX^{X_\alpha}|_{S_\alpha}$ is representable by a rel--$C^r$ structure on $X_\alpha/S_\alpha$. Now, Lemma \ref{prop-rep}(b)(1) shows that $\fX$ is representable by a rel--$C^r$ structure on $X/S$, since $\{h_{X_\alpha/S_\alpha}\to\fX\}_{\alpha\in I}$ gives an open cover of $\fX$ by representable sub-sheaves.
	\end{proof}
	\section{Moduli Problem}\label{mod-problem}
	We will now introduce the holomorphic curve moduli problem and then state the main theorem of this article. After that, we make some preliminary reductions before giving the key step of the proof in \textsection\ref{sc-ift}.
	\subsection{Moduli Functors}\label{mod-functor}
	Fix integers $g,m\ge 0$, a complex manifold $\cS$ and let $\pi:\cC\to\cS$ be a proper flat analytic family of prestable curves of genus $g$ with $m$ marked points (given by analytic sections $p_1,\ldots,p_m$ of $\pi$). For any point $s\in\cS$, let $(C_s,p_1(s),\ldots,p_m(s))$ denote the corresponding fibre of $\pi$, equipped with its $m$ marked points.
	\\\\
	Let $(X,J)$ be a smooth almost complex manifold and let $\beta\in H_2(M,\bZ)$ be a homology class. Denote by $\cC^\circ$ the (open) complement in $\cC$ of the union of the singular loci of the fibres of $\pi$ (in other words, $\cC^\circ$ is the locus where $\pi$ is an analytic submersion over $\cS$). Since $\cS$ is a complex manifold, it follows that $\cC^\circ$ is naturally a complex (and thus, smooth) manifold.	Let $E$ be a finite dimensional $\bR$-vector space and let $P$ be an $\bR$-linear homomorphism
	\begin{align}\label{inhom-perturbation}
		P:E\to C^\infty(\cC^\circ\times X, \Omega^{0,1}_{\cC^\circ/\cS}\boxtimes_{\bC} T_X).
	\end{align}
	We assume that that for every $v\in E$, the section $P(v)$ is supported on a subset of $\cC^\circ\times X$ which is proper over $\cS$ (with respect to the projection $\cC^\circ\times X\to\cC^\circ\xrightarrow{\pi}X$). We want to consider the moduli space of all \textbf{regular} triples $(s,u,v)$ where
	\begin{itemize}
		\item $s\in\cS$ is a point, $v\in E$ is a vector and $u:C_s\to X$ is a smooth map with $u_*[C_s]=\beta$.
		\item The map $u$ satisfies the equation
		\begin{align}\label{inhom-pde}
			\delbar_J(u) + P(v)(\cdot\,,u(\cdot)) = 0.
		\end{align}
	\end{itemize}
	The meaning of \textbf{regular} is that the linearization of (\ref{inhom-pde}) with respect to the pair of variables	$(u,v)$ is a surjective (Fredholm) operator. In the case  when $E=0$, we will also want to consider the sub moduli space consisting of those triples	$(s,u,v)$ for which $(C_s,p_1(s),\ldots,p_m(s),u)$ is stable. Both of these moduli spaces will be given canonical rel--$C^\infty$ structures by realizing them as solutions to the problem of representing	certain sheaves on the category $(C^\infty/\cdot)$.
	\begin{definition}[Moduli functor $\fM$]\label{def-mod-functor}
		We define the functor 
		\begin{align}
			\fM_{J,P}^\text{reg}(\pi,X)_\beta:(C^\infty/\cdot)^\text{op}\to(\text{Sets})
		\end{align}
		as follows. Given any topological space $T$ and a rel--$C^\infty$ manifold $q:Y\to T$, we define		$\fM_{J,P}^\text{reg}(\pi,X)_\beta(Y,q)$ to be the set of all diagrams
		\begin{center}
		\begin{tikzcd}
			\cC \arrow[d, "\pi"] &\cC_T \arrow[d] \arrow[l] &\cC_Y \arrow[d] \arrow[l] \arrow[r, "F"]&X \\
			\cS & T \arrow[l, "\varphi"] &  Y \arrow[l, "q"] \arrow[r, "w"] & E
		\end{tikzcd}
		\end{center}
		where $\varphi$ is continuous, both squares are Cartesian, $F$ is a rel--$C^\infty$ map from $\cC_Y/\cC_T$ to $X$ (where the rel--$C^\infty$ structure is pulled back from $Y/T$), $w$ is a rel--$C^\infty$ map from $Y/T$ to $E$ and finally, for every $y\in Y$, the triple $(s,u,v):=(\varphi(q(y)),F|_{C_s},w(y))$ satisfies		(\ref{inhom-pde}), is regular and lies in class $\beta$. (Here, we regard $X$ and $E$ as rel--$C^\infty$ manifolds over a point using Remark \ref{rel-over-pt}.) The ``restriction maps" of the presheaf $\fM_{J,P}^\text{reg}(\pi,X)_\beta$ are given, in an obvious way, by pullbacks.
	\end{definition}
	\begin{definition}[$\fM$ as a functor over $\cS$]
	    Viewing $\cS$ as a rel--$C^\infty$ $\cS$-manifold using the identity map (and identifying it with the Yoneda functor $h_{\cS/\cS}$ associated to it), we get a morphism (natural transformation) $\rho_\cS:\fM_{J,P}^\text{reg}(\pi,X)_\beta\to\cS$		given by mapping a diagram as above to the map $(\varphi\circ q,\varphi):Y/T\to\cS/\cS$.
	\end{definition}
	\begin{definition}[Moduli Functor $\Mbar$]
		In the case when $E=0$, we denote the presheaf $\fM^\text{reg}_{J,P}(\pi,X)_\beta$ simply by		$\fM^\text{reg}_J(\pi,X)_\beta$. In this case, we define the sub-presheaf $\Mbar^\text{reg}_J(\pi,X)_\beta\subset\fM^\text{reg}_J(\pi,X)_\beta$ by requiring that for every $y\in Y$, $(C_s,p_1(s),\ldots,p_m(s),F|_{C_s})$ is \textbf{stable}, where $s:=\varphi(q(y))$. Stability just means that the automorphism group of $(C_s,p_1(s),\ldots,p_m(s),F|_{C_s})$ is required to be finite.
	\end{definition}
	\begin{remark}
		It is evident from the definitions that $\fM_{J,P}^\text{reg}(\pi,X)_\beta$ and $\Mbar_{J}^\text{reg}(\pi,X)_\beta$ are sheaves on		$(C^\infty/\cdot)$.
	\end{remark}
	\noindent We can now state the key results of this article.
	\begin{theorem}\label{rep}
		The sheaf $\normalfont\fM^\text{reg}_{J,P}(\pi,X)_\beta$ is representable by a separated rel--$C^r$ manifold		over $\cS$ with $\rho_\cS$ representing the structure morphism.
	\end{theorem}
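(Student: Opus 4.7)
My plan is to verify the hypotheses of the representability criterion (Proposition \ref{rep-crit}) applied to $\fM := \fM^\text{reg}_{J,P}(\pi,X)_\beta$. The first two hypotheses should come from standard considerations: when we evaluate $\fM$ at $\emptyset/T$, a diagram degenerates to the datum of a continuous map $\varphi:T\to\cS$, which identifies $\fM_\text{base}$ with $h_\cS$ and realizes $\rho_\cS$ as $s''_\fM$. Identifying $\fM^\text{top}$ amounts to describing the underlying topological moduli space of triples $(s,u,v)$ satisfying \eqref{inhom-pde}: I would give this set the Gromov topology and verify representability by constructing, in a neighborhood of every point of $\cS$, a Banach-manifold-level description of the solution space (as an open subset of $\delbar_J^{-1}(0)$ in a suitable $W^{k,p}$ space, glued together in $s$ by standard compactness/pre-gluing arguments), then extracting its topology. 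This is where the classical gluing analysis referenced in the introduction enters: it provides the standard topological model for $\fM^\text{top}$ over small neighborhoods $U_\alpha\subset\cS$.

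The substantive content lies in verifying hypothesis (3): local representability of $\fM$ near each point $x_0\in\fM^\text{top}$. At a point whose domain curve $C_{s_0}$ is \emph{smooth} (no nodes), this is the easy case: a local $C^\infty$ trivialization of $\pi:\cC\to\cS$ near $s_0$ lets us model $\fM$ locally by $(\delbar_J+P)^{-1}(0)$ in $W^{k,p}(C_{s_0},X)\times\cU\times E$, and the ordinary Banach-manifold implicit function theorem (using regularity of the linearization) produces an honest $C^\infty$ manifold over $\cU$, which one verifies represents $\fM^{U}|_\cU$ by checking that rel--$C^\infty$ maps into $\fM$ correspond bijectively to rel--$C^\infty$ solutions of the PDE over the parameter base.

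The main obstacle is the nodal case, where $C_{s_0}$ has nodes and the domain geometry itself varies with $s$. Here the plan is to use polyfold theory, following \cite{HWZ-GWbook} and \cite{HWZ-ImpFuncThms}: choose a gluing profile, model the local universal family by standard gluing in weighted Sobolev spaces, and realize a neighborhood of $x_0$ in the thickened configuration space as an M-polyfold, with the Cauchy--Riemann section being sc-Fredholm. Regularity at $x_0$ lets the polyfold implicit function theorem produce a local sc-smooth retract cutting out the solution space. The key point I will need to extract is that such a retract, \emph{pulled back to a point-wise regular value of the gluing parameters over each $s\in\cS$}, defines a genuine rel--$C^\infty$ manifold over a neighborhood $V\subset\cS$ of $s_0$: the sc-smoothness with respect to gluing and complex-structure parameters yields ordinary differentiability of the fibers and continuity of their derivatives in $s$, which is exactly the rel--$C^\infty$ condition. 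Once this local model is constructed, I would verify that it represents $\fM^U|_V$ by checking that any rel--$C^\infty$ test diagram $Y/T\to\fM$ pulls back to an sc-smooth lift into the polyfold chart (via a Newton iteration / right-inverse construction applied fiberwise), and hence factors uniquely through the representing object. Independence from the polyfold-theoretic choices then follows automatically from the universal property. Separatedness is clear from the Hausdorffness of $\fM^\text{top}\to\cS$ built into the Gromov topology. The delicate points to scrutinize carefully will be (i) that sc-smoothness in the polyfold sense, combined with ordinary smoothness of $F$ along fibers, genuinely promotes to rel--$C^\infty$ in the sense of Definition \ref{rel-Cr-mflds}, and (ii) uniqueness of the factorization, which will rest on the fact that solutions of the nonlinear PDE in a small $W^{k,p}$-neighborhood are unique, so the polyfold chart captures all lifts.
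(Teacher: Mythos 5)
Your high-level strategy agrees with the paper's: verify the three hypotheses of Proposition \ref{rep-crit}, identify $\fM_\text{base}\simeq h_\cS$ and $\fM^\text{top}$, and use polyfold theory to produce local rel--$C^\infty$ charts. But there is a genuine gap at the crux of the argument: your method for showing the local chart represents the functor $\fM^U|_V$, i.e., that the inverse of the chart map is rel--$C^\infty$, does not obviously work.

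You propose to take a rel--$C^\infty$ test diagram $Y/T\to\fM$ and ``pull it back to an sc-smooth lift into the polyfold chart via a Newton iteration / right-inverse construction applied fiberwise.'' The problem is that Newton iteration produces, for each $y\in Y$, the coordinates $(\alpha(y),v(y),\kappa(y))\in U_u$ of the solution, but the resulting map $y\mapsto\kappa(y)\in K$ lives in an infinite-dimensional functional-analytic setting, and it is not at all clear that the iterative construction yields rel--$C^\infty$ regularity in $y$: you would need the Newton iterates to converge uniformly in every $C^m$ norm fiberwise, with continuity of all fiberwise derivatives across $T$, and this requires estimates you have not set up. Invoking ``uniqueness of solutions in a small $W^{k,p}$-ball'' shows the lift is unique as a set map, but uniqueness is not the issue --- regularity of the lift is. The paper resolves this with a targeted trick: the sc-complement $\ker L$ of $K=\ker D_0$ is chosen (Lemma \ref{complement-props}) to be cut out by evaluation of $\xi$ at finitely many interior points $x_1,\ldots,x_\ell\in C'$ modulo subspaces $V_i\subset T_{u(x_i)}X$, together with a subspace $E'\subset E$. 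With this choice, the inverse chart map is given explicitly by formula \eqref{chart-transition}, $\Psi(y)=L^{-1}\bigl(w(y)-v_0\text{ (mod }E'),\exp^{-1}_{u(x_i)}F(x_i(y))\text{ (mod }V_i)\bigr)$, which is a composition of the given rel--$C^\infty$ data ($F$, $w$, and the tautological sections $x_i(y)$) with fixed smooth maps --- hence manifestly rel--$C^\infty$. Without this specific choice of complement, the inverse map cannot be read off by pointwise evaluation, and your plan has no replacement for this step.

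A secondary, more technical point: you also assert that ``sc-smoothness with respect to gluing and complex-structure parameters yields ordinary differentiability of the fibers and continuity of their derivatives in $s$, which is exactly the rel--$C^\infty$ condition.'' This is too quick for the \emph{evaluation map} $\text{ev}_{(s,u,v_0)}:\cC\times K_u\to X$: the map factors through the infinite-dimensional target $H^{k,\delta}$, and the rel--$C^\infty$ condition on the evaluation has to be verified via the auxiliary notion of weak rel--$C^\infty$ morphisms (the paper's Appendix \ref{wk-appx}, in particular Lemma \ref{wkrel-ev}, which requires exponential-decay estimates to control the gluing near nodes). It does hold for the finite-dimensional obstruction map $\fo_{(s,u,v_0)}$ essentially as you describe. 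Finally, your treatment of $\fM^\text{top}$ via ``Gromov topology'' and Banach-manifold charts glued by pre-gluing is heavier than necessary and conflates the verification of hypothesis (2) with hypothesis (3): the paper instead defines the $C^0$ topology on $C^0(\cC/\cS,X)$ abstractly and establishes a universal property (Lemma \ref{c0-univ}) that makes the representability of $\fM^\text{top}$ essentially tautological, keeping the gluing analysis confined to hypothesis (3).
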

	\begin{theorem}\label{stable-rep}
		When $E=0$, the sub-sheaf $\normalfont\Mbar^\text{reg}_J(\pi,X)_\beta\subset		\fM^\text{reg}_J(\pi,X)_\beta$ is open and has the same base space $\cS$ and structure morphism $\rho_\cS$.
	\end{theorem}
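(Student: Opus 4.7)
By Theorem \ref{rep}, the sheaf $\fM^\text{reg}_J(\pi, X)_\beta$ is represented by a rel--$C^\infty$ manifold $M/\cS$ with structure map realizing $\rho_\cS$. The plan is to show that the stable locus $V \subset M$---the set of $(s, u) \in M$ for which $\Aut(C_s, p_1(s), \ldots, p_m(s), u)$ is finite---is open in $M$. Once this is established, Lemma \ref{prop-rep}(a)(3) exhibits $h_{V/\cS}$ as an open sub-sheaf of $h_{M/\cS} = \fM^\text{reg}_J(\pi, X)_\beta$; comparing with the defining universal property of $\Mbar^\text{reg}_J(\pi, X)_\beta$ (stability being a pointwise condition on $(s, u)$) identifies this open sub-sheaf as $\Mbar^\text{reg}_J(\pi, X)_\beta$, with both the base $\cS$ and the structure morphism $\rho_\cS$ inherited from the ambient sheaf.

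To show $V$ is open I would verify that the unstable locus is closed. Suppose $(s_n, u_n) \to (s_0, u_0)$ in $M$ with every $(s_n, u_n)$ unstable, so that each $C_{s_n}$ contains an irreducible component $D_n$ with $u_n|_{D_n}$ constant and $2g(D_n) + k(D_n) \le 2$, where $k(D_n)$ counts marked points plus branches of nodes on $D_n$. Locally near $s_0$ the family $\pi$ is a partial smoothing of $C_{s_0}$, so $D_n$ corresponds to a subcollection $\{A_1, \ldots, A_\ell\}$ of the irreducible components of $C_{s_0}$ together with a subset of their internal nodes (the ones smoothed in $C_{s_n}$). Since there are only finitely many such combinatorial possibilities, after passing to a subsequence the data may be taken independent of $n$.

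The key step is a short bookkeeping argument. Using $g(D_n) = \sum_i g(A_i) + |N_\text{sm}| - \ell + 1$ (for a connected smoothing with $|N_\text{sm}|$ smoothed internal nodes), and noting that smoothed nodes disappear as special points while non-smoothed internal nodes contribute two branches and external nodes contribute one, one obtains
\begin{align}
2g(D_n) + k(D_n) = \sum_{i=1}^\ell \bigl(2g(A_i) + k(A_i)\bigr) - 2\ell + 2.
\end{align}
The inequality $2g(D_n) + k(D_n) \le 2$ thus forces $2g(A_i) + k(A_i) \le 2$ for some $i$. Combined with the constancy of $u_n|_{D_n}$ and the fact that convergence in $M$ implies $C^0$-convergence of $u_n$ to $u_0$ on compact subsets of $A_i$ away from the nodes of $C_{s_0}$ (an implicit feature of the underlying moduli topology discussed in $\S\ref{mod-problem}$), one deduces that $u_0|_{A_i}$ is constant, witnessing the unstability of $(s_0, u_0)$.

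The expected main obstacle is the last step---ensuring that rel--$C^\infty$ convergence on $M$ really does imply $C^0$-convergence of maps on fixed compact subsets of the central fibre---but this should follow directly from the gluing-theoretic construction of the rel--$C^\infty$ charts in $\S\ref{sc-ift}$. Everything else is elementary combinatorics of prestable curves.
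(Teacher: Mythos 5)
Your proof is correct, but it takes a genuinely different route from the paper's. The paper's argument (Lemma \ref{stable-is-open}) works in the \emph{forward} direction: at a stable map $(s,u)$, $J$-holomorphicity forces $u$ to be non-constant, hence an immersion somewhere, on each unstable component of $(C_s,p_\bullet(s))$; adding finitely many such immersion points $q_1,\ldots,q_r$ stabilizes the marked curve, the rel--$C^1$ regularity of families of holomorphic maps away from the nodes (Lemma \ref{elliptic-reg}) together with the implicit function theorem with parameters (Lemma \ref{rel-ift}) tracks these auxiliary marked points continuously to nearby fibres as zeros of $h_i\circ F$, and openness of stable curves in the stack of prestable curves (cited from \cite{ACG-moduli}) finishes the job. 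Your argument is the \emph{contrapositive}: instability is sequentially closed, via a subsequence in which the dual-graph combinatorics stabilizes, the additivity of $-\chi(\tilde D) = 2g(\tilde D)+k(\tilde D)-2$ under degeneration (which is exactly your bookkeeping identity $2g(D_n)+k(D_n) = \sum_i(2g(A_i)+k(A_i))-2\ell+2$, after the pigeonhole), and locally uniform $C^0$-convergence of $u_n$ to $u_0$ away from the nodes. Both are valid; what each buys is slightly different. Your route uses only the underlying $C^0$ (graph-Hausdorff) topology from Lemma \ref{top-moduli} and elementary curve combinatorics, and never touches the rel--$C^\infty$ structure, whereas the paper's constructive route is repurposed almost verbatim in \textsection\ref{et-chart} to build local \'etale charts for $\Mbar_{g,m}(X,J)_\beta$, including at non-regular maps --- something the closedness argument does not produce. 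Two small remarks: the $C^0$-convergence step you flag as the ``expected obstacle'' is indeed correct and follows from Lemma \ref{dH} together with the local $C^\infty$ triviality of $\pi$ over $\cC^\circ$ (the paper handles the analogous point via Lemmas \ref{elliptic-reg} and \ref{top-chart}); and your sequential criterion for closedness requires first-countability of the moduli space, which is supplied by the metrizability established in Lemmas \ref{top-moduli} and \ref{dH}.
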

	\begin{corollary}\label{stable-rep-cor}
		The sheaf $\normalfont\Mbar_J^\text{reg}(\pi,X)_\beta$ is representable by a separated rel--$C^r$ $\cS$-manifold with structure morphism given by $\rho_\cS$.
	\end{corollary}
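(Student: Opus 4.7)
The plan is to deduce the corollary as an essentially formal consequence of Theorem \ref{rep} (specialized to $E=0$) and Theorem \ref{stable-rep}, using the machinery of open sub-sheaves from Section \ref{rel}. No new analytic input is required; all the substantive content resides in the two preceding theorems.

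First, Theorem \ref{rep} with $E=0$ yields a separated rel--$C^\infty$ manifold $Z/\cS$ representing $\fM^\text{reg}_J(\pi,X)_\beta$, with structure map $Z\to\cS$ corresponding to $\rho_\cS$ under the Yoneda identification. Then Theorem \ref{stable-rep} asserts that $\Mbar^\text{reg}_J(\pi,X)_\beta\hookrightarrow\fM^\text{reg}_J(\pi,X)_\beta$ is an open sub-sheaf whose base sheaf is represented by $\cS$ itself, with the same structure morphism $\rho_\cS$.

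Next, I would unwind the definition of open sub-sheaf at the tautological element $\text{id}_{Z/\cS}\in\fM^\text{reg}_J(\pi,X)_\beta(Z/\cS)$. Since the map $h_{Z/\cS}\to\fM^\text{reg}_J(\pi,X)_\beta$ is an isomorphism, the fibre product $h_{Z/\cS}\times_{\fM^\text{reg}_J(\pi,X)_\beta}\Mbar^\text{reg}_J(\pi,X)_\beta$ is canonically identified with $\Mbar^\text{reg}_J(\pi,X)_\beta$ itself. Openness therefore produces a rel--$C^\infty$ manifold $Z'/\cS'$ representing $\Mbar^\text{reg}_J(\pi,X)_\beta$, together with open embeddings $Z'\hookrightarrow Z$ and $\cS'\hookrightarrow\cS$. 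Passing to base sheaves (via Remark \ref{top-base}) gives $h_{\cS'}\cong(\Mbar^\text{reg}_J(\pi,X)_\beta)_\text{base}\cong h_\cS$, which forces $\cS'=\cS$. Hence $Z'$ is an open subset of $Z$ equipped with the induced rel--$C^\infty$ $\cS$-structure, and its structure map represents $\rho_\cS$ as required.

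Finally, separatedness of $Z'/\cS$ follows from that of $Z/\cS$: the diagonal of $Z'$ in $Z'\times_\cS Z'$ is the intersection of the (closed) diagonal of $Z$ in $Z\times_\cS Z$ with the open subspace $Z'\times_\cS Z'\subset Z\times_\cS Z$, and so remains a closed embedding. The corollary presents no real obstacle; it is a clean extraction from the two preceding theorems through the abstract representability formalism.
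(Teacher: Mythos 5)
Your proof is correct and follows the same route as the paper, which simply states the corollary is a direct consequence of Theorems \ref{rep} and \ref{stable-rep}; you have merely unwound the definition of open sub-sheaf and checked the base-space and separatedness bookkeeping explicitly, all of which is sound.
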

	\begin{proof}
		{Direct consequence of Theorem \ref{stable-rep} and Theorem \ref{rep}.}
	\end{proof}
	\begin{remark} 
		In the sections that follow, we will abbreviate $\fM^\text{reg}_{J,P}(\pi,X)_\beta$ to $\fM$ (and similarly, $\Mbar^\text{reg}_J(\pi,X)_\beta$ to $\Mbar$) wherever possible -- provided $(X,J,\beta,\pi:\cC\to\cS)$ and $(E,P)$ are clear from the context. To prove Theorem \ref{rep}, we will check the three hypotheses of Proposition \ref{rep-crit} to show that $\fM$ is representable. We will then argue separately that $\Mbar\subset\fM$ is open, proving Theorem \ref{stable-rep} and Corollary \ref{stable-rep-cor}.
	\end{remark}
	\subsection{First Steps towards Representability}
	We develop some basic background material from topology and then verify the first two hypotheses of Proposition	\ref{rep-crit} for the functor $\fM$. We also establish the compatibility of the functor $\fM$ with base change -- this	will be useful when we establish local representability of $\fM$ in \textsection\ref{sc-ift}.
	\subsubsection{Primer on the $C^0$ topology}\label{c0-primer}
	Let $E,B,Z$ be metrizable topological spaces and let $\pi:E\to B$ be a continuous, proper and open map. Let $C^0(E/B,Z)$ be the set of pairs $(b,f)$ where $b\in B$ and $f:E_b\to Z$ is a continuous map (where we set $E_b:=\pi^{-1}(b)\subset E$).
	\begin{definition}[$C^0$ Topology]
		We define the \textbf{$C^0$ topology} on the set $C^0(E/B,Z)$ to be the topology generated by the following	basis of open sets. For any open $\cU\subset E\times Z$, we let
		\begin{align}
			\cN(\cU) := \{(b,f)\in C^0(E/B,Z)\;|\;\Gamma_f\subset\cU\}	
		\end{align}
		be a basic open set (here, $\Gamma_f\subset E_b\times Z\subset E\times Z$ denotes the graph of $f:E_b\to Z$).		We have 
		\begin{align}
			\cN(\cU_1)\cap\cN(\cU_2) = \cN(\cU_1\cap\cU_2)
		\end{align} 
		for any two open sets $\cU_1,\cU_2\subset E\times Z$, which shows that this is indeed a basis for a topology.		Sometimes, when $(b,f)\in\cN(\cU)$, we'll denote $\cN(\cU)$ as $\cN(f,\cU)$ to emphasize that it is an open neighborhood of $(b,f)$.
	\end{definition}
	\begin{lemma}\label{ev}
		The natural map $p:C^0(E/B,Z)\to B$ given by $(b,f)\mapsto b$ is continuous. In addition, the natural evaluation map
		\begin{align}
			\normalfont\text{ev}_{E/B,Z}:C^0(E/B,Z)\times_BE&\to Z\\
			((b,f),e)&\mapsto f(e)
		\end{align}
		is also continuous.
	\end{lemma}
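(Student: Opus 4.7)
The plan is to verify both continuity assertions directly against the basis $\{\cN(\cU)\}_{\cU\subset E\times Z\text{ open}}$ by constructing appropriate $\cU$'s. The only ingredients beyond the definitions are continuity of $\pi$ (for part 1) and metrizability of $E$ (for part 2), the latter being used to separate $e_0$ from a suitable closed ``bad set'' in $E$.

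For continuity of $p$: given an open $V\subset B$ and a point $(b_0,f_0)\in p^{-1}(V)$, take $\cU:=\pi^{-1}(V)\times Z$, which is open in $E\times Z$ by continuity of $\pi$. Then $\Gamma_{f_0}\subset E_{b_0}\times Z\subset \pi^{-1}(V)\times Z$, so $(b_0,f_0)\in\cN(\cU)$. Conversely any $(b,f)\in\cN(\cU)$ satisfies $E_b\subset\pi^{-1}(V)$, which (picking any point of the nonempty fibre $E_b$) forces $b\in V$. Hence $\cN(\cU)\subset p^{-1}(V)$.

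For continuity of $\text{ev}$: fix $((b_0,f_0),e_0)$ in the fibre product (so $\pi(e_0)=b_0$) and an open neighborhood $W\subset Z$ of $f_0(e_0)$. The ``bad set''
\[
K := E_{b_0}\cap f_0^{-1}(Z\setminus W)
\]
is closed in $E_{b_0}$ by continuity of $f_0$, and hence closed in $E$ because $E_{b_0}=\pi^{-1}(b_0)$ is closed in $E$. Since $f_0(e_0)\in W$, we have $e_0\notin K$, so by metrizability (and hence normality) of $E$ we may choose disjoint open sets $N\ni e_0$ and $E'\supset K$ in $E$. Define
\[
\cU := (E'\times Z)\cup (E\times W),
\]
which is open in $E\times Z$. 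Every point $(e,f_0(e))$ of $\Gamma_{f_0}$ lies in $\cU$: if $e\in K$ then $e\in E'$, and otherwise $f_0(e)\in W$. Thus $(b_0,f_0)\in\cN(\cU)$. Finally, for any $(b,f)\in\cN(\cU)$ and any $e\in N\cap E_b$, the pair $(e,f(e))\in\cU$ cannot lie in $E'\times Z$ (since $N\cap E'=\emptyset$), so must lie in $E\times W$, giving $f(e)\in W$. This exhibits a basic open neighborhood $\cN(\cU)\times_B N$ of $((b_0,f_0),e_0)$ that maps into $W$.

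The only mildly delicate point is the construction of $\cU$ in part (2): a naive choice such as $\cU=U\times W$ for a small $U\ni e_0$ would force $\Gamma_{f_0}\subset U\times W$, which fails as soon as $f_0$ takes values outside $W$ anywhere on $E_{b_0}$. The ``two-piece'' construction sidesteps this by placing no restriction on $f$ near the bad set $K$ (via the factor $E'\times Z$) while constraining the $Z$-coordinate everywhere else (via $E\times W$). Disjointness of $N$ from $E'$ is exactly what lets a global constraint $\Gamma_f\subset\cU$ translate into a pointwise constraint $f(e)\in W$ for $e$ near $e_0$.
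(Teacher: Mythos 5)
Your proof is correct and essentially identical to the paper's. The paper's choice $\cU_V:=(E\times Z)\setminus(\overline W\times(Z\setminus V))$, where $W$ is a small metric ball around $e_0$ with $f_0(\overline W\cap E_{b_0})\subset V$, unwinds by de Morgan to exactly your two-piece set $(E'\times Z)\cup(E\times V)$ with $E'=E\setminus\overline W$ (which contains your bad set $K$) and $N=W$; the only cosmetic difference is that the paper produces the disjoint pair $(N,E')$ directly from the metric rather than by citing normality.
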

	\begin{proof}
		Given an open set $U\subset B$, define $\cU_U:=\pi^{-1}(U)\times Z\subset E\times Z$, and note that we have $p^{-1}(U) = \cN(\cU_U)$. This proves the continuity of the map $p$. Now, given a point $((b,f),e)\in C^0(E/B,Z)\times_BE$, set $z:=f(e)$ and let $V\subset Z$ be a neighborhood of $z$. Choose an open set $e\in W\subset E$ such that $f(\overline{W}\cap E_b)\subset V$ (possible since $f$ is continuous and $E$ is metrizable -- we can simply take $W$ to be a small metric ball around $e$). Define $\cU_V$ to be the complement (in $E\times Z$) of the closed subset $\overline W\times(Z\setminus V)$. Clearly, $\Gamma_f\subset\cU_V$ and we have
		\begin{align}
			\cN(f,\cU_V)\times_BW\subset\text{ev}_{E/B,Z}^{-1}(V).
		\end{align}
		This proves the continuity of the map $\text{ev}_{E/B,Z}$.
	\end{proof}
	\begin{definition}[Hausdorff Metric]
		Let $d_E$ and $d_Z$ be metrics on $E$ and $Z$ inducing their respective topologies. This induces a metric		$d_{E\times Z}:=d_E+d_Z$ on the product $E\times Z$. Define the \textbf{induced Hausdorff metric} $d_H$ on $C^0(E/B,Z)$ to be the one for which the distance between two points $(b,f),(b',f')$ is the Hausdorff distance between the closed subsets $\Gamma_f,\Gamma_{f'}\subset E\times Z$ measured using $d_{E\times Z}$.
	\end{definition}
	\begin{lemma}\label{dH}
		The metric $d_H$ induces the $C^0$ topology on the set $C^0(E/B,Z)$.
	\end{lemma}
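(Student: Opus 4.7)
The plan is to show the two topologies agree at each point by establishing both inclusions between their neighborhood bases. Fix $(b,f) \in C^0(E/B,Z)$; by properness of $\pi$, the fiber $E_b = \pi^{-1}(b)$ is compact, whence $\Gamma_f \subset E \times Z$ is compact and $f|_{E_b}$ is uniformly continuous.

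For the inclusion of the $C^0$ topology into the $d_H$ topology: given a basic open $\cN(\cU) \ni (b,f)$, compactness of $\Gamma_f$ inside the metric space $(E\times Z, d_{E\times Z})$ yields $\varepsilon > 0$ with $d_{E\times Z}(\Gamma_f, (E\times Z)\setminus\cU) \geq \varepsilon$. Then any $(b',f')$ satisfying $d_H((b,f),(b',f')) < \varepsilon$ has $\Gamma_{f'}$ contained in the open $\varepsilon$-neighborhood of $\Gamma_f$, which in turn sits inside $\cU$, so $(b',f') \in \cN(\cU)$.

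The reverse inclusion is the harder direction and uses all three properties of $\pi$. Given $\varepsilon > 0$, I would first choose $r > 0$ small enough that $2r < \varepsilon/4$ and that uniform continuity of $f|_{E_b}$ yields $d_Z(f(x),f(y)) < \varepsilon/4$ whenever $x,y \in E_b$ satisfy $d_E(x,y) < 3r$. Cover the compact $E_b$ by finitely many balls $B_r(e_1), \ldots, B_r(e_n)$ with $e_i \in E_b$; by openness of $\pi$, each $V_i := \pi(B_r(e_i))$ is an open neighborhood of $b$, as is $V := \bigcap_i V_i$. Setting $\delta := \min(r, \varepsilon/4)$, take $\cU$ to be the intersection of the open $\delta$-neighborhood of $\Gamma_f$ with $\pi^{-1}(V) \times Z$. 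Then $(b,f) \in \cN(\cU)$, and for any $(b',f') \in \cN(\cU)$ with $E_{b'}$ nonempty, each $e' \in E_{b'}$ satisfies $b' = \pi(e') \in V$, while $\Gamma_{f'} \subset \cU$ forces $\sup_{p' \in \Gamma_{f'}} d(p', \Gamma_f) < \delta$. For the reverse one-sided distance, given $(e,f(e)) \in \Gamma_f$, pick an index $i$ with $e \in B_r(e_i)$; since $b' \in V_i$, the fiber $E_{b'}$ meets $B_r(e_i)$ at some $e'$, and $(e',f'(e')) \in \cU$ yields $\tilde e \in E_b$ with $d_E(e',\tilde e) + d_Z(f'(e'),f(\tilde e)) < \delta$. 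The triangle inequality then gives $d_E(e,\tilde e) < 3r$, so uniform continuity yields $d_Z(f(e),f(\tilde e)) < \varepsilon/4$, and combining the estimates produces $d_E(e,e') + d_Z(f(e),f'(e')) < \varepsilon$, as required.

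The main obstacle is this second direction: a basic $C^0$-open $\cN(\cU)$ a priori controls only one side of the Hausdorff distance (containment of $\Gamma_{f'}$ in a tubular neighborhood of $\Gamma_f$), whereas $d_H$ demands symmetric control. Properness supplies compactness of $E_b$ (hence the finite cover and the uniform continuity of $f|_{E_b}$), while the open map hypothesis is precisely what guarantees that for every $b'$ close enough to $b$ and every $e \in E_b$ there is a nearby point of $E_{b'}$; without it, fibers could shrink abruptly and $\Gamma_{f'}$ would fail to $d_H$-approximate $\Gamma_f$ even when it lies deep inside $\cU$.
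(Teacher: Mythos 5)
Your proof is correct and follows essentially the same route as the paper: both directions hinge on compactness of $\Gamma_f$ (the easy inclusion via metric neighborhoods of the compact graph), and the harder inclusion uses exactly the paper's construction of $\cU$ as a $\delta$-tube around $\Gamma_f$ intersected with $\pi^{-1}(V)\times Z$, where $V$ is a finite intersection of images of small balls covering the fibre, together with the same triangle-inequality chain through a covering point, a nearby point of $E_{b'}$, and a nearby point of $\Gamma_f$. The only differences are cosmetic: you use a finite subcover where the paper uses a maximal $\delta$-separated net, and you track constants to get $<\varepsilon$ rather than the paper's $<5\epsilon$.
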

	\begin{proof}
		It's clear that the Hausdorff metric topology is finer than the $C^0$ topology (since $\epsilon$-neighborhoods with respect to $d_{E\times Z}$ of a graph $\Gamma_f$ form a fundamental system of neighborhoods in $E\times Z$ for the compact set $\Gamma_f$ as $\epsilon$ ranges over all positive real numbers).\\\\
		Now, we want to show that the $C^0$ topology is finer than the $d_H$-topology. To check this, let $\epsilon>0$	and $(b,f)\in C^0(E/B,Z)$ be given. Let $0<\delta\le\epsilon$ be such that for all $x,y\in E_b$ with $d_E(x,y)\le3\delta$, we have $d_Z(f(x),f(y))\le\epsilon$. Furthermore, let $x_1,\ldots,x_k$ be a maximal collection of points on $E_b$ such that $d_E(x_i,x_j)>\delta$ for $1\le i<j\le k$. Define
		\begin{align}
			U = \bigcap_{i=1}^k \pi(B_\delta(x_i,d_E)).
		\end{align}
		This is an open neighborhood of $b\in B$. Now, let $\cU$ be the (open) set of points $(e,z)\in E\times Z$ such that		$d_{E\times Z}((e,z),\Gamma_f)<\delta$ and $\pi(e)\in U$. Clearly, $\Gamma_f\subset\cU$ and thus, $\cN(f,\cU)$ is a neighborhood of $f$ in the $C^0$ topology. Now, for any $g\in N(f,\cU)$, we will show that $d_H(\Gamma_f,\Gamma_g)<5\epsilon$.\\\\
		For any $(e',g(e'))\in\Gamma_g$, we have $d((e',g(e')),\Gamma_f)\le\delta<5\epsilon$. Conversely, suppose we have $(e,f(e))\in\Gamma_f$. Pick $1\le i\le k$ such that $d_E(x_i,e)\le\delta$. Next, pick $e'\in E_{b'}$ such that $d_E(e',x_i)<\delta$ (this is possible since $b'\in U$). We can then find $(e'',f(e''))\in\Gamma_f$ such that
		\begin{align}
			d_E(e',e'') + d_Z(g(e'),f(e'')) < \delta.
		\end{align}
		We then have the estimates
		\begin{align}
			d_E(e,e')&\le d_E(e,x_i) + d_E(x_i,e') < 2\delta\le 2\epsilon\\
			d_E(e,e'')&\le d_E(e,x_i) + d_E(x_i,e') + d_E(e',e'') \le 3\delta\\
			d_Z(f(e),g(e')) &\le d_Z(f(e),f(e'')) + d_Z(f(e''),g(e')) \le \epsilon + \delta\le 2\epsilon
		\end{align}
		which together show that $d_{E\times Z}((e,f(e)),\Gamma_g)\le 4\epsilon < 5\epsilon$. Since $\epsilon>0$ and $(b,f)$ were arbitrary, this completes the proof that the $C^0$ topology is finer than the $d_H$-topology.
	\end{proof}
	\begin{lemma}[Universal Property of $C^0$ Topology]\label{c0-univ}
		Define the presheaf $\normalfont\fC^0(E/B,Z):(\text{Top})^\text{op}\to(\text{Sets})$ as follows. For a topological space $T$, a morphism $T\to\fC^0(E/B,Z)$, following the convention of Remark \ref{yoneda-lemma}, is defined to be a diagram
		\begin{center}
			\begin{tikzcd}
				E \arrow[d, "\pi"]& \arrow[l, "\tilde\varphi"]E_T \arrow[d, "\pi_T"] \arrow[r, "F"]& Z \\
				B & \arrow[l,"\varphi"] T &
			\end{tikzcd}
		\end{center}
		of continuous maps, where the square is commutative and Cartesian. (The pullback of a morphism $T\to\fC^0(E/B,Z)$ along a map $T'\to T$ is defined in the obvious way.) There's an obvious morphism of presheaves $\fC^0(E/B,Z)\to B$ given by forgetting everything in the above diagram except for $\varphi$, which makes $\fC^0(E/B,Z)$ a presheaf over $B$. Then, the natural evaluation map $\normalfont\text{ev}_{E/B,Z}$ (from Lemma \ref{ev}) gives a morphism
		\begin{align}
			C^0(E/B,Z)\to\fC^0(E/B,Z)
		\end{align} 
		over $B$ and this map an isomorphism of presheaves.
	\end{lemma}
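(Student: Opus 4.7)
The plan is to exhibit, for every topological space $T$, a natural bijection
\[
\alpha_T:\Hom_{(\text{Top})}(T,C^0(E/B,Z))\xrightarrow{\simeq}\fC^0(E/B,Z)(T)
\]
compatible with the projections to $B$. The map $\alpha_T$ sends a continuous $\Phi:T\to C^0(E/B,Z)$ to the diagram with $\varphi:=p\circ\Phi$ and $F:E_T\to Z$ defined by $F(t,e):=\Phi(t)(e)$. Set-theoretically the inverse is forced: given $(\varphi,F)$, the only possibility is $\Phi(t):=(\varphi(t),F|_{E_{\varphi(t)}})$, using the identification of the fiber of $\pi_T$ over $t$ with $E_{\varphi(t)}$ furnished by the Cartesian square. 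Naturality in $T$ and compatibility with the $B$-projections are immediate from the definitions.

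The forward direction -- continuity of $F$ given a continuous $\Phi$ -- is immediate from Lemma \ref{ev}, since $F$ equals the composite $E_T\xrightarrow{\Phi\times_B\mathrm{id}_E}C^0(E/B,Z)\times_BE\xrightarrow{\mathrm{ev}_{E/B,Z}}Z$. The substance of the proof lies in the reverse direction: continuity of the set-theoretic $\Phi$ associated to $(\varphi,F)$. For any open $\cU\subset E\times Z$, introduce the continuous map $G:=(\tilde\varphi,F):E_T\to E\times Z$. Then $\Phi(t)\in\cN(\cU)$ iff $\Gamma_{F_t}\subset\cU$ iff $\pi_T^{-1}(t)\subset G^{-1}(\cU)$, so
\[
\Phi^{-1}(\cN(\cU))=T\setminus\pi_T\bigl(E_T\setminus G^{-1}(\cU)\bigr).
\]
Hence continuity of $\Phi$ reduces to showing that $\pi_T:E_T\to T$ is a closed map.

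The main obstacle is this closedness statement, and it is where the properness of $\pi$ is used. I would argue it via a tube-lemma argument. Since $B$ is Hausdorff (being metrizable), $E_T\subset T\times E$ is closed, so any closed $K\subset E_T$ is closed in $T\times E$. Given $t_0\notin\pi_T(K)$, the fiber $\{t_0\}\times E_{\varphi(t_0)}$ is compact (by properness of $\pi$) and disjoint from $K$. Cover it by finitely many product-open sets $V_i\times W_i\subset(T\times E)\setminus K$ and set $V:=\bigcap_iV_i$ and $W:=\bigcup_iW_i$; then $(V\times W)\cap K=\emptyset$ (if $(t,e)\in V\times W$, pick $i$ with $e\in W_i$, so $(t,e)\in V_i\times W_i$). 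Using that $\pi$ is a closed map (a proper map into a Hausdorff space), $B':=B\setminus\pi(E\setminus W)$ is open and contains $\varphi(t_0)$, so $V':=V\cap\varphi^{-1}(B')$ is an open neighborhood of $t_0$ with $\pi_T^{-1}(V')\subset V\times W$, whence $V'\cap\pi_T(K)=\emptyset$. This establishes that $\pi_T$ is closed, completes the reverse direction, and yields the desired isomorphism of presheaves.
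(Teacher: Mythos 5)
Your proof is correct and follows essentially the same route as the paper: the forward direction via Lemma \ref{ev}, and the reverse direction by reducing continuity of the induced map $T\to C^0(E/B,Z)$ to closedness of $\pi_T$ applied to the complement of $(\tilde\varphi,F)^{-1}(\cU)$. The only difference is that you spell out, via a tube-lemma argument, why $\pi_T$ is closed (i.e.\ that properness is preserved under this base change), a point the paper dispatches with ``by properness of $\pi_T$''.
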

	\begin{proof}
		Given a diagram as in the statement of the lemma (but with $F$ and $\varphi$ just being set maps), let $F_\varphi:T\to C^0(E/B,Z)$ be the set map defined by $t\mapsto (\varphi(t),F|_{E_{\varphi(t)}})$. It then suffices to show that $F_\varphi$ is continuous if and only if $F$ and $\varphi$ are.\\\\
		First, suppose $F_\varphi$ is continuous. Note that, using the notation of Lemma \ref{ev}, we have
		\begin{align}
			\varphi &= p\circ F_\varphi\\
			F &= \text{ev}_{E/B,Z}\circ(F_\varphi\times_B\text{id}_E)
		\end{align}
		which shows that $\varphi$ and $F$ are continuous. Conversely, suppose $\varphi$ and $F$ are continuous.		Let $t\in T$ and $(b_t,f_t):=F_\varphi(t)\in C^0(E/B,Z)$. Also, let $\cU\subset E\times Z$ be a given open neighborhood of $\Gamma_{f_t}$. Let $\cU_T'\subset E_T$ to be the pull back of $\cU$ under $(\tilde\varphi, F):E_T\to E\times Z$, and define
		\begin{align}
			V:=T\setminus\pi_T(E_T\setminus\cU_T').
		\end{align}
		By properness of $\pi_T$, we see that $V$ is an open neighborhood of $t\in T$. Moreover, $V$ satisfies
		\begin{align}
			F_\varphi(V)\subset\cN(f_t,\cU)
		\end{align}
		which completes the proof that $F_\varphi$ is continuous.
	\end{proof}
	\begin{corollary}[$C^0$ Topology Commutes with Pullbacks]\label{c0-bc}
		Let $T$ be a metrizable space, $\varphi:T\to B$ a continuous map and $\pi_T:E_T\to T$ to base change of 
		$\pi$ along $\varphi$. Then, the set map
		\begin{align}\label{c0-bc-iso}
			C^0(E_T/T,Z)&\to C^0(E/B,Z)\times_BT\\
			(t,f:{(E_T)}_t\to Z)&\mapsto((\varphi(t),f:E_{\varphi(t)}\to Z),t)
		\end{align}
		is a homeomorphism over $T$.
	\end{corollary}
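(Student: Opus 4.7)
The plan is to reduce the statement to the universal property of the $C^0$ topology established in Lemma \ref{c0-univ}. First I will verify that Lemma \ref{c0-univ} applies to $C^0(E_T/T,Z)$ as well: since $T$ is metrizable, $E_T = E\times_B T$ is metrizable (as a subspace of $E\times T$), and the projection $\pi_T\colon E_T\to T$ is continuous, proper and open, as all three properties are stable under base change.

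Next, I will show that the presheaves $\fC^0(E_T/T,Z)$ and $\fC^0(E/B,Z)\times_B T$ on $(\text{Top})$ are naturally isomorphic. Unwinding the definitions, for any topological space $S$, an element of $\fC^0(E_T/T,Z)(S)$ is the data of a continuous map $\psi\colon S\to T$ together with a continuous map $F\colon S\times_T E_T\to Z$. On the other hand, an element of $(\fC^0(E/B,Z)\times_B T)(S)$ is a continuous map $\psi\colon S\to T$, a continuous map $\alpha\colon S\to B$ satisfying $\alpha=\varphi\circ\psi$, and a continuous map $F'\colon S\times_B E\to Z$. Since $\alpha$ is determined by $\psi$, and by the associativity of fibre products $S\times_B E = S\times_T (T\times_B E) = S\times_T E_T$, the two sets of data coincide naturally in $S$.

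Combining this natural isomorphism of presheaves with two applications of Lemma \ref{c0-univ} yields, for every test space $S$, a natural bijection $\Hom(S,C^0(E_T/T,Z)) \cong \Hom(S,C^0(E/B,Z)\times_B T)$. By the Yoneda lemma this produces a homeomorphism between the two spaces, and tracing through the correspondence of Lemma \ref{c0-univ} confirms that the resulting homeomorphism is precisely the map (\ref{c0-bc-iso}) and is compatible with the projections to $T$.

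There is no substantive analytic obstacle here; the entire argument is a formal consequence of having extracted the correct universal property in Lemma \ref{c0-univ}, and the only real task is the bookkeeping of matching up the two presheaves via the associativity of fibre products. This is precisely the payoff of formulating Lemma \ref{c0-univ} in the language of representability rather than working directly with the Hausdorff metric of Lemma \ref{dH}.
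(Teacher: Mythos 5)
Your proof is correct and follows essentially the same approach as the paper: both reduce the corollary to the universal property of Lemma \ref{c0-univ} by exhibiting a natural isomorphism of presheaves $\fC^0(E_T/T,Z)\cong\fC^0(E/B,Z)\times_B T$ via associativity of fibre products, then invoke the Yoneda lemma. Your additional verification that metrizability, continuity, properness and openness of $\pi$ pass to the base change $\pi_T$ is a helpful detail the paper leaves implicit.
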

	\begin{proof}
		This is a consequence of Lemma \ref{c0-univ}. Indeed, consider the corresponding $T$-morphism of sheaves
		\begin{align}
			\fC^0(E_T/T,Z)\to\fC^0(E/B,Z)\times_BT
		\end{align}
		which, for a topological space $S$, maps the diagram
		\begin{center}
			\begin{tikzcd}
				E_T \arrow[d, "\pi_T"]& \arrow[l, "\tilde\chi"]E_S \arrow[d, "\pi_S"] \arrow[r, "F"]& Z \\
				T & \arrow[l,"\chi"] S &
			\end{tikzcd}
		\end{center}
		to the pair consisting of the diagram
		\begin{center}
			\begin{tikzcd}
				E \arrow[d, "\pi"]& \arrow[l, "\tilde\varphi\circ\tilde\chi"]E_S \arrow[d, "\pi_S"] \arrow[r, "F"]& Z \\
				B & \arrow[l,"\varphi\circ\chi"] S &
			\end{tikzcd}
		\end{center}
		and the map $\chi:S\to T$. This is a bijection (natural in $S$) and so, \eqref{c0-bc-iso} is a homeomorphism.
	\end{proof}
	\subsubsection{Topology of $\fM$}
	Recall the definition of the moduli functor $\fM = \fM_{J,P}^\text{reg}(\pi,X)_\beta$ from Definition \ref{def-mod-functor}. We will now show that the functors $\fM^\text{top}$ and $\fM_\text{base}$, which are derived from $\fM$ (as in \textsection\ref{PShv-rel-Cr}) are representable using the results of \textsection\ref{c0-primer}. This will verify the first two hypotheses of the representability criterion (Proposition \ref{rep-crit}).
	\begin{lemma}\label{top-moduli}
		The functor $\normalfont\fM^\text{top}$ is represented by the subspace $\cZ\subset C^0(\cC/\cS,X)\times E$, endowed with the $C^0$ topology, consisting of regular $((s,f),v)$ satisfying (\ref{inhom-pde}) and $f_*[C_s]=\beta$. As a result, it is metrizable (by Lemma \ref{dH}) and in particular, Hausdorff.
	\end{lemma}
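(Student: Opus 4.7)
The plan is to unpack the definition of $\fM^\text{top}$ and then apply the universal property of the $C^0$ topology from Lemma~\ref{c0-univ}. By definition, for a topological space $T$ we have $\fM^\text{top}(T) = \fM(t'(T)) = \fM(T/T)$, where $T/T$ denotes $T$ as a rel--$C^\infty$ manifold over itself via the identity. The first observation is that, since the relative dimension of $T/T$ is zero, both a rel--$C^\infty$ map $F:\cC_Y/\cC_T\to X$ (with $Y=T$, $\cC_Y=\cC_T$, and the rel--$C^\infty$ structure on $\cC_Y/\cC_T$ pulled back trivially from $T/T$) and a rel--$C^\infty$ map $w:Y/T\to E$ reduce to mere continuous maps $F:\cC_T\to X$ and $w:T\to E$. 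Consequently, $\fM(T/T)$ is naturally identified with the set of continuous data $(\varphi:T\to\cS,\;F:\cC_T\to X,\;w:T\to E)$ (where $\cC_T:=\cC\times_\cS T$) satisfying, at each point $t\in T$, equation~(\ref{inhom-pde}), the regularity condition, and the homology class condition on the restriction $F|_{C_{\varphi(t)}}$.

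Next I would invoke Lemma~\ref{c0-univ}, applied to the proper flat analytic family $\pi:\cC\to\cS$ (which is continuous, proper, and open between metrizable spaces, as required by the hypotheses of~\textsection\ref{c0-primer}), in order to identify the pair $(\varphi,F)$ with a single continuous map $T\to C^0(\cC/\cS,X)$. Combining this with $w$ gives a continuous map $T\to C^0(\cC/\cS,X)\times E$, and conversely. Since the pointwise conditions above depend only on the image of each $t\in T$ in $C^0(\cC/\cS,X)\times E$, asking them to hold for every $t\in T$ is exactly asking that the map factor through the subset $\cZ$. This produces a natural bijection between $\fM^\text{top}(T)$ and the set of continuous maps $T\to\cZ$, where $\cZ$ carries the subspace topology; this is precisely the asserted isomorphism $\fM^\text{top}\simeq h_\cZ$ of presheaves on $(\text{Top})$.

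Finally, metrizability of $\cZ$ follows from Lemma~\ref{dH}: the almost complex manifold $X$, the complex manifold $\cS$, and the family $\cC$ are all metrizable, so $C^0(\cC/\cS,X)$ is metrizable, and the product with the finite dimensional vector space $E$ together with passage to the subspace $\cZ$ preserves metrizability. The only step that requires genuine care is the identification of rel--$C^\infty$ maps out of a trivially structured object $T/T$ (and its pullback $\cC_T/\cC_T$) with continuous maps; once this is settled, the lemma reduces almost mechanically to Lemma~\ref{c0-univ}, and I do not anticipate a real analytic obstacle.
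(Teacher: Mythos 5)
Your proposal is correct and follows essentially the same route as the paper: both identify $\fM^\text{top}(T)=\fM(T/T)$, observe that the rel--$C^\infty$ data over the relative-dimension-zero object $T/T$ is just continuous data, and then invoke the universal property of the $C^0$ topology (Lemma \ref{c0-univ}) to obtain the natural bijection with continuous maps into $\cZ$, with metrizability supplied by Lemma \ref{dH}. Your explicit remarks on why rel--$C^\infty$ maps out of $T/T$ and $\cC_T/\cC_T$ reduce to continuous maps, and on the hypotheses (properness, openness, metrizability) needed for \textsection\ref{c0-primer}, merely spell out what the paper leaves implicit.
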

	\begin{proof}
		Define a morphism $\Psi:\fM^\text{top}\to\cZ$ of sheaves as follows. For any topological space $T$, $\Psi$ maps an element of $\fM^\text{top}(T) = \fM(T/T)$, given by the diagram
		\begin{center}
		\begin{tikzcd}
			\cC \arrow[d, "\pi"] &\cC_T \arrow[d] \arrow[l] &\cC_T \arrow[d] \arrow[l,"\text{id}"] \arrow[r, "F"]&X \\
			\cS & T \arrow[l, "\varphi"] &  T \arrow[l, "\text{id}"] \arrow[r, "w"] & E
		\end{tikzcd}
		\end{center}
		to the map
		\begin{align}
		    F_{\varphi,w}:T&\to\cZ\subset C^0(\cC/\cS,X)\times E \\
		    t&\mapsto ((\varphi(t),F|_{C_{\varphi(t)}}),w(t))
		\end{align}
		which is continuous by Lemma \ref{c0-univ}. Conversely, given any continuous map $T\to C^0(\cC/\cS,X)\times E$ with image contained in $\cZ$, we can use the universal property in Lemma \ref{c0-univ} to get a diagram as above belonging to $\fM^\text{top}(T)$. This defines the inverse $\cZ\to\fM^\text{top}$ to $\Psi$.
	\end{proof}
	\begin{lemma}\label{base-moduli}
		The functor $\normalfont\fM_\text{base}$ is represented by $\cS$ (the base of the family $\pi:\cC\to\cS$). The ``structure map" $\normalfont s''_\fM:\fM^\text{top}\to\fM_\text{base}$ is given by $((s,f),v)\mapsto s$
		and is separated.
	\end{lemma}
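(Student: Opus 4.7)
The plan is to verify all three assertions by directly unwinding the definitions given in \textsection\ref{PShv-rel-Cr}, leveraging Lemma \ref{top-moduli} and Lemma \ref{dH}.

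First I would compute $\fM_\text{base}(T) = \fM(b'(T)) = \fM(\emptyset/T)$ from Definition \ref{def-mod-functor}. Substituting $Y = \emptyset$ into the diagram makes $\cC_Y = \cC_T \times_T \emptyset$ empty, so both $F$ and $w$ are the unique maps out of $\emptyset$, and every ``for each $y \in Y$'' condition (regularity of the linearization, the equation \eqref{inhom-pde}, the homology class constraint) is vacuous. The only remaining datum is the continuous map $\varphi: T \to \cS$, giving a natural bijection $\fM_\text{base}(T) \simeq \Hom_{\text{Top}}(T, \cS) = h_\cS(T)$, which exhibits $\fM_\text{base}$ as representable by $\cS$.

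Next I would identify the structure map via Remark \ref{top-base}. The morphism $s''_\fM(T): \fM(T/T) \to \fM(\emptyset/T)$ is pullback along the canonical $S$-morphism $\emptyset/T \to T/T$, which on a diagram $(\varphi, F, w) \in \fM^\text{top}(T)$ retains only $\varphi$. Specializing to $T = \cZ$ with the tautological element produced by the isomorphism $\fM^\text{top} \simeq \cZ$ of Lemma \ref{top-moduli} shows that $s''_\fM: \cZ \to \cS$ is precisely the projection $((s,f),v) \mapsto s$. For separatedness, I would observe that $\cZ$ sits as a subspace of $C^0(\cC/\cS, X) \times E$, which is metrizable (by Lemma \ref{dH} together with the standard metric on the finite-dimensional $E$) and therefore Hausdorff; thus $\Delta_\cZ$ is closed in $\cZ \times \cZ$, and its intersection with the topological subspace $\cZ \times_\cS \cZ \subset \cZ \times \cZ$ remains closed there. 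Since diagonals are automatically topological embeddings, this produces the required closed embedding $\cZ \hookrightarrow \cZ \times_\cS \cZ$. No significant obstacle is expected, as the entire statement is a consequence of formal unwinding once Lemmas \ref{top-moduli} and \ref{dH} are in hand.
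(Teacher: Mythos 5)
Your proposal is correct and follows essentially the same route as the paper: unwind $\fM(\emptyset/T)$ to get the identification with $h_\cS$, read off $s''_\fM$ as the projection, and deduce separatedness from the Hausdorffness of $\fM^\text{top}$ established in Lemma \ref{top-moduli}. You spell out the Hausdorff-implies-separated step in a bit more detail than the paper does, but the content is the same.
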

	\begin{proof}
        For a topological space $T$, an element of the set $\fM_\text{base}(T) = \fM(\emptyset/T)$ is, by definition, a diagram
		\begin{center}
		\begin{tikzcd}
			\cC \arrow[d, "\pi"] &\cC_T \arrow[d] \arrow[l] &\emptyset \arrow[d] \arrow[l] \arrow[r]&X \\
			\cS & T \arrow[l, "\varphi"] &  \emptyset \arrow[l] \arrow[r] & E
		\end{tikzcd}
		\end{center}
		which we can map to $\varphi\in h_\cS(T)$. This gives a natural isomorphism $\fM_\text{base}\xrightarrow{\simeq}h_\cS$. Note that under $s''_\fM$, an element of $\fM^\text{top}(T)$ given by a diagram as in the proof of Lemma \ref{top-moduli} maps to the element of $\fM_\text{base}(T)$ given by the above diagram. In other words, $s''_\fM$ maps $((\varphi,F),w)$ to $\varphi$. Finally, $s''_\fM$ is separated since $\fM^\text{top}$ is Hausdorff by Lemma \ref{top-moduli}.
	\end{proof}
	\begin{remark}
	    In view of Lemmas \ref{top-moduli} and \ref{base-moduli}, we will henceforth identify $\fM_\text{base}$ with $\cS$, $\fM^\text{top}$ with the subset of $C^0(\cC/\cS,X)\times E$ consisting of $((s,f),v)$ satisfying \eqref{inhom-pde} and $f_*[C_s]=\beta$ (and equipped with the $C^0$ topology) and $s''_\fM$ with the coordinate projection $((s,f),v)\mapsto s$.
	\end{remark}
	\subsubsection{Compatibility of $\fM$ with Base Change}
	\begin{lemma}\label{bc}
		Suppose $\Phi:\cS'\to\cS$ is an analytic map of complex manifolds. Let $\fM':=\normalfont\fM^\text{reg}_{J,P'}(\pi',X)_\beta$, where $\pi',P'$ are the pullbacks of $\pi,P$ along $\Phi$. Then, the natural map from $\fM'$to the base change $\fM_{\cS',\Phi}$ is an isomorphism of sheaves.
	\end{lemma}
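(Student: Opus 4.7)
The plan is to directly unwind the definitions of $\fM'$ and $\fM_{\cS',\Phi}$ on a test object $Y/T$ in $(C^\infty/\cdot)$ and observe that the two resulting sets of data carry identical content. Since both sides are sheaves on $(C^\infty/\cdot)$, constructing a natural bijection of sections that is functorial in $Y/T$ will yield the desired isomorphism of sheaves.

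First, I would unpack $\fM_{\cS',\Phi}(Y/T)$: using the formula $\fX_{\fS,\Phi} = (\fS \circ b) \times_{(\fX_\text{base} \circ b)} \fX$ together with the identification $\fM_\text{base} \simeq h_\cS$ from Lemma \ref{base-moduli}, a section over $Y/T$ is a pair consisting of a continuous map $\varphi' : T \to \cS'$ and an element of $\fM(Y/T)$ whose base map $T \to \cS$ agrees with $\Phi \circ \varphi'$. Explicitly, this is a diagram involving $\cC_T \to T \to \cS$ factoring through $\cS'$ via $\varphi'$, a rel--$C^\infty$ map $F:\cC_Y \to X$, and a rel--$C^\infty$ weight map $w: Y \to E$, with $F|_{C_{\Phi(\varphi'(q(y)))}}$ satisfying \eqref{inhom-pde} and being regular for every $y \in Y$.

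Next, I would use the definition of the pullback family to identify this with $\fM'(Y/T)$. Since $\cC' = \cC \times_\cS \cS'$ as analytic spaces, the universal property of fibre products gives a canonical homeomorphism $\cC'_T = \cC \times_\cS \cS' \times_{\cS'} T \simeq \cC \times_\cS T = \cC_T$ whenever $T$ is equipped with the composite map $\Phi \circ \varphi' : T \to \cS$, and analogously for $\cC'_Y$. Thus the diagram defining an element of $\fM'(Y/T)$ is, up to these canonical identifications, precisely the pair (diagram for $\fM(Y/T)$ with base map $\Phi\circ\varphi'$) together with the lift $\varphi': T \to \cS'$. Moreover, by construction $P' = P \circ (\tilde\Phi \times \text{id}_X)$ where $\tilde\Phi: \cC' \to \cC$ covers $\Phi$, so the perturbed Cauchy--Riemann equation \eqref{inhom-pde} for $P'$ on $C'_{\varphi'(q(y))}$ agrees with the one for $P$ on $C_{\Phi(\varphi'(q(y)))}$, and the linearized operators at corresponding points coincide, so regularity is equivalent on the two sides.

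The bijection of sections on each $Y/T$ constructed this way is manifestly natural in $Y/T$ (pullbacks of diagrams commute with the canonical identifications), producing the claimed isomorphism $\fM' \xrightarrow{\sim} \fM_{\cS',\Phi}$. No deep step is involved: the only part that requires a small verification is the identification of the perturbation term and the linearization under base change, and this is automatic from the definitions of $P'$ and $\pi'$ as pullbacks. The proof is therefore essentially a diagram chase, and I would expect to write it out as such.
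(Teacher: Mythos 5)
Your proposal is correct and matches the paper's approach: the paper's proof is simply "direct verification, virtually identical to that of Corollary \ref{c0-bc}," i.e.\ exactly the functorial diagram chase on test objects $Y/T$ that you spell out. Writing out the identifications $\cC'_T \simeq \cC_T$ and the matching of the perturbation/regularity conditions is precisely the intended verification.
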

	\begin{proof}
		{Direct verification. The proof is virtually identical to that of Corollary \ref{c0-bc}.}
	\end{proof}
	\begin{remark}\label{non-reg}
		Notice that Lemmas \ref{top-moduli}, \ref{base-moduli} and \ref{bc} have obvious analogues (which are still true)		even if we expanded the definition of $\fM$ to include non-regular triples $((s,f),v)$ satisfying (\ref{inhom-pde}) and $f_*[C_s]=\beta$. This observation will be useful in \textsection\ref{et-chart} (and also in formulating Lemma \ref{top-chart}). Of course, without restricting to regular triples, we could not have expected to represent $\fM$ by a rel--$C^\infty$ manifold.
	\end{remark}
	\section{Construction of $\fM$}\label{sc-ift}
	We will now show (using some inputs from polyfold theory) that $\fM$ is locally representable near each point of $\fM^\text{top}$. This will verify the third hypothesis of the representability criterion (Proposition \ref{rep-crit}) and thus will complete the proof of Theorem \ref{rep}.
	\subsection{Preparations}
	Let $(s,u,v_0)\in \fM^\text{top}$ be a point. We wish to show that $\fM$ is representable near $(s,u,v_0)$.\\\\
	\noindent For simplicity of notation, we will denote the curve $C_s$ as $C$. Let $\nu_1,\ldots,\nu_d$ be an enumeration of the nodes of the curve $C$ (where $d\ge 0$ is an integer). Define $N=\{u(\nu_i)\;|\;1\le i\le d\}$. Choose $h$ to be a Riemannian metric $X$ with the following property. There exist pairwise disjoint open subsets $\{U_\iota\}_{\iota\in N}$ in $X$ and a smooth function	$\varphi:U\to \bR^{2n}$ (where $2n = \dim X$ and $U := \coprod_\iota U_\iota$) such that the following hold:
	\begin{itemize}
		\item For each $\iota\in N$, we have $\iota\in U_\iota$ and $\varphi_\iota:=\varphi|_{U_\iota}$ is diffeomorphism		onto the unit ball $B^{2n}\subset\bR^{2n}$ centred at the origin with $\varphi_\iota(\iota)=0$.
		\item For each $\iota\in N$, the metric $h$ is standard in the coordinates $\varphi_\iota$, i.e.,
		\begin{align}
			h|_{U_\iota} = \varphi_\iota^*\left(\sum_{i=1}^{2n} dx_i^2\right).
		\end{align}
	\end{itemize}
	Usually, when speaking of points (or tangent vectors etc) in $U_\iota$, we will suppress $\varphi_\iota$ from the notation, and instead simply identify $U_\iota$ with $B^{2n}$ using $\varphi_\iota$. Next, for each $1\le j\le d$, there are two non-singular branches of $C$ meeting at $\nu_j$. Choose parametrizations $\psi_j:\bD\to C$ and $\psi_j':\bD\to C$ (with respective inverses $z_j$, $z_j'$ mapping the node $\nu_j$ to $0$). Let $\gamma:(0,\infty)\times S^1\to\bD^\times$ be the analytic isomorphism given by
	\begin{align}
		\gamma(s,t) = e^{-(s+it)}.
	\end{align}
	We call the two maps $\gamma_j,\gamma_j':(0,\infty)\times S^1\to C\setminus\{\nu_j\}$ as the $j^\text{th}$ pair of cylindrical parametrizations, where we set $\gamma_j:=\psi_j\circ\gamma$ and $\gamma_j':= \psi'_j\circ\gamma$. The $(0,\infty)\times S^1$-valued inverses of $\gamma_j$ and $\gamma_j'$ are denoted by $(s_j,t_j)$ and $(s_j',t_j')$ respectively and we call them the cylindrical coordinates on the $j^\text{th}$ pair of ends of $C\setminus\{\nu_1,\ldots,\nu_d\}$. Denote by $C'$ the complement (in $C$) of the union of the images of $\psi_j$ and $\psi_j'$ ($1\le j\le d$). We can, and shall, assume that the images of the maps $u\circ\psi_j$ (and $u\circ\psi_j'$) all lie in a compact subset of $U=\coprod_\iota U_\iota$.\\\\
	Having chosen a suitable metric and local coordinates on $X$, in addition to cylindrical parametrizations of the ends of $C\setminus\{\nu_1,\ldots,\nu_d\}$, we now proceed to fix coordinates on $\cS$ near the point $s$ (recall that $C$ is the fibre of $\pi$ over the point $s\in\cS$). We will do this by replacing $\pi$ by a particular versal deformation of $C$ which we describe in the following two paragraphs.
	\\\\
	Let $j_0$ denote the almost complex structure on $C$. Suppose that $V$ is a finite dimensional real vector space and let $\{j(v)\}_{v\in V}$ be a smooth family of almost complex structures on $C$ such that	$j(0)=j_0$ and $j(v)\equiv j_0$ over a neighborhood of the closure of the cylindrical ends. Also, let $G$ denote the space of all $\alpha = (\alpha_1,\ldots,\alpha_d)\in\bC^d$ with each $|\alpha_j|<1/4$.	Now, given $\alpha\in G$ and $v\in V$, define the prestable curve $(C_\alpha,j_\alpha(v))$ as follows. We first equip $C$ with the almost complex structure $j(v)$, and then we perform the	following cut-and-paste operations for each $1\le j\le d$.
	\begin{itemize}
		\item If $\alpha_j = 0$, we do not do anything.
		\item If $\alpha_j = e^{-(R_j + i\theta_j)}\ne 0$ for some (unique) point $(R_j,\theta_j)\in (0,\infty)\times S^1$, then we remove the node $\nu_j$ and the copies of $[R_j,\infty)\times S^1$ from either side of $\nu_j$, and then identify the remaining parts $(0,R_j)\times S^1$ of the ends according to the following equations
		\begin{align}
			s_j + s_j' &= R_j\\
			t_j + t_j' &= \theta_j.
		\end{align}
	\end{itemize}
	Note that these cut-and-paste operations can alternatively be described as replacing the locus $z_jz_j' = 0$ in $C$ with the locus $z_jz_j'=\alpha_j$ (with $|z_j|<1$ and $|z_j'|<1$). Also observe that $(C_0,j_0(0))$ is the same as the original curve $C$. By choosing $V$ and the family $\{j(v)\}_{v\in V}$ suitably, we can ensure that the following family is a versal deformation of the curve $C$.\\\\
	We take $\cS' := G\times V$ and define $\cC'$ by gluing the family $C'\times \cS'\to\cS'$ (where we use the almost complex structure $j(v)$ on $C'$ over a point $(\alpha,v)\in G$) along its	boundary (in the obvious way) with the family $q\times\text{id}_V:\cU_G\times V\to G\times V$, where we set
	\begin{align}
		\cU_G &:= \{(z_j,z_j')_{1\le j\le d}\in(\bar\bD^2)^d\;|\; (z_j\cdot z_j')_{1\le j\le d}\in G\}\\
		q(z_1,z_1',\ldots,z_d,z_d') &:= (z_1\cdot z_1',\ldots,z_d\cdot z_d').
	\end{align}
	Note that this family has the property that the fibre over $(\alpha,v)\in G\times V$ is given by $(C_\alpha,j_\alpha(v))$.
	\begin{lemma}[Reduction to the Versal Case]
	\label{reduction-versal}
	    We can choose $V$, $\{j(v)\}_{v\in V}$ and a map
	    \begin{align}
	        P':E\to C^\infty(\cC'^\circ\times X,\Omega^{0,1}_{\cC'^\circ/\cS'}\boxtimes_\bC T_X)
	    \end{align}
	    with support away from the nodes in $\pi':\cC'\to\cS'$ and proper over $\cS'$, following the notation of \eqref{inhom-perturbation}, so that the following assertion is true. If $\fM' = \fM^{\normalfont\text{reg}}_{J,P'}(\pi',X)_\beta$ is representable near $((0,0),u,v_0)$, in the sense of hypothesis $\normalfont\text{(3)}$ of Proposition \ref{rep-crit}, then $\fM=\fM^{\normalfont\text{reg}}_{J,P}(\pi,X)_\beta$ is also representable near $(s,u,v_0)$.
	\end{lemma}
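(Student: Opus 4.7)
The plan is to exploit the universal property of versal deformations of prestable curves to realize $\pi$ locally around $s$ as a pullback of the model family $\pi'$, and then to extend the given perturbation $P$ to a compatible perturbation $P'$ on $\pi'$. First, I would use standard deformation theory for prestable curves to choose $V$ to be a finite-dimensional real vector space and $\{j(v)\}_{v\in V}$ a smooth family of almost complex structures on $C$ (with $j(0)=j_0$ and $j(v)\equiv j_0$ near the closure of the cylindrical ends) so that the resulting family $\pi':\cC'\to\cS'=G\times V$ is a versal deformation of $C$ at $(0,0)$. The dimension of $V$ and the choice of $\{j(v)\}$ are dictated by the requirement that the tangent map to $\pi'$ at $(0,0)$ surject onto the space of first-order deformations of $C$. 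By versality, after possibly shrinking $\cS$ to an open neighborhood $U$ of $s$, there is a holomorphic classifying map $\Phi:U\to\cS'$ with $\Phi(s)=(0,0)$ together with a compatible isomorphism $\pi|_U\cong\Phi^*\pi'$ of proper flat analytic families.

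Next, I would construct $P'$ so that $(\Phi|_{U_0})^*P' = P|_{U_0}$ under the above identification for a smaller neighborhood $U_0\subset U$ of $s$. Via the identification, $P|_U$ pulls back to a smooth family, indexed by $E$, of $(0,1)$-forms defined on the portion of $\cC'^\circ\times X$ lying over $\Phi(U)\subset\cS'$ and supported away from the nodes. I would extend this smoothly in the $\cS'$-directions transverse to $\Phi$ (using, for instance, a local tubular neighborhood of $\Phi(U_0)$ in $\cS'$, or more elementarily a local extension theorem for smooth functions), and then multiply by a bump function on $\cS'$ supported in a small neighborhood $W$ of $(0,0)$ to obtain $P':E\to C^\infty(\cC'^\circ\times X,\Omega^{0,1}_{\cC'^\circ/\cS'}\boxtimes_{\bC} T_X)$ whose image has support proper over $\cS'$, as required by \eqref{inhom-perturbation}. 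The compactness of $\overline{U_0}$ and the support condition on $P$ ensure such an extension exists and agrees with $P|_{U_0}$ after possibly further shrinking.

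Finally, the identifications $(\pi|_{U_0},P|_{U_0})\cong((\Phi|_{U_0})^*\pi',(\Phi|_{U_0})^*P')$, together with (the obvious extension of) Lemma \ref{bc} to the inhomogeneous perturbation, yield a natural isomorphism $\fM|_{U_0}\xrightarrow{\simeq}\fM'_{U_0,\Phi|_{U_0}}$ of sheaves over $U_0$, under which $(s,u,v_0)\in\fM^{\text{top}}$ corresponds to $((0,0),u,v_0)\in\fM'^{\text{top}}$. Given local representability of $\fM'$ near $((0,0),u,v_0)$ in the sense of hypothesis (3) of Proposition \ref{rep-crit}, Lemma \ref{prop-rep}(b)(2) delivers local representability of the base change $\fM'_{U_0,\Phi|_{U_0}}$ near the corresponding point, hence of $\fM$ near $(s,u,v_0)$. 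The main obstacle is the extension step in the middle paragraph: balancing the support condition, the smoothness across the versal directions, and the compatibility under the (possibly non-embedding) classifying map $\Phi$. The existence of the versal deformation of the stated form is standard deformation-theoretic input, and the base change argument is essentially formal once these identifications are in place.
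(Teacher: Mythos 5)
Your overall strategy matches the paper's: choose $V$ and $\{j(v)\}_{v\in V}$ to make $\pi'$ a versal deformation of $C$, obtain a holomorphic classifying map $k:\cS\to\cS'$ with an isomorphism $\cC\cong k^*\cC'$, transfer $P$ to a perturbation $P'$ on $\pi'$, and conclude via Lemma~\ref{bc} and Lemma~\ref{prop-rep}(b)(2). The final base-change step is fine.

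However, there is a genuine gap at the extension step, and you flag it yourself without closing it. Versality alone produces a classifying map $k$ that is in general \emph{not an embedding} (it need not be injective, and need not even be an immersion). Your plan to push $P|_{U_0}$ forward to a family ``on the portion of $\cC'^\circ\times X$ lying over $k(U_0)$'' and then extend transversally via a tubular neighborhood of $k(U_0)$ in $\cS'$ breaks down precisely in the non-embedded case: if $k(x_1)=k(x_2)$ with $x_1\ne x_2$, there is no well-defined pushforward unless $P(x_1)=P(x_2)$, and if $k$ is not immersive there is no tubular neighborhood to speak of. So the constraint $(k|_{U_0})^*P' = P|_{U_0}$ need not be solvable without further work. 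The paper's resolution is a simple but essential trick you omit: enlarge the deformation parameter space. Pick a local holomorphic chart $\psi:\cS\to W$ with $\psi(s)=0$, set $V'=V\oplus W$, and pull the versal family back along the projection $G\times V'\to G\times V$; this is still versal, and the new classifying map $k'(x)=(k(x),\psi(x))$ is automatically an immersion near $s$, hence (after shrinking) a closed holomorphic embedding. Once $k$ is replaced by such an embedding, your extension-by-cutoff argument for $P'$ goes through, and the rest of the proof is as you describe.
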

	\begin{proof}
	    To begin with, we may choose $V$ and $\{j(v)\}_{v\in V}$ to be such that $\pi'$ is a versal deformation of $C$ at $(0,0)$. Thus, after shrinking $\cS$ around $s$, we get a (possibly non-unique) holomorphic classifying map $k:\cS\to\cS'$ and an isomorphism $\Phi:\cC\xrightarrow{\simeq}k^*\cC'$ with $k(s) = (0,0)$ and $\Phi|_{C_s} = \text{id}_{C}$. Changing the choice of $V$ and $\{j(v)\}_{v\in V}$ if necessary, we can assume that $k$ is an embedding. To see this, we argue as follows. After shrinking $\cS$ around $s$ if necessary, choose a local holomorphic coordinate chart $\psi:\cS\to W$, with $W$ a finite dimensional complex vector space and $\psi(s) = 0$. Now, define $V' = V\oplus W$ and consider the family obtained by pulling back $\pi'$ along the coordinate projection $p:G\times V'\to G\times V = \cS'$. Then, $p^*\cC'\to G\times V'$ is also a versal deformation of $C$ at $(0,0)$ and we can take the classifying map $k':\cS\to G\times V' = \cS'\times W$ given by
	    \begin{align}
	        k'(x) := (k(x),\psi(x)).
	    \end{align}
	    This map $k'$ is now an immersion near $s$ and thus, by shrinking $\cS$ and $\cS'$ if necessary, we may assume that $k'$ is a closed holomorphic embedding of $\cS$ into $\cS'$. (Now, replace $V,\cS',k$ by $V',G\times V',k'$ respectively.)\\\\
	    Now that we have embedded $\pi$ as a closed analytic subfamily of $\pi'$ via the maps $k:\cS\hookrightarrow\cS'$ and $\Phi:\cC\hookrightarrow\cC'$, we can extend the map $P$ of \eqref{inhom-perturbation} to a map $P'$, by using a cutoff function to ensure that its support stays proper over $\cS'$ and away from the nodes. With these definitions in place, let us assume that $\fM' = \fM^\text{reg}_{J,P'}(\pi',X)_\beta$ is representable near $((0,0),u,v_0)$, in the sense of hypothesis (3) of Proposition \ref{rep-crit}. Lemma \ref{bc}, combined with Lemma \ref{prop-rep}(b)(2), then shows that $\fM$ is also representable near $(s,u,v_0)$.
	\end{proof}
	\begin{remark}
	    We can (and do) shrink the cylindrical ends on $C$ to ensure that $P'(\cdot)$ is supported on the subset $C'\subset(C_\alpha,j_\alpha(v))$ for all $(\alpha,v)\in G\times V$. In view of Lemma \ref{reduction-versal}, we only need to prove the representability of $\fM^\text{reg}_{J,P'}(\pi',X)_\beta$ near $((0,0),u,v_0)$ in order to deduce the representability of $\fM$ near $(s,u,v_0)$. We will therefore replace $\pi:\cC\to\cS,s,P$ by $\pi':\cC'\to\cS',(0,0),P'$ for the remainder of \textsection\ref{sc-ift}.
	\end{remark}
	\begin{remark}
		For the rest of \textsection\ref{sc-ift}, we will use the coordinates $(\alpha,v)$ on the base of the family $\pi:\cC\to\cS$. Most of the statements we make about $(\alpha,v)\in\cS$ will be true only for $(\alpha,v)$ sufficiently close to the basepoint $(0,0)\in\cS$. 
		To enhance readability, we will avoid pointing this out explicitly in each instance.
	\end{remark}
	\noindent After a brief recollection of some relevant polyfold notions in \textsection\ref{polyfold-notation}, we will set up the problem of finding a chart for $\fM^\text{top}$ near $(s,u,v_0)$ in \textsection\ref{ift-chart} as an instance of the polyfold implicit function theorem from \cite{HWZ-ImpFuncThms}.
	\subsection{Interlude: Notational Conventions for Polyfolds}\label{polyfold-notation}
		For the discussion in \textsection\ref{polyfold-notation} and \textsection\ref{ift-chart}, we assume that the reader has a working knowledge of basic concepts of the theory of polyfolds developed in \cite{HWZ-Splicing,HWZ-ImpFuncThms,HWZ-Fred,HWZ-NewModels,HWZ-PFBasics,HWZ-GWbook}. See \cite[\textsection4--\textsection6]{FFGW-polyfold-survey} for an introduction and for further references. Here, we explain our conventions for gluing/anti-gluing maps and gluing profiles and how these differ (slightly) from
		what is customary in the polyfold literature.
		\subsubsection{Gluing ($\oplus,\hat\oplus$) and Anti-gluing ($\ominus,\hat\ominus$)}
		\label{gluing-map-def}
		Let's explain how our notations for gluing/anti-gluing differ from the standard polyfold literature. To this end, let $Y$ and $Y'$ be two copies of $(0,\infty)\times S^1$. As mentioned in the opening paragraph of \textsection\ref{ift-chart}, we take $S^1=\bR/2\pi\bZ$ (contrary to $S^1=\bR/\bZ$, which is customary in polyfold literature). We denote the obvious $\bR\times S^1$-valued coordinates on $Y$ and $Y'$ by $(s,t)$ and $(s',t')$ respectively. Given $\alpha\in\bC$ with $0<|\alpha|<1/4$ and writing $\alpha=e^{-(R+i\theta)}$, we define the infinite cylinder $Z_\alpha$ by identifying $Y$ and $Y'$ along $(0,R)\times S^1\subset Y$ and $(0,R)\times S^1\subset Y'$ using the relation
		\begin{align}
			\label{plumb1} s + s' &= R\\
			\label{plumb2} t + t' &= \theta.
		\end{align}
		We let $Y_\alpha$ be the subset of $Z_\alpha$ corresponding to the image of $(0,R)\times S^1\subset\Gamma$ in $Z_\alpha$. Notice that there are obvious $\bR\times S^1$-valued holomorphic coordinates $(s,t)$ and $(s',t')$ on $Y_\alpha$ (which also extend to $Z_\alpha$ in the		obvious way) which are related by equations (\ref{plumb1})--(\ref{plumb2}).\\\\ 
		Next, choose $\beta:\bR\to[0,1]$ to be a smooth function satisfying
		\begin{align}
			\label{cutoff1} \beta(s)\equiv 1&\text{   for }s\le -1\\
			\label{cutoff2} \beta(s) + \beta(-s) \equiv 1&\text{   for all }s\in\bR\\
			\label{cutoff3} -1\le\beta'(s)< 0&\text{   for all }s\in(-1,1).
		\end{align}
		For each $R>0$, define the function $\beta_R:\bR\to[0,1]$ to be the translate $\beta_R(s):=\beta(s-R/2)$. We will be using $\beta$ to define the \textbf{gluing} ($\oplus,\hat\oplus$) and \textbf{anti-gluing} ($\ominus,\hat\ominus$) maps below.\\\\
		Now, given continuous $\bR^N$-valued functions $\xi,\xi'$ on $Y,Y'$ respectively, we define the functions $\oplus_\alpha(\xi,\xi'),\ominus_\alpha(\xi,\xi')$ on		$Y_\alpha,Z_\alpha$ respectively by
		\begin{align}
			\oplus_\alpha(\xi,\xi')(s,t) &= \beta_R(s)\cdot\xi(s,t) + \beta_R(s')\cdot\xi'(s',t')\\
			\ominus_\alpha(\xi,\xi')(s,t) &= -\beta_R(s')\cdot(\xi(s,t) - [\xi,\xi']_R) + \beta_R(s)\cdot(\xi'(s',t')-[\xi,\xi']_R) 
		\end{align}
		where we set $[\xi,\xi']_R:=\frac12\left(\int_{S^1}\xi(\frac R2,t)\,\frac{dt}{2\pi} + \int_{S^1}\xi'(\frac R2,t')\,\frac{dt'}{2\pi}\right)$. Similarly, given continuous $\bR^N$-valued functions $\eta,\eta'$ on $Y,Y'$ 
		respectively, we define the functions $\hat\oplus_\alpha(\eta,\eta'),\hat\ominus_\alpha(\eta,\eta')$ on $Y_\alpha,Z_\alpha$ respectively by
		\begin{align}
			\hat\oplus_\alpha(\eta,\eta')(s,t) &= \beta_R(s)\cdot\eta(s,t) + \beta_R(s')\cdot\eta'(s',t')\\
			\hat\ominus_\alpha(\eta,\eta')(s,t) &= -\beta_R(s')\cdot\eta(s,t) + \beta_R(s)\cdot\eta'(s',t').
		\end{align}
		This definition deviates from the usual one in the polyfold literature in two ways. The first, less important, way is that we glue two copies of $(0,\infty)\times S^1$ to define $Y_\alpha,Z_\alpha$ rather than gluing $\bR_+\times S^1$ (with coordinates $s,t$) and $\bR_-\times S^1$ (with coordinates $s',t'$) using the relations $s-s'= R$ and $t-t'=\theta$. The second, more substantial, difference is that we make use of	the \textbf{logarithmic gluing profile} in our definition of gluing/anti-gluing as opposed to the \textbf{exponential gluing profile} (which is the one used in polyfold theory to guarantee sc-smoothness of the splicings/retractions induced by gluing/anti-gluing).		This point is explained in more detail below in \textsection\ref{gluing-profile-def}.
		\subsubsection{Gluing Profile (Logarithmic vs Exponential)}
		\label{gluing-profile-def}
		Central to defining the gluing/anti-gluing maps (as well as the domains $Z_\alpha$ and $Y_\alpha$ themselves) is the correspondence between $\alpha\in\bD$ and the parameters $(R,\theta)\in\bR_+\times S^1$ that go into defining the gluing relation between $(s,t)$ and $(s',t')$. Here, $\bD$ is the unit disc in $\bC$ centred at $0\in\bC$. The correspondence
		\begin{align}
			R &= -\log|\alpha|\\
			e^{-i\theta} &= \frac\alpha{|\alpha|}
		\end{align}
		which we used in the previous subsection is called the \textbf{logarithmic gluing profile}. On the other		hand, the \textbf{exponential gluing profile} is given by the correspondence $\alpha\mapsto (R_\text{exp},\theta_\text{exp})$, where
		\begin{align}
			R_\text{exp} &= 2\pi\cdot(e^{\frac1{|\alpha|}}-e)\\
			e^{-i\theta_\text{exp}} &= \frac\alpha{|\alpha|}.
		\end{align}
		The factor of $2\pi$ appears in the definition of $R_\text{exp}$ because in our convention $S^1=\bR/2\pi\bZ$	(instead of $\bR/\bZ$).
		\begin{definition}
			Define the map $\varphi_\text{exp}:\bD\to\bD$ to be the homeomorphism defined by $0\mapsto 0$ and
			\begin{align}
				0\ne\alpha\mapsto\varphi_\text{exp}(\alpha):=e^{-(R_\text{exp}+i\theta_\text{exp})}
			\end{align}
			where $R_\text{exp}$ and $\theta_\text{exp}$ are defined as above.
		\end{definition}
		\begin{lemma}
			$\normalfont\varphi_\text{exp}$ is a smooth map with $D^m\normalfont\varphi_\text{exp}(0)=0$ for all 
			$m\ge 0$.
		\end{lemma}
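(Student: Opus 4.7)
The plan is to reduce the assertion to the classical fact that $g(r) := e^{-2\pi e^{1/r}}$ (extended by $g(0)=0$) is a smooth function on $[0,\infty)$ whose derivatives all vanish at the origin, and then propagate this through two elementary operations: dividing by $r$ and multiplying by a smooth factor.

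Away from the origin, $\varphi_\text{exp}$ is a composition of smooth maps (using $|\alpha|$, which is smooth on $\bC\setminus\{0\}$, and the complex exponential), so smoothness there is immediate. The real content lies in the behavior at $\alpha=0$. The first step is to rewrite
\begin{align}
    \varphi_\text{exp}(\alpha) = e^{2\pi e} \cdot g(|\alpha|) \cdot \frac{\alpha}{|\alpha|} = e^{2\pi e} \cdot \frac{g(|\alpha|)}{|\alpha|} \cdot \alpha,
\end{align}
valid for $\alpha\ne 0$, where $g(r) := e^{-2\pi e^{1/r}}$ for $r>0$. The singular factor $\alpha/|\alpha|$ is absorbed into the ratio $g(|\alpha|)/|\alpha|$, and the remaining multiplicative $\alpha$ is globally smooth. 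So it suffices to show that $\psi(\alpha) := g(|\alpha|)/|\alpha|$, extended by $\psi(0)=0$, is smooth on $\bC$ with all derivatives vanishing at $0$.

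Next I would verify that $g$ itself is smooth on $[0,\infty)$ with $g^{(k)}(0)=0$ for all $k\ge 0$. By induction on $k$, one checks that for $r>0$ each derivative $g^{(k)}(r)$ has the form $P_k(1/r,\,e^{1/r})\,g(r)$ for some polynomial $P_k$; since $g(r)=e^{-2\pi e^{1/r}}$ decays faster than any reciprocal polynomial in $r$ \emph{and} faster than any polynomial in $e^{1/r}$ as $r\to 0^+$, each $g^{(k)}(r)\to 0$. Combined with the mean value theorem, this shows $g$ is $C^\infty$ at $0$ with all derivatives zero. The same property then passes to $g(r)/r$: writing $g(r)/r = \int_0^1 g'(rs)\,ds$ gives a $C^\infty$ expression on $[0,\infty)$, and differentiating under the integral sign at $r=0$ produces $\int_0^1 s^k g^{(k+1)}(0)\,ds=0$ for every $k$.

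Finally, the radial function $\psi(\alpha) = (g(|\alpha|)/|\alpha|)$ is a composition of the profile $r\mapsto g(r)/r$ (which is $C^\infty$ on $[0,\infty)$ and flat at $0$) with $|\alpha|$. Smoothness of $\psi$ on $\bC\setminus\{0\}$ is clear. At the origin, the argument is a routine induction: the partial derivatives of $\psi$ away from $0$ are bounded by a polynomial in $1/|\alpha|$ times derivatives of the profile evaluated at $|\alpha|$, hence tend to $0$ faster than any power of $|\alpha|$; difference quotients at $0$ converge to $0$ for the same reason. Thus $\psi$ is $C^\infty$ on $\bC$ with $D^m\psi(0)=0$ for all $m$, and multiplying by the smooth factor $\alpha$ preserves this by the Leibniz rule, yielding $D^m\varphi_\text{exp}(0)=0$ for all $m\ge 0$. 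The only mildly subtle point is the bookkeeping in step (3) --- showing that a radial function whose profile is flat at $0$ genuinely descends to a $C^\infty$ function on $\bR^2$ with flat derivatives --- but this is standard and follows from the explicit polynomial bounds above.
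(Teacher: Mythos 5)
Your proof is correct and takes essentially the same approach as the paper, which simply cites smoothness away from $0$ and the rapid decay of $r\mapsto e^{-2\pi(e^{1/r}-e)}$ as $r\to 0^+$; your writeup fills in the details the paper declares "straightforward."
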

		\begin{proof}
			Smoothness away from $0$ is trivial (since $\varphi_\text{exp}$ even gives a diffeomorphism from	$\bD\setminus\{0\}$ to itself). Smoothness near $0$ is a straightforward consequence of the fact that $r\mapsto e^{-2\pi(e^{1/r}-e)}$ decays to $0$ very fast as $r\to 0$.
		\end{proof}
		\begin{definition}[Exponential Smooth Structure]\label{exp-smooth-structure}
			Let $G\subset\bD^d$ be an open neighborhood of $0\in\bD^d$ for some integer $d\ge 1$. We let $G_\text{exp}$ be the smooth manifold with underlying topological space $G$ and a global smooth chart $\psi_{G,\text{exp}}:G_\text{exp}\to \bC^d$ given by the formula
			\begin{align}
				(\alpha_1,\ldots,\alpha_d) \mapsto (\varphi_\text{exp}^{-1}(\alpha_1),\ldots,
				\varphi_\text{exp}^{-1}(\alpha_d)). 
			\end{align}
			Explicitly, this means that a function $f:G_\text{exp}\to\bR$ is smooth if and only if the function $f\circ\psi_{G,\text{exp}}^{-1}$ (defined on an open subset of $\bC^d$) is smooth in the usual sense. We will refer to $G_\text{exp}$ sometimes as \textbf{$G$ equipped with the exponential smooth structure}.
		\end{definition}
		\begin{corollary}\label{exp-id-smooth}
			The identity map $\normalfont G_\text{exp}\to G$ is smooth.
		\end{corollary}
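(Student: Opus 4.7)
The plan is to unravel the two smooth structures at play and reduce the claim to the preceding lemma. The codomain $G$ carries the standard smooth structure inherited from the inclusion $G\subset\bC^d$, while the domain $G_\text{exp}$ carries the smooth structure declared by insisting that the global chart $\psi_{G,\text{exp}}$ be a diffeomorphism onto its image in $\bC^d$. Since both smooth structures are described by explicit global charts into $\bC^d$, checking smoothness of the identity map reduces to a single coordinate computation.

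With the two charts in hand, I would write the identity $\text{id}:G_\text{exp}\to G$ in these coordinates as the composition of $\psi_{G,\text{exp}}^{-1}$ with the inclusion $G\hookrightarrow\bC^d$, which is the map
\[
(\beta_1,\ldots,\beta_d)\longmapsto(\varphi_\text{exp}(\beta_1),\ldots,\varphi_\text{exp}(\beta_d)).
\]
By the preceding lemma applied componentwise, each factor $\varphi_\text{exp}:\bD\to\bD$ is smooth (in fact, infinitely flat at $0$), so the displayed map is smooth in the ordinary sense. This gives the desired smoothness of $\text{id}:G_\text{exp}\to G$.

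There is no genuine obstacle here; the corollary is a direct consequence of the preceding lemma combined with the definition of $G_\text{exp}$. I would note in passing that the statement is intentionally one-sided: the reverse identity map $G\to G_\text{exp}$ is \emph{not} smooth, since its coordinate expression involves $\varphi_\text{exp}^{-1}$, which is only continuous (not even once differentiable) at $0$. In other words, the exponential smooth structure on $G$ is strictly finer than the standard one, and this asymmetry is precisely the point of introducing it in the polyfold setup.
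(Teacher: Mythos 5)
Your proof is correct and follows essentially the same route as the paper, which simply cites the definition of $G_\text{exp}$ and the smoothness of $\varphi_\text{exp}$; you have merely spelled out the coordinate computation (and your aside about the non-smoothness of the reverse map $G\to G_\text{exp}$ is accurate but not needed).
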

		\begin{proof}
			Immediate from the definition of $G_\text{exp}$ and the smoothness of $\varphi_\text{exp}$.
		\end{proof}
		\begin{remark}
		    A perturbation term $P$ as in \eqref{inhom-perturbation} will be smooth with respect to the logarithmic (i.e. standard) smooth structure on the space $G$ of gluing parameters. Corollary \ref{exp-id-smooth} will allow us to conclude that $P$ is smooth with respect to $G_\text{exp}$ also. The reason we need to consider $G_\text{exp}$ is because the sc-smoothness results we need from polyfold theory work for the exponential smooth structure but not for the standard (logarithmic) smooth structure. 
		\end{remark}
	\subsection{Functional Analytic Setup and Implicit Function Theorem}\label{ift-chart}
	Fix a large integer $k$ (say, $k\ge 10$) and a constant $0<\delta<1$ and define the weighted Sobolev spaces
	\begin{align}
		&W^{k,2,\delta}(C,u^*T_X)\\
		&W^{k-1,2,\delta}(\tilde C,\Omega^{0,1}_{\tilde C,j_0}\otimes_\bC u^*T_X)
	\end{align}
	as in \cite[\textsection B.4]{Pardon-VFC}. For simplicity, we will refer to these weighted Sobolev spaces in the sequel as $H^{k,\delta}$ and $H^{k-1,\delta}$ respectively. (We caution the reader these same spaces are denoted by the slightly different notations $H^{k,\delta}_c(\cdots)$ and $H^{k-1,\delta}(\cdots)$ respectively in \cite{HWZ-GWbook}.	Following \cite{HWZ-GWbook}, we will view $H^{k,\delta}$ and $H^{k-1,\delta}$ as sc-Hilbert spaces in the usual way -- by picking a strictly increasing sequence $(\delta_m)_{m\ge 0}\subset(0,1)$ with $\delta=\delta_0$ and taking the $m^\text{th}$ scales to be $H^{k+m,\delta_m}$ and	$H^{k+m-1,\delta_m}$ respectively. Note that in \cite{HWZ-GWbook}, the authors take $S^1 = \bR/\bZ$, while	we take $S^1 := \bR/2\pi\bZ$. In our version, the first positive eigenvalue of $i\frac{\partial}{\partial\theta}$ on 
	$S^1$ is $1$.)
	\\\\
	Let $\Gamma(p,q)$, for sufficiently close by points $p,q\in X$, denote a smooth $J$-linear parallel transport from $q$ to $p$	(of a tangent vector) along the unique shortest $h$-geodesic joining the two of them. For example, we may take $\Gamma$ to be the parallel transport of the connection $\nabla^J:=\frac12(\nabla + J^{-1}\circ\nabla\circ J)$ on $T_X$, where $\nabla$ is the Levi-Civita connection of the metric $h$. For $v\in V$ small enough, we have the canonical isomorphism
	\begin{align}
		I(v) := \frac12(1 - j_0\circ j(v)):(T_{\tilde C},j(v))\to (T_{\tilde C},j_0)
	\end{align}
	of complex vector bundles. We denote by $I_\alpha(v):(T_{\tilde C_\alpha},j_\alpha(v))\to (T_{\tilde C_\alpha},j_\alpha(0))$ the corresponding isomorphism induced by $I(v)$ when it descends to the normalization $\tilde C_\alpha$ of $C_\alpha$ (which is constructed from $\tilde C$ by an obvious cut-and-paste operation). Note that $I_0(v) = I(v)$ and	that $I_\alpha(v)$ is an isomorphism for $v\in V$ small enough and is $\equiv1$ on the ends/necks of $\tilde C_\alpha$.
	\\\\For the rest of \textsection\ref{ift-chart}, we will assume the reader has a working knowledge of polyfold theory (e.g. at the level of \cite[\textsection4--\textsection6]{FFGW-polyfold-survey}). In some places, our notation is slightly different from what is standard in polyfold theory (these differences have been pointed out and explained in \textsection\ref{polyfold-notation}).
	\\\\
	Now, given $\alpha\in G$, $v\in V$, $\xi\in H^{k,\delta}$, all small enough, we define $\eta:=F(\alpha,v,\xi)\in H^{k-1,\delta}$ to be the unique section satisfying the following equations. (Note that on the cylindrical ends of $C$, $u^*T_X$ is trivialized by the chosen local Euclidean coordinates	$(U,\varphi)$ on $X$ and $\Omega^{0,1}_{\tilde C,j_0}$ is trivialized by $ds-i\cdot dt$. This enables us to regard $\xi,\eta$ as functions on the ends and thus, we can apply the gluing/anti-gluing maps to them.)
	\begin{align}
		\label{filled1}
		\Gamma(\oplus_\alpha\exp_u\xi,\oplus_\alpha u)\circ\hat\oplus_\alpha\eta\circ I_\alpha(v)
		&= \delbar_{J,j_\alpha(v)}\oplus_\alpha\exp_u\xi\\
		\label{filled2}
		\hat\ominus_\alpha\eta &= \delbar_{J(0)}\ominus_\alpha\xi.
	\end{align}
	We now clarify what the notation $\delbar_{J(0)}$ means. We can think of $\ominus_\alpha\xi$ as a collection of functions defined on copies of $\bR\times S^1$, one for each $1\le j\le d$ for which $\alpha_j\ne 0$. For any particular $1\le j\le d$ with $\alpha_j\ne 0$, we think of the function $\ominus_\alpha\xi$ restricted to the $j^\text{th}$ copy of $\bR\times S^1$ as taking values in	$T_{u(\nu_j)}X$ -- using the coordinate system $\varphi_\iota$ with $\iota := u(\nu_j)$ -- and on this vector function, $\delbar_{J(0)}$ acts as the usual Cauchy Riemann operator on the $\bC$-vector space $T_{u(\nu_j)}X$. (Equations (\ref{filled1})--(\ref{filled2}) are just equation (4.5) in \cite{HWZ-GWbook} rewritten slightly, following our conventions, as detailed in \textsection\ref{polyfold-notation}. Also, what's called	$\delta(\alpha,v)$ in \cite[Equation (4.5)]{HWZ-GWbook} is denoted as $I_\alpha(v)$ here.)	It is a fundamental fact that the resulting map
	\begin{align}\label{polyfold-delbar}
		F:G_\text{exp}\times V\times H^{k,\delta}\to H^{k-1,\delta}
	\end{align}
	is {\textbf{sc-smooth}}\footnote{\cite[Definition 4.17]{FFGW-polyfold-survey}} (here, $G_\text{exp}$ is just $G$ equipped with the exponential smooth structure -- see Definition \ref{exp-smooth-structure}) and is, in fact, also 
	{\textbf{sc-Fredholm}}\footnote{\cite[Definition 6.15]{FFGW-polyfold-survey}} (see \cite[Propositions 4.7, 4.8]{HWZ-GWbook} for proofs). Note that we are abusing notation slightly here as $F$ is defined only on a sufficiently small ball around $(0,0,0)\in G\times V\times H^{k,\delta}$. Next, given $e\in E$ and small enough $\alpha\in G$, $v\in V$, $\xi\in H^{k,\delta}$, we define $\eta := Q(e)(\alpha,v,\xi)\in H^{k-1,\delta}$ to be the unique section satisfying the following equation.
	\begin{align}\label{filled-perturb}
		\Gamma(\exp_u\xi,u)\circ\eta\circ I(v)
		 &= P(e)(\alpha,y,\cdot,\exp_u\xi(\cdot)).
	\end{align} 
	We explain the notation here. Recall that $P:E\to C^\infty(\cC^\circ\times\cS,\Omega^{0,1}_{\cC^\circ/\cS}	\boxtimes_\bC T_X)$ is an $\bR$-linear map, and we are evaluating it above at a point of $\cC^\circ\times X$. We have used the obvious parametrization $G\times V\times C'\times X\to \cC^\circ\times X$ (whose image contains the support of $P(e)$, for each $e\in E$). We can show (e.g. using	\cite[Theorem 13.5]{Palais-FNGA}) that the map $Q$ is in fact	classically smooth when viewed as a map $E\times G\times V\times H^{k,\delta_m}\to H^{k,\delta_{m+1}}$ where	for each $m\ge 0$, where $(\delta_m)_{m\ge 0}\subset(0,1)$ is the sequence of weights chosen to make $H^{k,\delta}$ an sc-Hilbert space. The weights don't really matter in the proof of this assertion since the sections $P(e)$ are supported away from the ends of the curve. (Crucially, we also observe that $Q$ remains smooth even if we replace $G$ by $G_\text{exp}$, since the identity map $G_\text{exp}\to G$ is smooth by Corollary \ref{exp-id-smooth}.) In particular, the resulting map
	\begin{align}
		Q : E\times G_\text{exp}\times V\times H^{k,\delta}\to H^{k-1,\delta} 
	\end{align}
	is an {\textbf{$\text{sc}^+$ map}}\footnote{\cite[Definition 6.10]{FFGW-polyfold-survey}}. Combining these two maps, we can define the map
	\begin{align}\label{non-linear-CR-section}
		\tilde F:(G_\text{exp}\times V)\times (E\times H^{k,\delta})&\to H^{k-1,\delta}\\
		(\alpha,v,e,\xi)&\mapsto F(\alpha,v,\xi) + Q(v_0+e)(\alpha,v,\xi)
	\end{align}
	which is again sc-smooth and sc-Fredholm (by the stability of sc-Fredholm sections under $\text{sc}^+$ perturbations, see \cite[Theorem 3.9]{HWZ-ImpFuncThms} or \cite[Theorem 6.21]{FFGW-polyfold-survey}). 
	\begin{remark}[Zero locus of $\tilde F$]
	    Note that a sufficiently small $(\alpha,v,e,\xi)$ is a zero of the map $\tilde F$ if and only if $((\alpha,v),\oplus_\alpha\exp_u\xi,v_0+e)$ satisfies (\ref{inhom-pde}) and $\ominus_\alpha\xi=0$. This follows from \eqref{filled1}, \eqref{filled2}, \eqref{filled-perturb} and the fact that $\delbar_{J(0)}\ominus_\alpha\xi = 0$ if and only if $\ominus_\alpha\xi = 0$ for $\xi\in H^{k,\delta}$.
	\end{remark}
	\noindent From the assumption that the point $(s,u,v_0)$, which corresponds to $(0,0,0,0)\in(G_\text{exp}\times V)\times (E\times H^{k,\delta})$, is regular (i.e., transversally cut-out), we deduce that the linearized operator
	\begin{align}
		D_0:=D_{3,4}\tilde F(0,0,0,0): E\oplus H^{k,\delta}\to H^{k-1,\delta}
	\end{align}
	is surjective. (Here, $D_{3,4}$ denotes the partial linearization with respect to the variables $e$ and $\xi$).	Define $K:=\ker D_0\subset E\oplus H^{k,\delta}$. We will now pick a sc-complement for $K$ in $E\oplus H^{k,\delta}$. The choice of this specific complement (which appeared earlier in \cite[\textsection B.7]{Pardon-VFC}) will be crucial to the proof of representability, so we explain it in detail.
	\begin{lemma}[Choice of sc-complement]\label{complement-props}
	    We can find a linear subspace $E'\subset E$, finitely many points $x_1,\ldots,x_\ell\in C'$ and linear subspaces $V_i\subset T_{u(x_i)}X$ for $i=1,\ldots,\ell$ such that the linear map
	    \begin{align}
		    \label{comp1} L:E\oplus H^{k,\delta}&\to E/E'\oplus\bigoplus_{1\le i\le\ell} (T_{u(x_i)}X)/V_i\\
		    \label{comp2} (e,\xi)&\mapsto (e\normalfont\text{ (mod }E'),\xi(x_1)\normalfont\text{ (mod }V_1),\ldots,\xi(x_\ell)\normalfont\text{ (mod }V_\ell))
	    \end{align}
	    restricts to an isomorphism on the linear subspace $K$. The subspace $\ker L\subset E\oplus H^{k,\delta}$ then defines an sc-complement of $K\subset E\oplus H^{k,\delta}$ and $D_0:\ker L\to H^{k-1,\delta}$ is an sc-isomorphism.
	\end{lemma}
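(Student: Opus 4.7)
The statement is essentially a finite-dimensional linear algebra reduction combined with unique continuation for the linearized Cauchy--Riemann operator. Since $D_0$ is sc-Fredholm and surjective, $K=\ker D_0$ is finite-dimensional, and we want to produce finitely many linear functionals of the two allowed types (projection to $E/E'$, or evaluation modulo $V_i$ at a point $x_i$) whose restrictions to $K$ form a basis of $K^*$.

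I would split $K$ into its ``$E$-part'' and ``$\xi$-part'' as follows. Let $K_0:=\{(0,\xi)\in K\}$ and let $\pi_E:E\oplus H^{k,\delta}\to E$ denote the projection. Choose $E'\subset E$ to be any linear complement of the finite-dimensional subspace $\pi_E(K)\subset E$; then $(e,\xi)\mapsto (e\bmod E')$ restricts to a surjection $K\twoheadrightarrow E/E'$ with kernel exactly $K_0$. It therefore suffices to find data $(x_i,V_i)$ with $\sum_i\dim T_{u(x_i)}X/V_i=\dim K_0$ for which the evaluation map $K_0\to\bigoplus_i T_{u(x_i)}X/V_i$ given by $(0,\xi)\mapsto(\xi(x_i)\bmod V_i)_i$ is an isomorphism.

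For this I would invoke unique continuation. Unpacking $\tilde F=F+Q(v_0+e)$, an element $(0,\xi)\in K_0$ satisfies $D_\xi\tilde F(0,0,0,0)\xi=0$, which is a zero-order perturbation of the linearized Cauchy--Riemann operator $D(\delbar_J)_u$ acting on $u^*T_X$-valued sections over the normalization $\tilde C$. By Aronszajn's unique continuation theorem applied on each irreducible component of $\tilde C$, any such $\xi$ that vanishes on a nonempty open subset of some component vanishes identically on that component. Since $C'$ is obtained from $C$ by excising small holomorphic disks around the branches at the nodes, $C'$ contains a nonempty open subset of every irreducible component of $\tilde C$ (after shrinking the disks if necessary). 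Hence the family $\{(0,\xi)\mapsto\mu(\xi(x)):x\in C',\ \mu\in T_{u(x)}X^*\}$ of functionals separates points of the finite-dimensional space $K_0$. Extracting from it a collection $\{(x_j,\mu_j)\}_{j=1}^{\dim K_0}$ whose associated functionals form a basis of $K_0^*$ and setting $V_i:=\bigcap_{j:x_j=x_i}\ker\mu_j$ at each distinct $x_i$ then supplies the required data: the dimension count is automatic since linear independence of the associated functionals on $K_0$ forces linear independence of the covectors $\mu_j$ sharing a common point in $T_{u(x_i)}X^*$.

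The sc-assertions at the end then follow essentially formally. Because $k\ge 10$ yields continuous Sobolev embeddings $H^{k+m,\delta_m}\hookrightarrow C^0$ on every scale, point evaluations at interior points are sc-continuous, so $L$ is sc-smooth linear to a finite-dimensional target and $\ker L$ is a closed sc-subspace of $E\oplus H^{k,\delta}$. Combined with $L|_K$ being an isomorphism onto a target of dimension $\dim K$, this yields $K\cap\ker L=0$ and $K+\ker L=E\oplus H^{k,\delta}$, i.e.\ an sc-splitting. Finally $D_0|_{\ker L}$ is injective (since $K\cap\ker L=0$) and surjective (since $\ker D_0=K$ and $D_0$ is surjective), hence an sc-isomorphism by the open mapping theorem on each scale. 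The main obstacle is the unique continuation input in the middle step, which is a standard consequence of the elliptic nature of the linearized Cauchy--Riemann operator; the rest of the argument is bookkeeping.
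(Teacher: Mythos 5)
Your proposal is correct and takes essentially the same approach as the paper: the same choice of $E'$ (a complement to $\pi_E(K)\subset E$), followed by unique continuation for solutions of the homogeneous linearized Cauchy--Riemann equation to produce evaluation functionals generating $K_0^*$, followed by a routine verification of the sc-assertions. The only difference is presentational: the paper constructs the points $x_1,\ldots,x_\ell$ iteratively (a greedy algorithm, choosing each $x_{i+1}$ so that evaluation is nonzero on the residual kernel $K'_{x_1,\ldots,x_i}$, and then verifying bijectivity of $L|_K$ via a filtration/associated-graded argument), whereas you select a spanning set of evaluation functionals in one pass and extract a basis, then verify bijectivity of $L|_K$ by a dimension count. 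Both are correct and equivalent; neither buys anything over the other.
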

	\begin{proof}
	    Let $E'$ be a complement to the image of the projection $K\to E$. Next, let $K'\subset K$ to be the subspace consisting of all elements of the form $(0,\xi)$. If $K'\ne 0$, then by unique continuation for elements in $K'$ (recall that they satisfy a homogeneous Cauchy-Riemann equation), we can find a point $x_1\in C'$ such that
	    \begin{align}
	        \text{ev}_{x_1}:K'&\to T_{u(x_1)}X \\
	        (0,\xi)&\mapsto \xi(x_1)
	    \end{align}
	    is nonzero. Choose $V_1\subset T_{u(x_1)}X$ to be a complement of the image of $\text{ev}_{x_1}$. Now, the subspace $K'_{x_1}\subset K'$ consisting of $\xi$ with $\xi(x_1)\in V_1$ satisfies $\dim K'_{x_1}<\dim K'$. If $K'_{x_1}\ne 0$, we repeat the argument to get $x_2\in C'$ and a subspace $V_2\subset T_{u(x_2)}X$ complementary to the (nonzero) image of $\text{ev}_{x_2}:K'_{x_1}\to T_{u(x_2)}X$. Continuing this way, this process must stop in $\ell\le\dim K'$ steps. In this way, we obtain the points $x_1,\ldots,x_\ell$ and the subspaces $V_i\subset T_{u(x_i)}X$ define the map $L$ as in \eqref{comp1}. We can now see that the map $L$ is bijective when restricted to $K\subset E\oplus H^{k,\delta}$ by arguing as follows. Define a descending filtration on the target of \eqref{comp1} by declaring the $i^\text{th}$ filtered piece to be the subspace consisting of the elements with the first $i$ entries equal to zero (for $0\le i\le k$). Pulling this back via $L$ gives filtration on $K$ as well. Now, it suffices to observe that $L|_K$ is a filtered map whose associated graded map is an isomorphism. Moreover, we can regard $L$ as an sc-linear surjection and thus, $\ker L$ is an sc-complement of $K$ and $D_0|_{\ker L}$ is an sc-isomorphism.
	\end{proof}
    \noindent By the sc-implicit function theorem (more precisely, by \cite[Theorem 4.6]{HWZ-ImpFuncThms}), we have an sc-smooth (and in particular, classically smooth -- since $K$ is finite dimensional) map 
	\begin{align}
		\sigma:U_u\subset G_\text{exp}\times V\times K\to\ker L
	\end{align}
	defined on an open neighborhood $U_u$ of $(0,0,0)\in G_\text{exp}\times V\times K$ and a constant $\epsilon>0$, which together satisfy the following properties.
	\begin{itemize}
		\item We have $\sigma(0,0,0)=0$.
		\item The set $Z$ of points $(\alpha,v,e,\xi)\in G_\text{exp}\times V\times E\times H^{k,\delta}$ satisfying	the conditions
		\begin{align}
			\max\{\|\xi\|_{k,\delta},\|e\|,\|\alpha\|,\|v\|\}<\epsilon\\ 
			\tilde F(\alpha,v,e,\xi)=0
		\end{align} 
		is identical to the graph of the map $\sigma:U_u\to\ker L$.
		\item For $\kappa\in K$, $\alpha\in G$ and $v\in V$ with $(\alpha,v,\kappa)\in U_u$, let's denote $\sigma(\alpha,v,\kappa)\in E\oplus H^{k,\delta}$ as $(e_{\alpha,v,\kappa},\xi_{\alpha,v,\kappa})$.		Similarly, denote $\kappa\in E\oplus H^{k,\delta}$ as $(\kappa_E,\kappa_H)$. Then, for all $(\alpha,v,\kappa)\in U_u$, the partial linearization 
		\begin{align}
			D_{\alpha,v,\kappa}:=D_{3,4}\tilde F(\alpha,v,
			\kappa_E+e_{\alpha,v,\kappa},\kappa_H + \xi_{\alpha,v,\kappa}):E\oplus H^{k,\delta}\to H^{k-1,\delta}
		\end{align}
		is a surjective Fredholm map and the projection onto $K = \ker D_0$ (along the chosen sc-complement $\ker L$) 
		induces a linear isomorphism
		\begin{align}\label{kernel-gluing}
			\ker D_{\alpha,v,\kappa}\xrightarrow{\simeq}\ker D_0.
		\end{align}
	\end{itemize}
	\begin{remark}
	    The map $\sigma$ can easily be constructed using standard gluing analysis as in \cite[Appendix B]{Pardon-VFC}, but its smoothness properties are not very evident from this construction. However, by carrying out standard gluing analysis more carefully, it is possible to establish the smoothness of $\sigma$ relative to $G$ (e.g. \cite[Lemma B.8.1]{Pardon-VFC} helps us show that $\sigma$ is ``rel--$C^{1,1}$"). We have instead chosen to take the easier route of directly getting the sc-smoothness of $\sigma$ by invoking \cite[Theorem 4.6]{HWZ-ImpFuncThms}.
	\end{remark}
	We can now define a chart for the moduli space near	the point $(s,u,v_0)$.
	\begin{definition}[Gluing Chart]
		Define the set map
		\begin{align}
			\varphi_{(s,u,v_0)}:U_u&\to\fM^\text{top}\\
			(\alpha,v,\kappa)&\mapsto ((\alpha,v),\oplus_\alpha\exp_u(\kappa_H + 
			\xi_{\alpha,v,\kappa}) ,v_0+\kappa_E+e_{\alpha,v,\kappa}).
		\end{align}
		Clearly, the map $\varphi_{(s,u,v_0)}$ commutes with the projection to $\cS$ (i.e., the projection to $(\alpha,v)$ on the source and the projection $\rho_\cS$ on the target). The isomorphism (\ref{kernel-gluing}) ensures that the triple $\varphi_{(s,u,v_0)}(\alpha,v,\kappa)$ is indeed regular.
	\end{definition}
	\noindent By shrinking $U_u$, we may assume that $U_u$ is a product neighborhood of $(0,0,0)$ of the form $G_u\times V_u\times K_u$, where $G_u,V_u,K_u$ are $\epsilon'$-balls centred at $0$ in $G,V,K$ respectively (for some sufficiently small number $\epsilon'>0$). For notational simplicity in the next definition, let's replace the family $\pi:\cC\to\cS$ by the family $\pi:\cC|_{G_u\times V_u}\to G_u\times V_u$. Thus, in this new notation, we have $\cS = G_u\times V_u$ and $U_u = \cS\times K_u$.
	\begin{definition}[Local Universal Family]
		The \textbf{universal evaluation map} $\text{ev}_{(s,u,v_0)}$ associated to the map $\varphi_{(s,u,v_0)}$ fits into the following diagram
		\begin{center}
		\begin{tikzcd}
			\cC\times K_u \arrow[d, "\pi\times\text{id}_{K_u}"]\arrow[r, "\text{ev}_{(s,u,v_0)}"]& X\\
			U_u &
		\end{tikzcd}
		\end{center}
		and is defined as follows. Given $(z,\kappa)\in \cC\times K_u$, let $(\alpha,v):=\pi(z)\in\cS$. Then, we get the map 
		\begin{align}
			u_{\alpha,v,\kappa}:=\oplus_\alpha\exp_u(\kappa_H+\xi_{\alpha,v,\kappa}):C_\alpha\to X
		\end{align}
		from $(\alpha,v,\kappa)\in U_u$, and we define $\text{ev}_{(s,u,v_0)}(z,\kappa):=u_{\alpha,v,\kappa}(z)$.	Similarly, we define the \textbf{universal obstruction map} $\fo_{(s,u,v_0)}$ associated to $\varphi_{(s,u,v_0)}$ by
		\begin{align}
			\fo_{(s,u,v_0)}:U_u&\to E\\
			(\alpha,v,\kappa)&\mapsto v_0+\kappa_E + e_{\alpha,v,\kappa}.
		\end{align}
	\end{definition}
	\begin{theorem}\label{smooth-chart-map}
		The following commutative diagram (with both squares Cartesian)
		\begin{center}
		\begin{tikzcd}
			\cC \arrow[d,"\pi"]&\arrow[l,"="]\cC \arrow[d,"\pi"]&
			\arrow[l] \cC\times K_u \arrow[d, "\pi\times\text{id}_{K_u}"]\arrow[r, "\normalfont
			\text{ev}_{(s,u,v_0)}"]& X\\
			\cS&\arrow[l,"="]\cS&\arrow[l] U_u \arrow[r,"\fo_{(s,u,v_0)}"]& E
		\end{tikzcd}
		\end{center}
		where the unlabeled arrows are the coordinate projections, defines an element
		\begin{align}
			\normalfont\tilde{\varphi}_{(s,u,v_0)}\in\fM(U_u/\cS)
		\end{align}
		which is a rel--$C^\infty$ lift of the continuous $\cS$-morphism $\normalfont\varphi_{(s,u,v_0)}:		U_u\to\fM^\text{top}$, in the sense of Remark \ref{top-base}. 
	\end{theorem}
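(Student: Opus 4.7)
The plan has two parts: first, verify that the diagram actually defines a valid element of $\fM(U_u/\cS)$; second, confirm that this element is a rel--$C^\infty$ lift of the continuous map $\varphi_{(s,u,v_0)}$.

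For the first part, we must check that for every $(\alpha,v,\kappa)\in U_u$, the triple $((\alpha,v),u_{\alpha,v,\kappa},v_0+\kappa_E+e_{\alpha,v,\kappa})$ is a regular solution to \eqref{inhom-pde} in class $\beta$. Starting from $\tilde F(\alpha,v,\kappa_E+e_{\alpha,v,\kappa},\kappa_H+\xi_{\alpha,v,\kappa})=0$, we unpack equations \eqref{filled1}--\eqref{filled2} together with \eqref{filled-perturb}: the vanishing of the anti-glued piece ensures that the exponentiated section matches across every neck and so produces a well-defined map $u_{\alpha,v,\kappa}:C_\alpha\to X$, while the glued piece is precisely \eqref{inhom-pde} with perturbation datum $v_0+\kappa_E+e_{\alpha,v,\kappa}$. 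Regularity follows at once from \eqref{kernel-gluing}, which asserts that $D_{\alpha,v,\kappa}$ remains surjective; membership in the homology class $\beta$ is preserved by continuity from the basepoint $(s,u,v_0)$.

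For the second part, I must verify that $\fo_{(s,u,v_0)}:U_u/\cS\to E$ and $\normalfont\text{ev}_{(s,u,v_0)}:(\cC\times K_u)/\cC\to X$ are rel--$C^\infty$, and that the underlying continuous maps are exactly $\varphi_{(s,u,v_0)}$ and its composition with $\rho_\cS$. The central input is the sc-smoothness of $\sigma$ on $G_\text{exp}\times V\times K$. Because $K$ is finite-dimensional, sc-smoothness specializes in the $K$-direction to classical smoothness of $\kappa\mapsto\sigma(\alpha,v,\kappa)$, and all iterated $K$-partial derivatives yield jointly continuous maps $G_\text{exp}\times V\times K\to E\oplus H^{k,\delta}$; since $G_\text{exp}$ and $G$ share the same underlying topology, continuity transfers to $G\times V\times K$. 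For $\fo_{(s,u,v_0)}$, whose target $E$ is finite-dimensional, this gives rel--$C^\infty$ directly. For $\normalfont\text{ev}_{(s,u,v_0)}$, fix $z\in\cC$ with $\pi(z)=(\alpha,v)$ and factor $\kappa\mapsto u_{\alpha,v,\kappa}(z)$ as: the smooth map $K\to H^{k,\delta}$, evaluation at $z$ (continuous and smooth in $z$ via the Sobolev embedding $H^{k,\delta}\hookrightarrow C^0$, valid since $k\gg 1$), the pointwise exponential $\exp_u$, and the gluing $\oplus_\alpha$, whose cutoff weights $\beta_R$ depend smoothly on $z$. All of these operations commute with $\kappa$-differentiation and depend jointly continuously on $(z,\kappa)$, yielding rel--$C^\infty$. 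The identification of the induced continuous maps with $\varphi_{(s,u,v_0)}$ and $\rho_\cS$ is then immediate from the definitions.

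The hardest step is bridging sc-smoothness (defined via Banach-scale morphisms) and classical rel--$C^\infty$ smoothness. This works because (a) restriction of an sc-smooth map to a finite-dimensional slice recovers classical smoothness; (b) the exponential vs.\ logarithmic smooth structure on $G$ is invisible at the topological level, so continuity of derivatives in $(\alpha,v)$ passes between $G_\text{exp}$ and $G$; and (c) the Sobolev embedding $H^{k,\delta}\hookrightarrow C^0$ lets us evaluate sections and their iterated $\kappa$-derivatives at varying points of $\cC$ with joint continuity. The one piece of nonstandard bookkeeping is joint continuity of the gluing composition when $z$ lies on the neck of $C_\alpha$---where $z$ genuinely interpolates between two cylindrical ends via the cutoffs $\beta_R$---but this reduces to a standard fact about smoothness of the gluing map in its inputs, of the kind routinely established in classical gluing analysis.
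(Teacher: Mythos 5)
Your overall approach matches the paper's: verify first that each $\varphi_{(s,u,v_0)}(\alpha,v,\kappa)$ is a regular solution in class $\beta$ (the paper already establishes this in the ``Zero locus of $\tilde F$'' remark, (\ref{kernel-gluing}), and the Gluing Chart definition), and then reduce the theorem to rel--$C^\infty$ smoothness of $\fo_{(s,u,v_0)}$ (immediate from smoothness of $\sigma$) and of $\text{ev}_{(s,u,v_0)}$. But two things go wrong in your treatment of $\text{ev}_{(s,u,v_0)}$.

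First, the factorization is in the wrong order: you propose smooth map $K\to H^{k,\delta}$, then \emph{evaluation at $z$}, then $\exp_u$, then $\oplus_\alpha$. This does not typecheck. A section $\xi\in H^{k,\delta}$ lives on $C$, while $z$ is a point of the glued curve $C_\alpha$; on the neck, ``evaluating $\xi$ at $z$'' has no meaning until the gluing has been performed, because the glued map at a neck point is a $\beta_R$-weighted combination of values coming from the two ends of $C$. The correct factorization is the paper's: $e_1:(z,\kappa)\mapsto(z,\kappa_H+\xi_{\pi(z),\kappa})$ followed by $e_2:(z,\xi)\mapsto(z,(\oplus_\alpha\exp_u\xi)(z))$, so that the gluing and the evaluation are fused into a single step $e_2$.

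Second, and more importantly, you dismiss the neck-region analysis as ``a standard fact about smoothness of the gluing map in its inputs, of the kind routinely established in classical gluing analysis.'' That neck analysis \emph{is} the content of the proof. The paper introduces the ``weak rel--$C^\infty$'' formalism (Appendix \ref{wkrel}) precisely to bridge between the Banach-manifold target $H^{k,\delta}$ and the finite-dimensional rel--$C^\infty$ notion, and then proves Lemma \ref{wkrel-ev} by splitting $\cC$ into the interior (Lemma \ref{ev-int}, where Sobolev embedding suffices) and the necks (Lemma \ref{ev-node}). Lemma \ref{ev-node} is not a black-box citation: its proof requires the exponential-decay estimates coming from the weight $\delta$, the reduction to Lemma \ref{wk-linear} (continuity of a fibrewise-linear map implies weak rel--$C^\infty$), and a careful case analysis as $\alpha\to 0$, where $R\to\infty$ and both the cutoff $\beta_R$ and the asymptotic constant $\xi_\infty$ enter nontrivially. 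Your observation (c) — joint continuity of evaluation on the neck — is exactly what Lemma \ref{ev-node} proves, and asserting it rather than proving it is the gap.
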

	\begin{proof}
		We need to prove that the map $\text{ev}_{(s,u,v_0)}$ is rel--$C^\infty$ on $\cC\times K_u$ (relative to the projection onto $\cC$) and that the map $\fo_{(s,u,v_0)}$ is rel--$C^\infty$ on $U_u$	(relative to the projection onto $\cS$). The latter assertion is evident as a consequence of the smoothness of the map $\sigma:U_u\to E\oplus H^{k,\delta}$ (where we use the exponential smooth structure on $G$). For the former assertion, let's begin by noting that the map $U_u\to H^{k,\delta}$ given by $(\alpha,v,\kappa)\mapsto\xi_{\alpha,v,\kappa}$ is smooth (again, when we use the exponential smooth structure on $G$). In the language of Appendix \ref{wkrel}, this implies that the map
		\begin{align}
			e_1:\cC\times K_u\to \cC\times H^{k,\delta}\\
			(z,\kappa)\mapsto(z,\kappa_H + \xi_{\alpha,v,\kappa})
		\end{align}
		where $(\alpha,v):=\pi(z)$, is a weak rel--$C^\infty$ $\cC$-morphism. Define the map
		\begin{align}
			\label{e2} e_2:\cC\times H^{k,\delta}\to\cC\times X\\
			(z,\xi)\mapsto (\oplus_\alpha\exp_u\xi)(z)
		\end{align}
		where again we set $(\alpha,v):=\pi(z)$. This is also a weak rel--$C^\infty$ $\cC$-morphism by Lemma \ref{wkrel-ev}. Thus, the map $e_2\circ e_1$ is also a weak rel--$C^\infty$ $\cC$-morphism by Remark \ref{wk-properties}(3). This implies, by Remark \ref{wk-properties}(2), that $\text{ev}_{(s,u,v_0)}$ is		rel--$C^\infty$ relative to the projection $\cC\times K_u\to\cC$, as desired.
	\end{proof}
	\begin{lemma}[Topological Chart]\label{top-chart}
		$\varphi_{(s,u,v_0)}$ maps a neighborhood of $(0,0,0)\in U_u$ homeomorphically onto a neighborhood in the $C^0$ topology of $(s,u,v_0)$ in the space of all triples $(s',u',v')$ solving (\ref{inhom-pde}), including the non-regular maps. In particular, regular maps are open among all maps solving (\ref{inhom-pde}).
	\end{lemma}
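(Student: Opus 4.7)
The plan is to verify that $\varphi_{(s,u,v_0)}$ is a local homeomorphism from a neighborhood of $(0,0,0)\in U_u$ onto a $C^0$-neighborhood of $(s,u,v_0)$ in the space of \emph{all} triples solving \eqref{inhom-pde}. Once this is established, the final assertion follows because $\varphi_{(s,u,v_0)}$ lands entirely in the regular locus by the isomorphism \eqref{kernel-gluing}, so its image---being $C^0$-open in the space of all solutions---exhibits the regular triples as an open subset.

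Continuity of $\varphi_{(s,u,v_0)}$ is immediate from the construction: the map $U_u\to E\oplus H^{k,\delta}$ given by $(\alpha,v,\kappa)\mapsto(\kappa_E+e_{\alpha,v,\kappa},\kappa_H+\xi_{\alpha,v,\kappa})$ is continuous (even smooth), and the subsequent pregluing $\xi\mapsto\oplus_\alpha\exp_u\xi$ is continuous into the $C^0$ topology on $C^0(\cC/\cS,X)$ via Sobolev embedding (since $k\ge 10$). To produce a continuous inverse on a $C^0$-neighborhood of $(s,u,v_0)$, the key step is a classical \emph{pregluing bijection}: any $u':C_{(\alpha,v)}\to X$ that is sufficiently $C^0$-close to $\oplus_\alpha u$ can be uniquely written as $\oplus_\alpha\exp_u\xi$ for some small $\xi\in H^{k,\delta}$ subject to the side condition $\ominus_\alpha\xi=0$, and this $\xi$ depends continuously on $u'$. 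On the thick part $C'$ away from the neck, simply set $\xi=\exp_u^{-1}u'$. On the neck (in the local coordinates centered at the node), the two conditions on the pair $(\xi(s,t),\xi'(s',t'))$ coming from prescribing $\oplus_\alpha(\xi,\xi')=u'$ and $\ominus_\alpha(\xi,\xi')=0$ form a pointwise $2\times 2$ linear system whose matrix has determinant $\beta_R(s)^2+\beta_R(s')^2>0$, so it is uniquely solvable once the nonlocal averaging constant $[\xi,\xi']_R$ is pinned down; but at $s=R/2$ the two definitions force $\xi=\xi'=u'$, so $[\xi,\xi']_R$ equals the $S^1$-average of $u'$ at $s=R/2$ and is therefore determined by $u'$ itself. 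Matching of the two definitions on the overlap is automatic because $\beta_R\equiv 1$ on the ``outer'' portion of each end. The required $H^{k,\delta}$-regularity of $\xi$ (despite only $C^0$-control on $u'$) follows from elliptic bootstrapping applied to \eqref{inhom-pde}.

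Given the pregluing bijection, we combine it with the observation recorded in the remark following \eqref{non-linear-CR-section}: a triple $((\alpha,v),u',v_0+e)$ solves \eqref{inhom-pde} \emph{and} satisfies $u'=\oplus_\alpha\exp_u\xi$ with $\ominus_\alpha\xi=0$ if and only if $(\alpha,v,e,\xi)$ is a zero of $\tilde F$. By the sc-implicit function theorem, such zeros near the origin are exactly the graph of $\sigma:U_u\to\ker L$, with the $K$-component recovering the parameter $\kappa$. Composing, we see that $\varphi_{(s,u,v_0)}$ is a bijection from a neighborhood of $0\in U_u$ onto the set of all solution triples in a $C^0$-neighborhood of $(s,u,v_0)$, with continuous inverse given by the pregluing decomposition followed by projection onto $K$ along $\ker L$.

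The main obstacle I anticipate is the pregluing bijection itself---both establishing existence and uniqueness of the decomposition $u'=\oplus_\alpha\exp_u\xi$ with $\ominus_\alpha\xi=0$, and controlling the $H^{k,\delta}$-norm of $\xi$ from only $C^0$-data on $u'$. This is standard gluing analysis, but the interplay between the nonlinearity of $\exp_u$, the nonlocality of $[\xi,\xi']_R$, and the patching of neck and thick-part definitions requires care; the Sobolev estimate on $\xi$ additionally relies on elliptic bootstrapping off of \eqref{inhom-pde}, which is available uniformly in small $(\alpha,v,e)$.
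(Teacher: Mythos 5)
Your proposal is correct in structure and shares the same analytic core as the paper's argument, but the paper packages the ``soft'' topology differently: rather than constructing the continuous inverse directly, it invokes the invariance-of-domain-type Lemma B.12.1 of Pardon's \emph{Contractibility\ldots} (which says a continuous injective map from an open subset of $\bR^n$ into a Hausdorff space that sends neighborhoods of $0$ onto neighborhoods of the image point is automatically a local homeomorphism). Thus the paper only needs to check (i) continuity and injectivity of $\varphi_{(s,u,v_0)}$, (ii) metrizability of the $C^0$ topology, and (iii) that $\varphi_{(s,u,v_0)}$ maps neighborhoods onto neighborhoods --- the last being exactly your surjectivity-via-pregluing-decomposition argument, for which the paper cites Proposition B.11.5 of the same reference. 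Your explicit construction of the inverse is also valid: as you note, the $K$-projection factors through $L^{-1}$, which by the specific choice of $L$ in Lemma \ref{complement-props} amounts to evaluating $u'$ at the finitely many interior points $x_1,\ldots,x_\ell$ together with the $E$-component, and this is manifestly $C^0$-continuous. Two small points of care: the core difficulty you flag is correctly identified, but ``elliptic bootstrapping'' alone is not quite enough to get the weighted $H^{k,\delta}$ estimate from $C^0$-control on the long neck --- one also needs the exponential decay estimate for holomorphic maps on long cylinders mapping into small balls (this is what controls the $e^{\delta s}$-weighted norm), which is Proposition B.11.1 in Pardon's reference; and in your pregluing argument the equality ``$\xi=\xi'=u'$'' at $s=R/2$ should read $\xi=\xi'=\exp_u^{-1}u'$ in the local Euclidean chart near the node image.
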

	\begin{proof}
		We will deduce the first statement from \cite[Lemma B.12.1]{Pardon-VFC} by checking the following.
		\begin{itemize}
			\item $\varphi_{(s,u,v_0)}$ is continuous and injective when restricted to a neighborhood of $(0,0,0)\in U_u$.	Indeed, continuity of $\varphi_{(s,u,v_0)}$ follows from Theorem \ref{smooth-chart-map}. For injectivity, we argue as in \cite[Lemma B.10.3]{Pardon-VFC} to see that $\varphi_{(s,u,v_0)}(\alpha,v,\kappa) = \varphi_{(s,u,v_0)}(\alpha',v',\kappa')$ implies that $(\alpha,v)=(\alpha',v')$ and (provided $\kappa, \kappa'$ are small enough) that $\kappa + \sigma(\alpha,v,\kappa) = \kappa' + \sigma(\alpha',v',\kappa')$. Applying $L$ to both sides, this implies that $\kappa=\kappa'$.
			\item The $C^0$ topology on the target is metrizable and therefore, in particular, Hausdorff.
			\item Any neighborhood of $(0,0,0)\in U_u$ maps onto a (not necessarily open) neighborhood of $(s,u,v_0)$		in the $C^0$ topology. This is proved by using the exponential decay estimate (\cite[Proposition B.11.1]{Pardon-VFC}) for the derivatives of pseudoholomorphic curves on long cylinders mapping into small balls. See \cite[Proposition B.11.5]{Pardon-VFC} for the details of this argument.
		\end{itemize}
		Finally, to deduce the second statement, we note that every point in the image of $\varphi_{(s,u,v_0)}$ is regular by the isomorphisms in (\ref{kernel-gluing}).
	\end{proof}
	\noindent In view of this Lemma, we can (and do) replace $U_u$ by a smaller neighborhood of $(0,0,0)$ to assume that the inverse of $\varphi_{(s,u,v_0)}$ is an $\cS$-chart for $\fM^\text{top}$ near $(s,u,v_0)$. By shrinking $K_u, G_u, V_u$, we can still assume that $U_u = G_u\times V_u\times K_u$.	
	\subsection{Proof that $\fM$ is Locally Representable}
	Define the open subset $U_{(s,u,v_0)}\subset\fM^\text{top}$ to be the homeomorphic image of $U_u$ under $\varphi_{(s,u,v_0)}$.
	\begin{theorem}\label{loc-rep}
		The natural transformation
		\begin{align}\label{smooth-chart-map2}
			\normalfont\tilde\varphi_{(s,u,v_0)} : U_u/\cS \to \fM^{U_{(s,u,v_0)}}
		\end{align}
		coming from Theorem \ref{smooth-chart-map} is an isomorphism of contravariant functors from $(C^\infty/\cdot)$ to $\normalfont(\text{Sets})$. Moreover, this isomorphism respects the structure maps to $\cS$.
	\end{theorem}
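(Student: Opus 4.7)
The goal is to show that $\tilde\varphi_{(s,u,v_0)}(Y/T)$ is a bijection for every rel-$C^\infty$ manifold $Y/T$; compatibility with the structure maps to $\cS$ is built into the construction of $\tilde\varphi_{(s,u,v_0)}$ in Theorem \ref{smooth-chart-map}. The plan is to exhibit an explicit inverse using the finite-dimensional evaluation map $L$ from Lemma \ref{complement-props}.

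For injectivity, suppose two rel-$C^\infty$ $\cS$-morphisms $\psi_1,\psi_2 : Y/T \to U_u/\cS$ are sent to the same element of $\fM^{U_{(s,u,v_0)}}(Y/T)$. The induced continuous maps $Y \to \fM^\text{top}$ then agree and factor, by construction of $\tilde\varphi_{(s,u,v_0)}$, as $\varphi_{(s,u,v_0)}$ composed with the underlying continuous maps $Y \to U_u$. Since $\varphi_{(s,u,v_0)}$ is injective on $U_u$ by Lemma \ref{top-chart}, the underlying continuous maps coincide. As $U_u = G_u \times V_u \times K_u$ carries the trivial rel-$C^\infty$ structure over $\cS$, a rel-$C^\infty$ $\cS$-morphism to $U_u/\cS$ is determined by its underlying continuous map, so $\psi_1 = \psi_2$.

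For surjectivity, fix $\Xi = (\varphi, F, w) \in \fM^{U_{(s,u,v_0)}}(Y/T)$ and let $(x_1,\ldots,x_\ell) \subset C'$, $V_i \subset T_{u(x_i)}X$, and $E' \subset E$ be the data supplied by Lemma \ref{complement-props}. Since each $x_i$ avoids the nodes of $C$, it extends to a holomorphic section of $\pi$ on a neighborhood of $(0,0) \in \cS$, whose pullback along $\varphi$ is a rel-$C^\infty$ section $\tilde x_i^Y : Y \to \cC_Y$ over $T$. Define $\kappa : Y \to K_u$ by
\begin{align}
    \kappa(y) := (L|_K)^{-1}\!\bigl(w(y)-v_0,\; \exp_{u(x_1)}^{-1} F(\tilde x_1^Y(y)),\;\ldots,\; \exp_{u(x_\ell)}^{-1} F(\tilde x_\ell^Y(y))\bigr),
\end{align}
where each entry is read modulo its designated subspace $E'$ or $V_i$. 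Each evaluation $F \circ \tilde x_i^Y$ is a rel-$C^\infty$ map $Y/T \to X$ (the composition of the rel-$C^\infty$ map $F$ with a section of $\cC_Y \to Y$ coming from $T$), and both $\exp^{-1}$ and $(L|_K)^{-1}$ are classically smooth, so $\kappa$ is rel-$C^\infty$ over $T$. Paired with $\varphi$, it yields the candidate inverse $\psi_\Xi : Y/T \to U_u/\cS$.

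The only point requiring care is the verification that $\tilde\varphi_{(s,u,v_0)}(\psi_\Xi) = \Xi$. This is a pointwise check: for $y \in Y$ with $(\alpha,v) := \varphi(q(y))$, Lemma \ref{top-chart} furnishes a unique $\kappa^*(y) \in K_u$ with $\varphi_{(s,u,v_0)}(\alpha,v,\kappa^*(y)) = \Xi(y)$, and the task is to show $\kappa(y) = \kappa^*(y)$. Because $\sigma$ lands in $\ker L$,
\begin{align}
    L|_K(\kappa^*(y)) = L\bigl(\kappa^*(y) + \sigma(\alpha,v,\kappa^*(y))\bigr),
\end{align}
and unpacking the right-hand side, using that each $x_i \in C'$ lies outside the gluing neck so $\oplus_\alpha\exp_u$ reduces to $\exp_{u(x_i)}$ at $x_i$, reproduces precisely the tuple appearing in the definition of $\kappa(y)$. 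Hence $\kappa^*(y) = \kappa(y)$, and $\psi_\Xi$ recovers $\Xi$. Naturality of the construction in $Y/T$ is immediate since every ingredient is independent of $Y/T$, completing the proof that $\tilde\varphi_{(s,u,v_0)}$ is an isomorphism of functors over $\cS$.
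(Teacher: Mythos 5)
Your proof is correct and follows essentially the same route as the paper's: the inverse to $\tilde\varphi_{(s,u,v_0)}$ is given by exactly the same formula involving $(L|_K)^{-1}$ applied to $(w(y)-v_0,\exp_{u(x_i)}^{-1}F(\tilde x_i^Y(y)))$, and the verification that this recovers the original element proceeds by the same observation that $\sigma$ lands in $\ker L$ and that $\oplus_\alpha\exp_u$ reduces to $\exp_{u(x_i)}$ at the chosen points $x_i\in C'$. You are somewhat more explicit than the paper about the injectivity step (which the paper leaves to the reader) and about why each $x_i$ extends to a rel-$C^\infty$ section of $\cC_Y\to Y$, but these are expositional elaborations rather than a different argument.
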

	\begin{proof}
		We will construct the inverse of the natural transformation (\ref{smooth-chart-map2}) and leave the verification that it is indeed natural and an inverse to the reader. Thus, let us start with a diagram
		\begin{center}
		\begin{tikzcd}
			\cC \arrow[d, "\pi"] &\cC_T \arrow[d] \arrow[l] &\cC_Y \arrow[d] \arrow[l] \arrow[r, "F"]&X \\
			\cS & T \arrow[l, "\varphi"] &  Y \arrow[l, "q"] \arrow[r, "w"] & E
		\end{tikzcd}
		\end{center}
		such that the induced (continuous) map $F_\varphi:Y\to\fM^\text{reg}_{J,P}(\pi,X)_\beta^\text{top}$ has image $\subset U_{(s,u,v_0)}$. Thus, we obtain a continuous relative morphism $(\Phi,\varphi):Y/T\to U_u/\cS$ with $\Phi:=\varphi_{(s,u,v_0)}^{-1}\circ F_\varphi$.	We shall now show that $\Phi$ is rel--$C^\infty$ to conclude the definition of the inverse natural transformation of (\ref{smooth-chart-map2}).\\\\
		At this point, we ask the reader to revisit the definition of the complement of $K = \ker D_0$	in $E\oplus H^{k,\delta}$. Specifically, we need to recall equations (\ref{comp1}) and (\ref{comp2}), which involve the points $x_1,\ldots,x_\ell\in C'$ and the subspaces $E'\subset E, V_1\subset T_{u(x_1)}X,\ldots,V_\ell\subset T_{u(x_\ell)}X$. \\\\
		Define the function $\Psi:Y\to K$ as follows. Given $y\in Y$, let $x_i(y)\in\cC_Y$ be the copy of the point $x_i$ in the fibre of $\cC_Y\to Y$ over $y$ (for $i=1,\ldots,\ell$). Now, set
		\begin{align}\label{chart-transition}
			\Psi(y):=L^{-1}\left(w(y)-v_0\text{ (mod }E'),\exp_{u(x_1)}^{-1}F(x_1(y))\text{ (mod }V_1),\ldots,
			\exp_{u(x_\ell)}^{-1}F(x_\ell(y))\text{ (mod }V_\ell)\right) \in K.
		\end{align}
		It is easy to see that $y\mapsto x_i(y)$ defines a rel--$C^\infty$ map $Y/T\to\cC_Y/\cC_T$ for each $1\le i\le\ell$, and thus, it follows that $\Psi$ is a rel-$C^\infty$ morphism $Y/T\to K$. We will identify		$\Psi$ and $\Phi$ (i.e., showing $\Phi = (\varphi\circ q,\Psi):Y\to \cS\times K$) which will complete the proof.	Thus, let $y\in Y$ be given. Set $(\alpha,v,\kappa):=\Phi(y)$. We then know that the restriction of $F$ to the fibre over $y$ (of $\cC_Y\to Y$) and $w(y)\in E$ can be identified with 
		\begin{align}
			u_{\alpha,v,\kappa}:=\oplus_\alpha\exp_u(\kappa_H &+ \xi_{\alpha,v,\kappa}):C_\alpha\to X\\
			v_0 + \kappa_E + e_{\alpha,v,\kappa}&\in E.
		\end{align}
		Thus, we have $\exp_{u(x_i)}(\kappa_H(x_i) + \xi_{\alpha,v,\kappa}(x_i)) = F(x_i(y))$ for $1\le i\le\ell$. But, recall that $L(e_{\alpha,v,\kappa},\xi_{\alpha,v,\kappa})=0$ which means that $\xi_{\alpha,v,\kappa}(x_i)\in V_i$ (for $1\le i\le\ell$) and $e_{\alpha,v,\kappa}\in E'$. Combining this with (\ref{chart-transition}), we obtain
		\begin{align}
			L(\kappa) = (L\circ\Psi)(y)
		\end{align}
		and thus, $\kappa = \Psi(y)$, as desired. This completes the proof that $\Phi$ is rel--$C^\infty$.
	\end{proof}
	\subsection{Completing the Proof of Theorem \ref{rep}}
	We use the representability criterion (Proposition \ref{rep-crit}), to argue that $\fM$ is representable. Indeed, Lemma \ref{base-moduli}, Lemma \ref{top-moduli} and Theorem \ref{loc-rep} verify the hypotheses of Proposition \ref{rep-crit}, thus completing the proof of Theorem \ref{rep}.
	\section{Moduli Spaces with Obstruction Bundles}\label{mod-with-ob}
	For some applications of moduli spaces of holomorphic curves,	we also need to consider certain naturally defined vector bundles on them (e.g. ``obstruction bundles"). We show how to construct rel--$C^\infty$ structures on such vector bundles, using only the universal property from Theorem \ref{rep}.
	\subsection{Relative Tangent Bundles}
	\begin{definition}[Rel--$C^r$ Vector Bundles]
		Let $X/S$ be a rel--$C^r$ manifold and suppose $E$ is a topological $\bR$-vector bundle on $X$ of rank $n$. Given two local trivializations of $E$, over open sets $U,V\subset X$, we say that they are \textbf{rel--$C^k$ compatible} for some $0\le k\le r$ if and only if the transition map $U\cap V\to GL(n,\bR)$ is rel--$C^k$ (with source considered as an $S$-manifold). A \textbf{rel--$C^k$ structure on $E$} is a maximal atlas of local trivializations of $E$, which are pairwise rel--$C^k$ compatible. A vector bundle $E$ equipped with a fixed rel--$C^k$ structure will be called a rel--$C^k$ vector bundle over $X/S$. Analogous definitions can be made		for $\bC$-vector bundles.
	\end{definition}
	\begin{remark}
		The total space of a rel--$C^k$ vector bundle $E$ (on $X/S$) naturally admits the structure of a rel--$C^k$ manifold over $S$ (with the structure map being the projection $E\to X\to S$). It's also easy to see that the same is true of associated bundles of $E$, such as $E^*$, $\Lambda^m E$ and $E\otimes V$ (for $V$ a fixed finite dimensional vector space). In particular, in this way, $E\oplus E$ is a rel--$C^k$ manifold over $S$. Moreover, the addition, scalar multiplication and zero section maps
		\begin{align}
			E\oplus E\to E,&\quad (v,w)\mapsto v+w\\
			\bR\times E\to E,&\quad (\lambda,v)\mapsto \lambda v\\
			X\to E,&\quad x\mapsto 0\in E_x
		\end{align}
		are all $S$-morphisms of class rel--$C^k$. We also observe that rel--$C^k$ vector bundles can be pulled back		along morphisms of class rel--$C^m$ (for $k\le m$) to yield a rel--$C^k$ structure on the pullback bundle.
	\end{remark}
	\begin{definition}[Relative Tangent Bundles]
		Suppose $X/S$ is a rel--$C^r$ manifold for some $r\ge 1$. We define the \textbf{relative tangent bundle of $X/S$}, denoted by $T_{X/S}$, to be the rel--$C^{r-1}$ vector bundle on $X/S$ defined as follows.	For a local chart $\cU\subset V_S$ (with $V$ a vector space), we define the relative tangent bundle to be $(T_V)_S|_\cU$ and for a chart transition $\Phi:V_S\to W_S$, we glue these by the differential map $T\Phi:(T_V)_S\to (T_W)_S$. By gluing together these definitions over all the charts for $X/S$ in its maximal atlas, we obtain the rel--$C^{r-1}$ vector bundle $T_{X/S}$ over $X/S$.
	\end{definition}
	\begin{remark}
		Given a rel--$C^k$ map $\varphi:X'/S'\to X/S$ between rel--$C^r$ manifolds (for some $1\le k\le r$), we get		an induced linear bundle map (of class rel--$C^{k-1}$) called the differential
		\begin{align}
			T\varphi: T_{X'/S'}\to \varphi^*T_{X/S}
		\end{align}
		of the map $\varphi$, defined in the obvious way using local charts. Clearly, this differential satisfies		the chain rule. We will sometimes denote $T\varphi$ as either $T_\varphi$ or $d_{X'/S'}\varphi$.
	\end{remark}
	\noindent For the rest of the discussion, we exclusively focus on rel--$C^\infty$ manifolds and rel--$C^\infty$ vector bundles on them.
	\begin{definition}[Relative Submersion]
		A rel--$C^\infty$ $S$-morphism $\varphi:Y'/S\to Y/S$ is called a \textbf{relative submersion} if and only if		$T_\varphi$ is a surjective bundle homomorphism. It is a simple consequence of the implicit function theorem (with continuous dependence on parameters) that if $\varphi$ is a submersion and $Z/S\to Y/S$ is any other rel--$C^\infty$ $S$-morphism, then $(Z\times_YY')/S$ is naturally a rel--$C^\infty$ manifold.
	\end{definition}
	\begin{lemma}
	    If $\varphi:Y'/S\to Y/S$ is a rel--$C^\infty$ relative submersion, then $Y'/Y$ naturally acquires the structure of a rel--$C^\infty$ manifold and the natural sequence of vector bundles
		\begin{align}
			0\to T_{Y'/Y}\to T_{Y'/S}\xrightarrow{T_\varphi} \varphi^*T_{Y/S}\to 0
		\end{align}
		is exact on $Y'$.
	\end{lemma}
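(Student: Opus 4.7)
The plan is to construct rel--$C^\infty$ $Y$-charts on $Y'$ via a relative submersion theorem, and then read the short exact sequence off directly from these adapted local coordinates. Fix $y' \in Y'$ with $y := \varphi(y') \in Y$ lying over $s \in S$. Choose rel--$C^\infty$ charts $\cU' \subset V'_S$ for $Y'/S$ and $\cU \subset V_S$ for $Y/S$, with $\varphi(\cU') \subset \cU$, so that $\varphi$ is represented by $(x', s) \mapsto (F(x', s), s)$ for some rel--$C^\infty$ map $F$. The hypothesis that $T_\varphi$ is a surjective bundle homomorphism is equivalent to the fiber derivative $D_{x'}F(y'_0, s_0)$ being surjective onto $V$; pick a linear splitting $V' = V \oplus W$ so that this derivative restricted to $V$ is the identity.

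Next, I would apply a rel--$C^\infty$ inverse function theorem to straighten $\varphi$. Consider the rel--$C^\infty$ self-map $G(v, w, s) := (F(v, w, s), w, s)$ of an open subset of $V \oplus W \oplus S$; its fiberwise derivative at $(0,0,s_0)$ is the identity, so the ordinary IFT applied fiberwise yields a local inverse $H$. The key point is that $H$ is again rel--$C^\infty$: its fiberwise derivatives can be written, via the chain rule, as polynomial expressions in the fiberwise derivatives of $G$ composed with $H$, and thus inherit continuous dependence on the base. Post-composing our chart of $\cU'$ with $H$ gives another chart in the maximal rel--$C^\infty$ atlas on $Y'/S$, in which $\varphi$ is the coordinate projection $(v, w, s) \mapsto (v, s)$.

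Declare such straightened charts to be $Y$-charts on $Y'$. A straightened chart identifies an open subset of $\cU'$ with an open subset of $W \times \cU$; viewing $\cU$ as an open piece of $Y$, this is a $Y$-morphism of relative dimension $\dim W$ onto an open subset of $W_Y$. Two such straightened charts differ by a rel--$C^\infty$ $S$-diffeomorphism of open subsets of $V \oplus W \oplus S$ that covers the identity on the $V \oplus S$ factor; since the $Y$-coordinates refine the $S$-coordinates, rel--$C^\infty$ dependence over $S$ implies rel--$C^\infty$ dependence over $Y$, and the two charts are rel--$C^\infty$ compatible as $Y$-charts. Collecting these into a maximal atlas endows $Y'/Y$ with the desired rel--$C^\infty$ structure, which is clearly unique with respect to the obvious universal property (any rel--$C^\infty$ $Y$-map from a test space factors through $Y'$ iff it does so after base change to $S$).

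Finally, the tangent sequence is manifest in a straightened chart: $T_{Y'/S}$ is trivialized with fiber $V \oplus W$, $\varphi^* T_{Y/S}$ is trivialized with fiber $V$, and $T_\varphi$ is the projection, while the $Y$-chart just built trivializes $T_{Y'/Y}$ with fiber $W$, sitting inside $T_{Y'/S}$ precisely as $\ker T_\varphi$. Locally the sequence is the tautological split sequence $0 \to W \to V \oplus W \to V \to 0$, and since exactness of a sequence of vector bundles is local, the claim follows. The main obstacle is the rel--$C^\infty$ inverse function theorem invoked in step two; once one verifies that applying the ordinary fiberwise IFT preserves the rel--$C^\infty$ condition, the rest of the argument is bookkeeping in local coordinates.
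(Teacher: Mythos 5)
Your proof is correct and follows essentially the same route as the paper: reduce to local coordinates, use the inverse function theorem with parameters (Lemma \ref{rel-ift} in the paper) to straighten $\varphi$ to a coordinate projection $(v,w,s)\mapsto(v,s)$, and then read both the rel--$C^\infty$ structure on $Y'/Y$ and the exact sequence off the local model. You spell out some points the paper leaves implicit (the rel--$C^\infty$ compatibility of straightened charts as $Y$-charts, and the observation that rel--$C^\infty$ over $S$ implies rel--$C^\infty$ over $Y$ when $Y$-coordinates refine $S$-coordinates), and you correctly flag that the single nontrivial input is the parametrized IFT; the one place your sketch is slightly too quick is in asserting that the fiberwise inverse $H$ is automatically jointly continuous in the base variable before bootstrapping its derivatives via the chain rule---that initial joint continuity is precisely what the contraction-mapping estimate in the paper's Lemma \ref{rel-ift} supplies.
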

	\begin{proof}
	    The question is local on $Y'$ and so, we may assume that $Y'$ is an open neighborhood of $(0,0,s_0)\in V\times W\times S$ while $Y$ is an open neighborhood of $(0,s_0)\in W\times S$ for some vector spaces $V,W$ with $\varphi$ given by
	    \begin{align}
	        \varphi(x,y,s) = (f(x,y,s),s)
	    \end{align}
	    with $f(0,0,s_0)=0$ and the derivative of $f$ at $(0,0,s_0)$ in the $W$ direction given by $\text{id}_W$. Consider the rel--$C^\infty$ map $\Phi:(x,y,s)\mapsto(x,f(x,y,s),s)$, which, by the inverse function theorem with parameters (see Lemma \ref{rel-ift} below), admits a local rel--$C^\infty$ inverse. Changing to these new coordinates, we may safely assume that $\varphi$ is simply given by the projection $(x,y,s)\mapsto(y,s)$. In this local model, the result is apparent.
	\end{proof}
	\begin{lemma}
	    If $\varphi:Y'/S\to Y/S$ and $Z/S\to Y/S$ are rel--$C^\infty$ $S$-morphisms with $\varphi$ a relative submersion, then $(Z\times_YY')/S$ is naturally a rel--$C^\infty$ manifold.
	\end{lemma}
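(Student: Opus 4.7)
The plan is to reduce the assertion to the local normal form established in the previous lemma. Since the question is local on $Y'$ (and hence on $Z\times_YY'$ via its second projection), it suffices to construct a rel--$C^\infty$ structure in a neighborhood of each point of $Z\times_YY'$ and verify that different local choices agree on overlaps.

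Given a point of $Y'$, the previous lemma furnishes rel--$C^\infty$ coordinates in which $\varphi$ is identified with a projection $V\times Y_0\to Y_0$, where $V$ is a finite dimensional $\bR$-vector space and $Y_0\subset Y$ is an open $S$-submanifold. Pulling back along $Z\to Y$, the fiber product $Z\times_Y(V\times Y_0)$ is canonically identified as an $S$-space with $V\times Z_0$, where $Z_0\subset Z$ is the (open) preimage of $Y_0$. Since $Z_0/S$ is a rel--$C^\infty$ manifold and $V$ is a vector space, $V\times Z_0$ carries the obvious product rel--$C^\infty$ structure over $S$. This provides a candidate chart for $Z\times_YY'$ near each of its points.

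To glue, consider two trivializations $\Phi_i:V_i\times Y_0\xrightarrow{\simeq}W\subset Y'$ $(i=1,2)$ of $\varphi$ over a common open $W\subset Y'$. The transition $\tau=\Phi_2^{-1}\circ\Phi_1:V_1\times Y_0\to V_2\times Y_0$ is a rel--$C^\infty$ $S$-morphism commuting with the projection to $Y_0$, so it has the form $\tau(v,y)=(\tau_y(v),y)$ with $(v,y)\mapsto\tau_y(v)$ rel--$C^\infty$ over $S$. Writing $h:Z_0\to Y_0$ for the rel--$C^\infty$ $S$-morphism induced by $Z\to Y$, the associated transition on the local models of the fiber product is
\begin{align}
V_1\times Z_0\to V_2\times Z_0,\qquad (v,z)\mapsto(\tau_{h(z)}(v),z),
\end{align}
which factors as a composition of rel--$C^\infty$ $S$-morphisms $(v,z)\mapsto(v,h(z),z)\mapsto(\tau_{h(z)}(v),z)$ and so is itself rel--$C^\infty$ over $S$ by Lemma \ref{rel-chain}. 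The resulting rel--$C^\infty$ structure on $Z\times_YY'$ is thus well-defined and is evidently natural in $\varphi$ and $Z\to Y$.

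The only substantive point is the cocycle/compatibility check for transitions on the fiber product, and this is essentially formal once the local product form of a relative submersion is in hand; no new analytic input is needed beyond the chain rule.
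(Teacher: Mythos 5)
Your argument follows the paper's approach: the question is local on $Y'$, and one invokes the local normal form of a relative submersion (from the preceding lemma) to reduce to $\varphi$ being a coordinate projection, whereupon $Z\times_YY'$ is a trivial product over $Z$ and inherits an obvious rel--$C^\infty$ structure. The paper leaves the gluing of these local models implicit; you spell out the cocycle check, correctly observing that the transition between two product trivializations pulls back along $h:Z_0\to Y_0$ to a rel--$C^\infty$ map by the chain rule, which is a harmless elaboration of the same proof.
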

	\begin{proof}
	    The question is local on $Y'$. Arguing as in the proof of the previous lemma, we may assume that $Y' = Y\times M$ for some $C^\infty$ manifold $M$, with $\varphi$ given by the coordinate projection $Y\times M\to Y$. In this case, we can see that $Z\times_YY' = Z\times M$ has a tautological rel--$C^\infty$ structure over $S$.
	\end{proof}
	\begin{lemma}[Local Triviality of Families of Vector Spaces]\label{loc-triv}
		Let $\pi: E/S\to X/S$ be a relative submersion with fibre dimension $n$. Suppose 
		we have rel--$C^\infty$ $S$-morphisms
		\begin{align}
			a:E\times_XE\to E\\
			s:\bR\times E\to E\\
			\fz:X\to E
		\end{align}
		which are also $X$-morphisms. Assume that for each $x\in X$, the fibre $E_x:=\pi^{-1}(x)$ carries the structure of an $n$-dimensional $\bR$-vector space with the restrictions of $a,s,\fz$ being the addition, scalar multiplication and zero vector respectively. Then, given any $x\in X$, there is an open neighborhood $x\in U\subset X$ such that we have an $X$-morphism $E|_U\to U\times\bR^n$ which is a rel--$C^\infty$ $S$-diffeomorphism and intertwines the addition, scalar multiplication and zero vector on either side. In particular, $E$ naturally carries the structure of a rel--$C^\infty$ $\bR$-vector bundle of rank $n$ on $X/S$.	The obvious analogue of for $\bC$-vector bundles also holds.
	\end{lemma}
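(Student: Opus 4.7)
The strategy is standard in spirit: produce $n$ local rel--$C^\infty$ sections of $\pi$ whose values at $x_0$ form a basis of $E_{x_0}$, assemble them via the vector space structure maps into a rel--$C^\infty$ $X$-morphism $\Psi : U \times \bR^n \to E|_U$, and then check that $\Psi$ is a rel--$C^\infty$ diffeomorphism. The main obstacle is the last step, since the rel--$C^\infty$ inverse function theorem a priori only gives a local inverse near $U \times \{0\}$; we handle this by combining fibrewise linearity with the scalar multiplication $s$.

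For the first step, choose a basis $v_1,\ldots,v_n$ of $E_{x_0}$. Since $\pi : E/S \to X/S$ is a relative submersion, the rel--$C^\infty$ inverse function theorem (Lemma \ref{rel-ift}) provides, near each $v_i \in E$, local rel--$C^\infty$ coordinates in which $\pi$ becomes a coordinate projection. Taking constant sections in these coordinates and shrinking to a common domain, we obtain an open neighborhood $U$ of $x_0$ and rel--$C^\infty$ sections $e_i : U \to E$ of $\pi$ with $e_i(x_0) = v_i$. Using $a$ and $s$, define
\begin{align}
    \Psi : U \times \bR^n &\to E|_U \\
    (x,\lambda_1,\ldots,\lambda_n) &\mapsto s(\lambda_1,e_1(x)) + \cdots + s(\lambda_n,e_n(x)),
\end{align}
where the sum is formed iteratively using $a$. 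By the chain rule, $\Psi$ is a rel--$C^\infty$ $X$-morphism that is fibrewise $\bR$-linear and sends $U \times \{0\}$ to $\fz(U)$.

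Next, observe that at $(x_0,0)$, the derivative of $\Psi$ in the $\bR^n$ direction is the isomorphism $\lambda \mapsto \sum \lambda_i v_i$ onto the vertical tangent space $T_{\fz(x_0)} E_{x_0} \subset T_{\fz(x_0)}E$, while the derivative along $U \times \{0\}$ maps onto the tangent space to the zero section. Since $\pi$ is a relative submersion, these two subspaces span $T_{\fz(x_0)}E$ (over the base), so $T\Psi$ is an isomorphism at $(x_0,0)$. Another application of the rel--$C^\infty$ inverse function theorem then yields a rel--$C^\infty$ local inverse on an open neighborhood of $(x_0,0)$; after shrinking $U$, we may assume this neighborhood contains $U \times B_\epsilon(0)$ for some $\epsilon > 0$.

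It remains to extend $\Psi^{-1}$ smoothly to all of $E|_U$. Each fibre map $\Psi|_{\{x\} \times \bR^n}$ is the linear map sending the standard basis to $e_1(x),\ldots,e_n(x)$, which remains a basis for $x$ close to $x_0$; hence, after further shrinking $U$, $\Psi$ is fibrewise bijective and therefore bijective. For rel--$C^\infty$ regularity of the inverse near any prescribed $e_\star \in E|_U$, choose $c > 0$ large enough that $s(1/c,\cdot)$ carries an open neighborhood of $e_\star$ into $\Psi(U \times B_\epsilon(0))$; on that neighborhood,
\begin{align}
    \Psi^{-1}(e) = c \cdot \Psi^{-1}\bigl(s(1/c,e)\bigr)
\end{align}
exhibits $\Psi^{-1}$ as a composition of rel--$C^\infty$ maps (scalar multiplication on $\bR^n$ being smooth). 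Thus $\Psi^{-1} : E|_U \to U \times \bR^n$ is the desired rel--$C^\infty$ trivialization; it intertwines $a$, $s$, and $\fz$ since $\Psi$ is fibrewise linear and $\Psi(x,0) = \fz(x)$. The $\bC$-vector bundle case is identical with $\bR$ replaced by $\bC$ throughout.
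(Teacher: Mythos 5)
Your proof is correct, but follows a genuinely different route from the paper. The paper works intrinsically: it introduces the rel--$C^\infty$ vector bundle $E' := \fz^* T_{E/X}$ and the globally defined $X$-morphism $\Psi\colon E\to E'$, $v\mapsto \frac{\partial}{\partial t}s(t,v)|_{t=0}$; Lemma \ref{lie-gp-vector} shows $\Psi$ is a fibrewise linear bijection with identity differential along $\fz(X)$; $T\Psi$ is then checked to be an isomorphism along the zero section (by splitting into horizontal and vertical pieces) and extended to all of $E$ by the fact that $\Psi$ intertwines scaling; Lemma \ref{rel-ift} concludes. You instead pick a basis $v_1,\ldots,v_n$ of $E_{x_0}$, extend to local rel--$C^\infty$ sections $e_i$ through the $v_i$ via the inverse function theorem, assemble the explicit fibrewise-linear map $\Psi(x,\lambda)=\sum_i s(\lambda_i,e_i(x))$, and globalize the local inverse by the same scaling trick. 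Your version is closer to the classical argument for upgrading a fibrewise vector-space structure on a fibre bundle to a local trivialization, at the cost of a choice of basis and sections; the paper's avoids all choices and lands in a target $E'$ that is already a rel--$C^\infty$ vector bundle by construction.

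Two points in your argument should be tightened. First, when you claim the $\bR^n$-derivative of $\Psi$ at $(x_0,0)$ is an isomorphism onto the vertical space $T_{\fz(x_0)}E_{x_0}$, you are tacitly invoking Lemma \ref{lie-gp-vector} applied to the smooth $n$-manifold $E_{x_0}$ with its vector-space structure: a smooth $\bR$-linear bijection $\bR^n\to E_{x_0}$ need not a priori have invertible differential at the origin, because the manifold structure on $E_{x_0}$ is not yet known to be compatible with its linear structure --- it is precisely Lemma \ref{lie-gp-vector} that rules out degeneracy, so it should be cited. Second, the assertion that $e_1(x),\ldots,e_n(x)$ \emph{remains a basis of $E_x$ for $x$ near $x_0$} is left unjustified, and since the fibre $E_x$ varies with $x$ there is no obvious topology in which to compare bases across fibres. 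The clean fix uses what you have already established: $\Psi$ restricts to a diffeomorphism on $U\times B_\epsilon(0)$, so for each $x\in U$ the linear map $\Psi_x$ is injective on $B_\epsilon(0)$; a linear map injective on a neighborhood of $0$ has trivial kernel, hence $\Psi_x$ is a bijection by dimension count.
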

	\begin{proof}
		Consider the rel--$C^\infty$ vector bundle $E':=\fz^*T_{E/X}$ on $X/S$. We will produce an $X$-morphism $E\xrightarrow{\Psi} E'$ which is a rel--$C^\infty$ $S$-diffeomorphism and is linear on each fibre of $\pi:E\to X$. This will suffice to prove the statement of the lemma. We define
		\begin{align}
			\Psi: v\in E_x\mapsto \frac{\partial}{\partial t}s(t,e)|_{(0,v)}\in T_{\fz(x)}E_x.
		\end{align}
		It is possible (using the axioms relating $a,s,\fz$) to see that for each $x\in X$, $\Psi|_{E_x}:E_x\to T_{\fz(x)}E_x$	is linear. In fact, Lemma \ref{lie-gp-vector} shows that $\Psi$ is a bijection (and that we can identify the differential of $\Phi|_{E_x}$ at $\fz(x)$ with the identity map of $T_{\fz(x)}E_x$ for each $x\in X$). Since $\Psi$ is manifestly rel--$C^\infty$, it will be enough (by the inverse function theorem with continuous dependence on parameters; see Lemma \ref{rel-ift}) to show that the map
		\begin{align}
			T_{E/S}\xrightarrow{T_{\Psi}}\Psi^*T_{E'/S}
		\end{align}
		is an isomorphism. It will be enough to check this along the zero section $\fz(X)\subset E$. This is because	the assertion will then follow (by continuity) for a neighborhood of the zero section and then for all of $E$	(since $\Psi$ commutes with scaling). Let's now consider $\fz^*T_{\Psi}:\fz^*T_{E/S}\to\fz'^*T_{E'/S}$ where		$\fz' := \Psi\circ\fz$ is the zero section of $E'$. Both the domain and the target of this map split into the tangent space along the fibre and the tangent space along the zero section. It's immediate to see that		$\fz^*T_\Psi$ is the identity map on each of those summands. This implies that $\fz^*T_\Psi$ is an isomorphism.
		\\\\
		To deal with the case of $\bC$-vector bundles, we simply observe that a rel--$C^\infty$ $\bR$-vector bundle with a rel--$C^\infty$ fibrewise almost complex structure $J:E\to E$ admits rel--$C^\infty$ local $\bC$-linear trivializations.
	\end{proof}
	\begin{lemma}
		\label{lie-gp-vector}
		Suppose $V$ is a $C^\infty$ $n$-dimensional manifold and suppose that the $C^\infty$ maps
		\begin{align}
			a : V\times V\to V\\
			s :\bR\times V\to V\\
			\fz\in V
		\end{align}
		are the addition, scalar multiplication and zero vector of an $n$-dimensional $\bR$-vector space structure on 
		$V$. Then, the map
		\begin{align}
			\Psi:V&\to T_\fz V\\
			v&\mapsto \frac{\partial}{\partial t}s(t,e)|_{(0,v)}
		\end{align} 
		is a smooth $\bR$-linear isomorphism. Moreover, its derivative at $\fz\in V$ is the identity map of $T_\fz V$. 
	\end{lemma}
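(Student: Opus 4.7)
The plan is to establish the three claims in order: smoothness is immediate from that of $s$, so the work lies in proving linearity, computing $D\Psi|_\fz$, and deducing bijectivity. For linearity, I first note that $a(u,\fz) = u$ and $a(\fz,v) = v$ imply that both partial differentials of $a$ at $(\fz,\fz)$ equal the identity on $T_\fz V$, so $Da|_{(\fz,\fz)}(\xi,\eta) = \xi + \eta$. Additivity $\Psi(a(u,v)) = \Psi(u) + \Psi(v)$ then drops out of differentiating the distributivity $s(t,a(u,v)) = a(s(t,u),s(t,v))$ with respect to $t$ at $t = 0$, while $\Psi(s(\lambda,v)) = \lambda\Psi(v)$ drops out of differentiating $s(t,s(\lambda,v)) = s(t\lambda,v)$ in $t$ at $t = 0$ via the chain rule.

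The main obstacle is showing $D\Psi|_\fz = \text{id}_{T_\fz V}$, which I plan to handle by exploiting the other distributivity identity $s(t_1+t_2,v) = a(s(t_1,v),s(t_2,v))$. Differentiating this in $t_2$ at $t_2 = 0$ yields
\begin{align}
\partial_t s(t,v)\bigl|_{t=t_1} = T_2 a\bigl|_{(s(t_1,v),\fz)}(\Psi(v)).
\end{align}
Now pick a coordinate chart around $\fz$ and differentiate this identity in $v$ at $v = \fz$. Using $\Psi(\fz) = 0$ (from $s(t,\fz) = \fz$) and $T_2 a|_{(\fz,\fz)} = \text{id}$ (from $a(\fz,v) = v$), the right-hand side collapses to $D\Psi|_\fz$, while the left-hand side is $M'(t_1)$ for $M(t) := (Ds_t)_\fz$ viewed in coordinates. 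Thus $M'$ is the constant linear map $D\Psi|_\fz$, and combining $M(0) = 0$ (since $s_0 \equiv \fz$) with $M(1) = \text{id}$ (since $s_1 = \text{id}_V$) forces $D\Psi|_\fz = \text{id}$.

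For bijectivity: since $D\Psi|_\fz = \text{id}$, the inverse function theorem makes $\Psi$ a local diffeomorphism near $\fz$, hence injective on a neighborhood of $\fz$. If $v \in V$ is nonzero with $\Psi(v) = 0$, then by linearity $\Psi(s(\lambda,v)) = \lambda\Psi(v) = 0$ for all $\lambda \in \bR$; but $s(\lambda,v) \to \fz$ as $\lambda \to 0$ while $s(\lambda,v) \ne \fz$ for $\lambda \ne 0$ (since $s_\lambda$ is a diffeomorphism with inverse $s_{1/\lambda}$), contradicting local injectivity of $\Psi$ near $\fz$. Hence $\ker \Psi = \{\fz\}$, and since $\dim V = \dim T_\fz V = n$, the linear map $\Psi$ is a bijection.
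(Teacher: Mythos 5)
Your proof is correct, but it takes a genuinely different route from the paper's. The paper observes that $(V,a,\fz)$ is a contractible (hence simply connected) abelian Lie group and invokes the standard fact that the exponential map $\exp:T_\fz V\to V$ is then a diffeomorphism and group isomorphism with $d\exp_0=\mathrm{id}$; it then shows $s(t,v)$ is the one-parameter subgroup through $\Psi(v)$, so $\exp\circ\Psi=\mathrm{id}_V$, and checks that $\Psi$ respects scaling, from which linearity, bijectivity and $d\Psi_\fz=\mathrm{id}$ all follow at once from the corresponding properties of $\exp^{-1}$. You instead avoid Lie theory entirely: you extract additivity and scaling by differentiating the two distributivity identities, compute $D\Psi|_\fz=\mathrm{id}$ via a clean interchange-of-partials argument applied to $s(t_1+t_2,v)=a(s(t_1,v),s(t_2,v))$ (reducing to $M'(t_1)\equiv D\Psi|_\fz$ with $M(0)=0$, $M(1)=\mathrm{id}$), and then deduce injectivity from the local diffeomorphism property plus scaling. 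Note that your identity $\partial_t s(t,v)|_{t=t_1}=T_2a|_{(s(t_1,v),\fz)}(\Psi(v))$ is, up to symmetry of $a$, the same one-parameter-subgroup ODE the paper writes down — the difference is that you then extract $D\Psi|_\fz$ by a direct mixed-partials calculation rather than citing the Lie-theoretic properties of $\exp$. Your version is more self-contained and elementary; the paper's is shorter modulo the cited black box. Both are valid.
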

	\begin{proof}
		We note that $(V,a,\fz)$ is an abelian Lie group under the given assumptions. The map $s$ gives a contraction of $V$ to $\fz$ and thus, $V$ is contractible. By general Lie group theory, this means that that the exponential map		$\exp:T_\fz V\to V$ is a diffeomorphism (and even a group isomorphism since $V$ is abelian) with $d(\exp)_\fz$ being the identity map. Thus, it will be enough to show that $\exp\circ\Psi$ is the identity map of $V$ and that $\Psi$ respects scaling. The latter is proved by the computation
		\begin{align}
			\Psi(s(\lambda,v)) = \frac{d}{dt}s(t,s(\lambda,v))|_{t=0}
			= \frac{d}{dt}s(\lambda t,v))|_{t=0} = \lambda\cdot\Psi(v).
		\end{align}
		For the former, we argue as follows. Given $v\in V$, we claim that $t\mapsto \gamma_v(t):=s(t,v)$ is the same as the exponential trajectory $t\mapsto \exp(t\cdot\Psi(v))$. To see this, we note that $\gamma_v(0) = s(0,v) = \fz$ and that
		\begin{align}
			\gamma_v'(\tau) = \frac{d}{dt}s(t+\tau,v)|_{t=0} = \frac{d}{dt}a(s(t,v),\gamma_v(\tau))|_{t=0} = 
			da(\cdot,\gamma_v(\tau))|_0\cdot\Psi(v)
		\end{align}
		which shows that $s(t,v) = \exp(t\cdot \Psi(v))$. Evaluating at $t=1$ gives $(\exp\circ\Psi)(v)=v$.
	\end{proof}
	\begin{lemma}[Inverse Function Theorem with Parameters]\label{rel-ift}
		Suppose $V,W$ are finite dimensional $\bR$-vector spaces, $(S,t)$ is a pointed topological space and $\cU\subset V\times S$ is an open neighborhood of $(0,t)$. Let $f:\cU\to W$ be a rel--$C^1$ map (with respect to the projection $\cU\to S$) given by $(q,s)\mapsto f_s(q)$ such that we have $f_t(0)=0$ and the derivative $df_t(0):V\to W$ is an isomorphism. Then, there exist open neighborhoods $U\subset\cU$ and $U'\subset W\times S$ of $(0,t)$ such that the map $F:\cU\to W\times S$ given by $(q,s)\mapsto(f_s(q),s)$ is a rel--$C^1$ $S$-diffeomorphism	$U\xrightarrow{\simeq} U'$, i.e., it is bijective and its inverse $G$ is also rel--$C^1$. Moreover, if $2\le r\le\infty$ and $f$ is rel--$C^r$, then so is $G$.
	\end{lemma}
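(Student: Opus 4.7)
The plan is to reduce to the classical inverse function theorem fibrewise over $S$ and then promote the inverse to rel--$C^r$ regularity by induction on $r$, using that $Tf$ is rel--$C^{r-1}$ and the chain rule (Lemma \ref{rel-chain}).

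For the base case $r=1$: rel--$C^1$ regularity of $f$ means (unpacking the definition, with rel--$C^0$ being mere continuity) that $(q,s) \mapsto df_s(q) \in \Hom(V,W)$ is jointly continuous. Since $df_t(0)$ is an isomorphism and the locus of isomorphisms is open in $\Hom(V,W)$, after shrinking $\cU$ we may assume $df_s(q)$ is an isomorphism throughout with $\|df_s(q)^{-1}\|$ uniformly bounded. Setting $L := df_t(0)$, the equation $f_s(q) = w$ becomes the fixed-point problem $q = L^{-1}w + L^{-1}(Lq - f_s(q))$, and continuity of $df_s(q)$ lets us arrange $\|L^{-1}(L - df_s(q))\|_{\text{op}} < 1/2$ on some product neighborhood $B_V(0,\rho) \times B_W(0,\rho) \times N_t$ of $(0,0,t)$. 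Banach's fixed-point theorem then yields a jointly continuous fibrewise inverse $g_s(w)$, and the classical inverse function theorem applied to each $f_s$ gives that $g_s$ is $C^1$ with $dg_s(w) = (df_s(g_s(w)))^{-1}$. Continuity of this derivative in $(w,s)$ follows from continuity of $g_s$, $df_s(q)$ and matrix inversion, so $G(w,s) := (g_s(w), s)$ is rel--$C^1$.

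For the inductive step $r \geq 2$: assume the theorem for $r-1$. Since $f$ being rel--$C^r$ is in particular rel--$C^{r-1}$, the inductive hypothesis yields that $G$ is rel--$C^{r-1}$. By definition $Tf$ is rel--$C^{r-1}$, so $(q,s) \mapsto df_s(q)$ is rel--$C^{r-1}$, and because matrix inversion on the open set of isomorphisms is $C^\infty$, $(q,s) \mapsto df_s(q)^{-1}$ is also rel--$C^{r-1}$. Composing with $G$ via Lemma \ref{rel-chain}, $(w,s) \mapsto df_s(g_s(w))^{-1}$ is rel--$C^{r-1}$. Hence $TG(w,z,s) = (g_s(w), df_s(g_s(w))^{-1} z, s)$, being linear in $z$ with rel--$C^{r-1}$ coefficients, is itself rel--$C^{r-1}$, which by definition of rel--$C^r$ means $G$ is rel--$C^r$.

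The only genuine obstacle is the base case: obtaining a contraction estimate with constant uniform in $s$ on a single product neighborhood, and then verifying joint (not merely fibrewise) continuity of $(w,s) \mapsto df_s(g_s(w))^{-1}$. Both steps rely solely on continuity of $df_s(q)$ in $(q,s)$ -- the strongest consequence of the rel--$C^1$ hypothesis -- and crucially no joint differentiability of $f$ with respect to the parameter $s$ is needed or even assumed.
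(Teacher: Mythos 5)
Your proposal is correct and follows essentially the same approach as the paper's: a contraction-mapping argument with a uniform constant gives a jointly continuous fibrewise inverse, the derivative formula $dg_s(w) = (df_s(g_s(w)))^{-1}$ establishes rel--$C^1$, and then induction on $r$ via the chain rule (Lemma \ref{rel-chain}) and smoothness of $A\mapsto A^{-1}$ upgrades $G$ to rel--$C^r$. The only cosmetic differences are that the paper normalizes $V=W$ and $L=\text{id}_V$ while you keep $L=df_t(0)$ explicit, and the paper extracts an explicit Lipschitz estimate for $g_s$ from Rudin's constructive proof whereas you invoke uniform continuity of the fixed point more abstractly.
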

	\begin{proof}
		(The proof is essentially the same as that of the inverse function theorem.) Without loss of generality, we may assume that $V=W$ and $df_t(0) = \text{id}_V$. Endow $V$ with a norm $\|\cdot\|$. Now, we may assume (by shrinking $\cU$ and $S$) that $\|df_s(q)-\text{id}_V\|\le\frac12$ for all $(q,s)\in\cU$ and that $\cU=N\times S$ where $0\in N\subset V$ is the $\epsilon$-ball centred at $0$ for some $\epsilon>0$. Now, define $h_s(q):=f_s(q)-q$ and notice that by the mean value inequality, we have
		\begin{align}
			\|f_s(q_1)-f_s(q_2)\|\ge\|q_1-q_2\|-\|h_s(q_1)-h_s(q_2)\|\ge\frac12\|q_1-q_2\|
		\end{align}
		which shows that $f|_\cU$ is injective. Now, given $q'\in N$ (sufficiently small) and $s\in S$, we would like to		find $q\in N$ such that $q'=f_s(q)=q+h_s(q)$, i.e., $q = q' - h_s(q)$. In order to get a solution $q$ by the contraction mapping principle, we would like to find $\delta>0$ and further shrink $S$ so that for all $\|q\|\le\delta$, $\|q'\|\le\delta/3$ and $s\in S$, we have $\|q'-h_s(q)\|\le\delta$. We estimate
		\begin{align}
			\|q'-h_s(q)\|\le\|q'\| + \|h_s(q)-h_s(0)\| + \|h_s(0)\| &\le \delta/3 + \delta/2 + \|h_s(0)\|
		\end{align}
		and thus, after shrinking $S$ to assume that $\|h_s(0)\| = \|h_s(0)-h_t(0)\|\le\delta/6$ for all $s\in S$, we conclude	that for all $\|q'\|\le\delta/3$ and $s\in S$, there exists a unique $q = g_s(q')$ with $\|q\|\le\delta$ such that $f_s(q)=q'$. From the explicit constructive proof of \cite[Theorem 9.23]{Rudin}, we deduce the estimate
		\begin{align}
		    \|g_s(q') - g_{s_0}(q_0')\|\le 2\|q'-f_s(g_{s_0}(q_0'))\|    
		\end{align}
		for all $s,s_0\in S$ and $\|q'\|,\|q'_0\|\le\delta/3$ which shows that the map $(s,q')\mapsto g_s(q')$ is continuous at each point $(s_0,q_0')$. By shrinking $\cU$ to a suitable open $U$ and $W\times S$ to a suitable open $U'$, we thus see that $F$ is a homeomorphism. We can show, by the proof of the inverse function theorem (see \cite[Theorem 9.24]{Rudin}), that the function $q'\mapsto g_s(q')$ is differentiable for each $s\in S$, with derivative given by the formula
		\begin{align}\label{ift-derivative}
			dg_s(q') = \left[df_s(g_s(q'))\right]^{-1}
		\end{align}
		which completes the proof that $g(s,q'):=g_s(q')$ is rel--$C^1$ on $U'/S$. Now, we may use \eqref{ift-derivative} and induction on $r$, using the chain rule from Lemma \ref{rel-chain} and the smoothness of the map $A\mapsto A^{-1}$ on $GL(V)$, to conclude that $g$ is rel--$C^r$ if $f$ is rel--$C^r$.
	\end{proof}
	\subsection{Obstruction Bundles}
	We specialize now to the situation of \textsection\ref{mod-problem}, additionally assuming that $E=0$. We let $\fM$ denote the moduli space of pairs $(s,f)$ with $s\in\cS$ and $f:C_s\to X$ a $J$-holomorphic map which	is regular (i.e., cut-out transversely) and satisfies $f_*[C_s]=\beta$. Assume that we have a smooth $\bC$-vector bundle $F\to X$ equipped with a $\bC$-linear connection $\nabla$. Given any smooth map $f:C\to X$, with $C$ a prestable	curve, the pullback $f^*F$ naturally acquires the structure of a holomorphic vector bundle when equipped with the $\bC$-linear Cauchy-Riemann operator $(f^*\nabla)^{0,1}$.
	\begin{definition}
		We say that a $J$-holomorphic map $f:C\to X$ is \textbf{$F$-unobstructed} provided it is regular as a holomorphic map in $X$ (i.e., is cut-out transversely by $\delbar_J$) and, in addition, satisfies $H^1(C,f^*F)=0$.
	\end{definition}
	\begin{definition}
		We define $\fM_F\subset\fM$ to be the sub-sheaf of pairs $(s,f)$, where $s\in\cS$ and $f:C_s\to X$ is $J$-holomorphic and $F$-unobstructed.
	\end{definition}
	\begin{remark}
	    Note that over $\fM_F$, we have a family of $\bC$-vector spaces (of constant rank) given by 
	    \begin{align}
	        (s,f)\mapsto H^0(C,f^*F):=\ker (f^*\nabla)^{0,1}.
	    \end{align}
	   It is natural to ask if this can be realized canonically as a rel--$C^\infty$ $\bC$-vector bundle over $\fM_F$, and Theorem \ref{obs-thm} below answers this is in the affirmative.
	\end{remark}
	\noindent Now, note that the tangent bundle of $F$ sits in the canonical short exact sequence
	\begin{align}\label{TF}
		0\to \pi^*F\to T_F\xrightarrow{d\pi} \pi^*T_X\to 0.
	\end{align}
	The connection $\nabla$ gives a splitting $T_F\simeq \pi^*F\oplus\pi^*T_X = \pi^*(F\oplus T_X)$ and thus, defines an almost complex structure $J_F$ on the manifold $F$ by the pulling back the complex vector bundle structure from $F\oplus T_X$ (where the almost complex structure on the second factor is $J$). Note that that with this definition, the projection map $\pi:(F,J_F)\to (X,J)$ is pseudo-holomorphic. Furthermore, a $J_F$-holomorphic	map $\tilde f:C\to F$ is exactly the data of a $J$-holomorphic map $f:C\to X$ (given by $f = \pi\circ\tilde f$) and a smooth section $\sigma\in \ker (f^*\nabla)^{0,1}$.
	\begin{lemma}\label{F-unob}
		Let $C$ be a prestable curve and $\tilde f:C\to F$ be $J_F$-holomorphic and set $f:=\pi\circ\tilde f$. If $f$ is $F$-unobstructed, then $\tilde f$ is regular. Conversely, if $\tilde f$ is regular, then so is $f$. Moreover, if $\tilde f$ has image lying in the zero section of $E$, then we may additionally conclude that $f$ is $F$-unobstructed.
	\end{lemma}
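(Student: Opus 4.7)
The plan is to use the connection $\nabla$ to split $\tilde f^* T_F \simeq f^*F \oplus f^*T_X$ as a $\bC$-vector bundle on $C$ (with the first factor vertical and the second horizontal) and to show that in this splitting the linearized Cauchy--Riemann operator is upper triangular,
\begin{equation*}
    D(\delbar_{J_F})_{\tilde f} = \begin{pmatrix} (f^*\nabla)^{0,1} & A_{FX} \\ 0 & D(\delbar_J)_f \end{pmatrix}
\end{equation*}
for some operator $A_{FX}:\Omega^0(f^*T_X)\to\Omega^{0,1}(f^*F)$. The second row is controlled by the fact that $\pi:(F,J_F)\to(X,J)$ is pseudo-holomorphic: $d\pi$ intertwines the two linearized operators, so the horizontal component of $\delbar_{J_F}(\tilde f)$ equals $\delbar_J(f)$ and depends only on the horizontal part of the variation, forcing $A_{XF}=0$ and $A_{XX}=D(\delbar_J)_f$.

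For the $(1,1)$ entry, I would compute directly with the fiber-preserving variation $\tilde f_\epsilon = (f, \sigma + \epsilon \delta\sigma)$. Using the local formula for $J_F$ in terms of the connection one-form together with the identity $J\circ df = df\circ j_C$, a short calculation yields that the vertical component of $\delbar_{J_F}(\tilde f_\epsilon)$ equals $(f^*\nabla)^{0,1}(\sigma + \epsilon\delta\sigma)$, so differentiating at $\epsilon=0$ gives $A_{FF} = (f^*\nabla)^{0,1}$ independently of $\sigma$. Executing this computation cleanly (tracking how the horizontal subspace at $(x,s)\in F$ depends on $s$) is the main technical step; beyond it the lemma reduces to elementary linear algebra.

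Given the upper-triangular form, all three assertions follow formally. Any upper-triangular operator with surjective diagonal blocks is surjective (solve the horizontal component first, then the vertical), so $F$-unobstructedness of $f$---which is exactly surjectivity of both diagonal blocks---implies $\tilde f$ is regular, proving the first claim. Projecting a surjection onto its second factor turns surjectivity of $D(\delbar_{J_F})_{\tilde f}$ into surjectivity of $D(\delbar_J)_f$, proving the converse. Finally, when $\tilde f$ lies in the zero section we have $\sigma\equiv 0$, so $(f_\epsilon^*\nabla)^{0,1}\sigma$ vanishes identically for every horizontal variation $f_\epsilon$; hence $A_{FX}=0$, the operator becomes block diagonal, and surjectivity of $D(\delbar_{J_F})_{\tilde f}$ becomes equivalent to surjectivity of each diagonal block separately, which together give $f$ regular and $H^1(C, f^*F)=0$, i.e., $f$ is $F$-unobstructed.
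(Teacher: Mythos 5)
Your proof is correct and takes essentially the same approach as the paper. The paper encodes the same structure as a commutative diagram with exact rows (coming from the short exact sequence $0\to f^*F\to \tilde f^*T_F\to f^*T_X\to 0$) and invokes the snake lemma, which is equivalent to your block--upper-triangular decomposition of $D(\delbar_{J_F})_{\tilde f}$ and the ensuing linear algebra; the observation that the sequence splits over the zero section, killing $A_{FX}$, is also how the paper handles the last claim (vanishing of the connecting morphism).
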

	\begin{proof}
		We pullback the sequence (\ref{TF}) along $\tilde f$ to get the sequence
		\begin{align}
			0\to f^*F\to \tilde f^*T_F\to f^*T_X\to 0
		\end{align}
		which then leads to the following commutative diagram with exact rows
		\begin{center}
		\begin{tikzcd}
			0 \arrow[r] & \Omega^0(C,f^*F) \arrow[d, "(f^*\nabla)^{0,1}"] \arrow[r]& 
			\Omega^0(C,\tilde f^*T_F) \arrow[d, "D(\delbar_{J_F})_{\tilde f}"] \arrow[r]& 
			\Omega^0(C,f^*T_X) \arrow[d, "D(\delbar_J)_f"] \arrow[r]& 0\\
			0 \arrow[r] & \Omega^{0,1}(\tilde C,f^*F) \arrow[r] & \Omega^{0,1}(\tilde C,\tilde f^*T_F) \arrow[r] & 
			\Omega^{0,1}(\tilde C,f^*T_X) \arrow[r]& 0
		\end{tikzcd}
		\end{center}
		where $\tilde C$ is the normalization of $C$. Now, using the snake lemma gives the required result. One only needs to notice that when $\tilde f$ has image lying in the zero section of $E$, the short exact sequence of complexes above splits and thus, the connecting morphism in the long exact sequence is zero.
	\end{proof}
	\begin{corollary}
		$\fM_F$ is represented by an open subset of $\fM$.
	\end{corollary}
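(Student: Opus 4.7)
The plan is to realize $\fM_F$ as the preimage of an open locus under a continuous natural transformation into an auxiliary moduli space. Specifically, I would apply Theorem \ref{rep} to the almost complex manifold $(F,J_F)$ (constructed just before Lemma \ref{F-unob}), with the same family $\pi:\cC\to\cS$, trivial perturbation, and class $\tilde\beta:=\iota_*\beta \in H_2(F,\bZ)$ where $\iota:X\hookrightarrow F$ is the zero section. This produces a representable sheaf $\fM^{\text{reg}}_{J_F}(\pi,F)_{\tilde\beta}$, whose underlying topological space sits, by Lemma \ref{top-chart} combined with Remark \ref{non-reg}, as an open subset in the $C^0$ topology inside the space $\fM^{\text{all}}_{J_F}(\pi,F)^{\text{top}}_{\tilde\beta}$ of all $J_F$-holomorphic maps (regular or not) in class $\tilde\beta$.

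Next, postcomposition with $\iota$ defines a natural transformation $\iota_*:\fM\to\fM^{\text{all}}_{J_F}(\pi,F)_{\tilde\beta}$ whose underlying map on topological spaces, $(s,f)\mapsto(s,\iota\circ f)$, is continuous in the $C^0$ topology (immediate from the universal property of Lemma \ref{c0-univ}, since composition of a continuous family of maps with a fixed continuous map is continuous). The crucial observation is Lemma \ref{F-unob}: a point $(s,f)\in\fM$ lies in $\fM_F$ if and only if $\iota\circ f$ is regular as a $J_F$-holomorphic map, because $f$ is already regular and $\iota\circ f$ factors through the zero section. Thus $\fM_F^{\text{top}} = (\iota_*^{\text{top}})^{-1}(\fM^{\text{reg}}_{J_F}(\pi,F)^{\text{top}}_{\tilde\beta})$, which is open in $\fM^{\text{top}}$ by the previous paragraph.

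Since $\fM_F$ and $\fM$ share the same base $\cS$ and structure morphism $\rho_\cS$, this topological openness upgrades via Lemma \ref{prop-rep}(a)(1) to the statement that $\fM_F\subset\fM$ is the inclusion of an open sub-sheaf, so $\fM_F$ is represented by an open subset of the rel--$C^\infty$ manifold representing $\fM$. There is no real obstacle: the only conceptual step is the translation of $F$-unobstructedness into regularity of the lifted problem in $(F,J_F)$ via Lemma \ref{F-unob}, after which all openness assertions follow formally from results already established for the moduli space of curves into an almost complex manifold.
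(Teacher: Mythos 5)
Your proof is correct and takes essentially the same route as the paper: composing with the zero section $\iota:X\hookrightarrow F$ to land in the space of $J_F$-holomorphic maps, using Lemma \ref{F-unob} to identify $\fM_F^{\text{top}}$ with the preimage of the regular locus, and invoking Lemma \ref{top-chart} (via Remark \ref{non-reg}) for openness of that locus. Your extra bookkeeping about the class $\iota_*\beta$ and the explicit appeal to Lemma \ref{prop-rep} to upgrade topological openness to openness of the sub-sheaf are both accurate and merely spell out what the paper leaves implicit.
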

	\begin{proof}
		Given any $J$-holomorphic map $f:C_s\to X$, composing with the zero section $X\to F$ gives a $J_F$-holomorphic	map $\tilde f:C_s\to F$. This provides a map from the space of pairs $(s,f)$ to the space of pairs $(s,\tilde f)$. This map is continuous since it can be defined at the level of the corresponding functors. Taking the inverse image of the regular locus under this map gives precisely $\fM_F\subset\fM$. The regular locus is open by Lemma \ref{top-chart} and thus, so is $\fM_F\subset\fM$.
	\end{proof}
	\begin{definition}
		We define $\fF$ to be the moduli space of pairs $(s,\tilde f)$ where $s\in\cS$ and $\tilde f:C_s\to F$ is		$J_F$-holomorphic, regular and satisfies $\tilde f_*[C_s]=\beta$. This can be defined as a sheaf on $(C^\infty/\cdot)$ just as in Definition \ref{def-mod-functor} and is thus representable by Theorem \ref{rep}. Let $\pi_*:\fF\to\fM$ be the map given by $(s,\tilde f)\mapsto (s,f):=(s,\pi\circ\tilde f)$. It is a rel--$C^\infty$ $\cS$-morphism since it can be defined at the level of functors. Furthermore, we define the open subset $\fF_F := \pi_*^{-1}(\fM_F)\subset\fF$.
	\end{definition}
	\begin{remark}
		Notice that the fibre of $\pi_*:\fF_F\to\fM_F$ over a point $(s,f)$ can be identified canonically with the	$\bC$-vector space $H^0(C,f^*F)$. Notice that by the assumption of $F$-unobstructedness and the Riemann-Roch Theorem, it follows that $\dim_{\bC} H^0(C,f^*F) = N(1-g) + \langle c_1(F),\beta\rangle$, where $N$ and $g$ are respectively the rank of $F$ and the arithmetic genus of $C_s$. Thus, we may view $\pi_*$ as describing a family of complex vector spaces.
	\end{remark}
	\begin{theorem}[Obstruction Bundle on Moduli Space]\label{obs-thm}
		The map 
		\begin{align}
			\pi_*:\fF_F\to\fM_F
		\end{align} 
		whose fibres are canonically complex vector spaces, defines a rel--$C^\infty$ $\bC$-vector bundle on $\fM_F/\cS$ of rank given by $N(1-g) + \langle c_1(F),\beta\rangle$.
	\end{theorem}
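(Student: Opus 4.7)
The plan is to apply Lemma \ref{loc-triv} (and its $\bC$-linear variant) to the rel--$C^\infty$ $\cS$-morphism $\pi_*: \fF_F \to \fM_F$. The required ingredients are: (a) rel--$C^\infty$ $\fM_F$-morphisms giving fibrewise addition, scalar multiplication, complex multiplication, and zero section; (b) $\pi_*$ being a relative submersion of constant fibre dimension $2[N(1-g) + \langle c_1(F), \beta\rangle]$; and (c) each fibre carrying the asserted $\bC$-vector space structure. Item (c) follows from $F$-unobstructedness and Riemann--Roch, as noted in the remark preceding the theorem.

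For (a), I proceed functorially via the universal property of Theorem \ref{rep}. The zero section $\fz: \fM_F \to \fF_F$ corresponds to $(s, f) \mapsto (s, 0_F \circ f)$, where $0_F: X \hookrightarrow F$ is the zero section of $F$; its image lies in $\fF_F$ by the last sentence of Lemma \ref{F-unob}. For the addition $a: \fF_F \times_{\fM_F} \fF_F \to \fF_F$: a rel--$C^\infty$ map $Y/T \to \fF_F \times_{\fM_F} \fF_F$ is, by definition, the data of two $J_F$-holomorphic lifts $\tilde f_1, \tilde f_2: \cC_Y \to F$ of a common $J$-holomorphic map $f: \cC_Y \to X$; the connection $\nabla$ lets us view each $\tilde f_i - 0_F \circ f$ as a section of $f^* F$, and $0_F f + (\tilde f_1 - 0_F f) + (\tilde f_2 - 0_F f)$ then defines the required sum as a new $J_F$-holomorphic lift of $f$. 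Scalar multiplication and the fibrewise almost complex structure (needed for the $\bC$-linear refinement of Lemma \ref{loc-triv}) are constructed analogously from the $\bC$-vector bundle structure of $F$.

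For (b), I verify pointwise surjectivity of $T\pi_*: T_{\fF_F/\cS} \to \pi_*^* T_{\fM_F/\cS}$. Using the gluing chart of Section \ref{sc-ift} at $(s_0, \tilde f_0)$ together with a compatibly constructed chart around $(s_0, f_0) := \pi_*(s_0, \tilde f_0)$, the relative tangent spaces at these points are identified with $\ker D(\delbar_{J_F})_{\tilde f_0}$ and $\ker D(\delbar_J)_{f_0}$ respectively, with $T\pi_*$ corresponding to pushforward by $d\pi$. Applying the snake lemma to the pullback of (\ref{TF}) along $\tilde f_0$ --- exactly the computation in the proof of Lemma \ref{F-unob} --- yields an exact sequence with connecting map landing in $H^1(C_{s_0}, f_0^* F) = 0$, which forces surjectivity of $T\pi_*$ with kernel $H^0(C_{s_0}, f_0^* F)$. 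A Riemann--Roch dimension count confirms the fibre dimension, and Lemma \ref{loc-triv} then delivers the rel--$C^\infty$ $\bC$-vector bundle structure of rank $N(1-g) + \langle c_1(F), \beta \rangle$.

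The main obstacle is the tangent-space identification used in step (b): one must carefully interpret the $K$-factor in the gluing chart $U_u = G_u \times V_u \times K_u$ as giving the relative tangent direction for $\rho_\cS$, using the isomorphism (\ref{kernel-gluing}) to account for the variation of the kernel at nearby points. Equivalently, one may derive the identification purely from the universal property of $\fM_F$ and $\fF_F$: a rel--$C^\infty$ morphism from $(\bR, 0)$ to $\fM_F$ based at $(s_0, f_0)$ and lifting the constant map to $\cS$ is exactly a smooth $1$-parameter family of $J$-holomorphic maps $C_{s_0} \to X$, whose derivative at $0$ must lie in $\ker D(\delbar_J)_{f_0}$ by differentiating $\delbar_J = 0$; the analogous statement for $\fF_F$ is identical.
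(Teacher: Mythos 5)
Your proposal is correct and follows essentially the same route as the paper's own argument: apply Lemma \ref{loc-triv} to $\pi_*:\fF_F\to\fM_F$, construct the fibrewise operations $a_*$, $s_*$, $\fz_*$ functorially via the universal property of $\fM$ and $\fF$, and verify the relative submersion condition pointwise using the snake lemma from the proof of Lemma \ref{F-unob} together with $H^1(C_{s_0},f_0^*F)=0$. The one minor difference is stylistic: the paper reduces the submersion check to each fibre by base-changing to a point $s\in\cS$, where $(\fF_F)_s$ and $(\fM_F)_s$ become ordinary smooth manifolds and the usual linearized Cauchy--Riemann identification of tangent spaces applies directly; you instead invoke the gluing chart of \textsection\ref{sc-ift} to identify $T_{\fF_F/\cS}$ with $\ker D(\delbar_{J_F})_{\tilde f_0}$, which is more work but reaches the same identification. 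Your explicit formula $0_Ff+(\tilde f_1-0_Ff)+(\tilde f_2-0_Ff)$ for the addition map $a_*$ is a useful unpacking of what the paper leaves implicit in the phrase ``defined in an obvious fashion as maps of functors.''
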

	\begin{proof}
		Let us first prove that $\pi_*$ is a submersion. This is a pointwise condition, and thus, we only need to check it		after base changing to each point $s\in \cS$. The base changes $(\fF_F)_s$ and $(\fM_F)_s$ are smooth manifolds		and the differential $T_{\pi_*}|_{(\fF_F)_s}$ of the map $\pi_*$ at a point $\tilde f\in(\fF_F)_s$ is given by
		\begin{align}
			\ker D(\delbar_{J_F})_{\tilde f} \xrightarrow{\tilde f^*d\pi} \ker D(\delbar_J)_f
		\end{align}
		where $f:=\pi\circ\tilde f$. This map is surjective using the snake lemma on the commutative diagram from the proof of Lemma \ref{F-unob} in conjunction with the assumption that $H^1(C_s,f^*F)=0$. Next, note that the fibrewise addition, scalar multiplication and zero section maps of $F$
		\begin{align}
			a:F\times_X F\to F\\
			s:\bC\times F\to F\\
			\fz:X\to F
		\end{align}
		give rise to the rel--$C^\infty$ $\cS$-morphisms (which are also $\fM_F$-morphisms)
		\begin{align}
			a_*:\fF_F\times_{\fM_F}\fF_F\to \fF_F\\
			s_*:\bC\times \fF_F\to\fF_F\\
			\fz_*:\fM_F\to\fF_F
		\end{align}
		which give the fibrewise vector space structures on $\fF_F$. These maps are rel--$C^\infty$ because they		can be defined in an obvious fashion as maps of functors. Now, by Lemma \ref{loc-triv}, it follows that		$\pi_*:\fF_F\to\fM_F$ possesses rel--$C^\infty$ $\bC$-linear local trivializations over $\fM_F/\cS$.
	\end{proof}
	\section{Construction of $\Mbar$}\label{stable-mod}
	We will now prove Theorem \ref{stable-rep}. To do this, we need some standard facts about holomorphic curves (elliptic regularity, Gromov-Schwarz Lemma) reformulated in terms	of rel--$C^\infty$ maps. After presenting these facts, we will show that $\Mbar\subset\fM$ is open (in the case when $E=0$). The technique used for this also enables us to produce local (topological) \'etale charts for the moduli orbispace $\Mbar_{g,m}(X,J)_\beta$ of stable $J$-holomorphic maps of type $(g,m)$ in class $\beta$.
	\subsection{Families of Holomorphic Curves are Rel--$C^\infty$}
	Let $(\pi:\cC\to\cS,p_1,\ldots,p_m)$ be a proper flat analytic family of prestable curves of type $(g,m)$ as in \textsection\ref{mod-functor}. Let $\cC^\circ\subset\cC$ be the open subset defined as the complement of the union of the nodal loci of all the fibres of $\pi$. Since $\cC^\circ$ and $\cS$ are complex manifolds and $\pi:\cC^\circ\to\cS$ is a holomorphic submersion, it can be viewed canonically as a rel--$C^\infty$ manifold (with fibres of real dimension $2$).
	\begin{definition}[Rel--$C^\infty$ Structure on a Family of Curves]
		Suppose $\varphi:T\to\cS$ is a map of topological spaces. Let $\pi_T:\cC_T\to T$ be the base change of $\pi$ along $\varphi$. Let $\cC^\circ_T\subset\cC_T$ be the pullback of $\cC^\circ\subset\cC$. Then, $\pi_T:\cC^\circ\to T$ carries a rel--$C^\infty$ structure, pulled back from $\pi:\cC^\circ\to\cS$. We call this the \textbf{natural rel--$C^\infty$ structure} on $\cC^\circ_T/T$.
	\end{definition}
	\begin{remark}
		It is possible to verify that this rel--$C^\infty$ structure is independent of the presentation of $\cC_T/T$ as the pullback of $\cC/\cS$. More precisely, if we have any other $\cC'/\cS'$ and maps $\cC_T/T\to\cC'/\cS'$ fitting into a Cartesian square of topological spaces (respecting the fibrewise analytic structures), then the rel--$C^\infty$		structure induced by $\cC'/\cS'$ is the same as the one induced by $\cC/\cS$. Moreover, $\cC^\circ_T/T$ is, 		\emph{locally on the source} $\cC^\circ_T$, isomorphic to the trivial family $\bD_T$ (preserving the fibrewise analytic structures).
	\end{remark}
	\begin{lemma}\label{elliptic-reg}
		Suppose $F:\cC_T\to X$ is a continuous map such that for each $t\in T$, the map $F|_{C_{\varphi(t)}}$ satisfies		the equation (\ref{inhom-pde}). Then, the restriction of $F$ to $\cC^\circ_T/T$ is rel--$C^\infty$.
	\end{lemma}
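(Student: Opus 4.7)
The statement is local on the source $\cC^\circ_T$, so I fix a point $z_0\in\cC^\circ_T$ lying over $t_0\in T$ with image $z_0'\in\cC^\circ$ over $\varphi(t_0)\in\cS$. Since $\pi:\cC^\circ\to\cS$ is a holomorphic submersion between complex manifolds, I may choose a neighborhood of $z_0'$ which is holomorphically isomorphic over $\cS$ to a product $\bD\times\cS_0$ for some open neighborhood $\cS_0$ of $\varphi(t_0)$, with $\pi$ identified with the second projection. Pulling back under $\varphi$ gives an identification of a neighborhood of $z_0$ in $\cC^\circ_T$ with $\bD\times T_0$, where $T_0:=\varphi^{-1}(\cS_0)$, and by construction this identifies the natural rel--$C^\infty$ structure on $\cC^\circ_T/T$ with the trivial one on $\bD\times T_0/T_0$. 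On this local model, $F$ becomes a continuous map $\bD\times T_0\to X$ such that each $F_t:=F(\cdot,t):\bD\to X$ satisfies \eqref{inhom-pde}.

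Unwinding the inductive definition of rel--$C^r$, the assertion that $F$ is rel--$C^\infty$ on $\bD\times T_0/T_0$ is equivalent to the following: after possibly shrinking $\bD$ and $T_0$, for every multi-index $\alpha$ of differentiation in the $\bD$-direction, the fiberwise partial derivative $\partial_z^\alpha F(z,t)$ exists for each $(z,t)$ and depends jointly continuously on $(z,t)$. The successive tangent maps required in Definition \ref{rel-Cr-mflds} are polynomial combinations of such fiberwise derivatives with coefficients that are continuous in $(z,t,v,\ldots)$, so their continuity reduces to this.

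The main analytic input is the standard interior elliptic regularity for (perturbed) $J$-holomorphic maps with continuous parameter dependence, which I will phrase as: if $u_n:\bD\to X$ is a sequence of maps satisfying a common equation of the form \eqref{inhom-pde} (with $v$-parameter converging) and $u_n\to u$ in $C^0_{\mathrm{loc}}$, then $u$ also satisfies the equation and $u_n\to u$ in $C^k_{\mathrm{loc}}(\bD,X)$ for every $k\ge 0$. This is proved, after reducing to a single coordinate chart of $X$, by writing $\bar\partial_J$ as a perturbation of the standard $\bar\partial$ operator on $\bR^{2n}$ and iteratively applying the usual $W^{k,p}$-estimates for the Cauchy--Riemann operator on interior sub-discs (compare \cite[Appendix B]{Pardon-VFC} or the standard references on pseudoholomorphic curves). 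Applied to $F_{t_n}$ for any convergent sequence $t_n\to t$ in $T_0$, continuity of $F$ immediately gives $C^0_{\mathrm{loc}}$-convergence, and hence $C^k_{\mathrm{loc}}$-convergence of $F_{t_n}\to F_t$ for every $k$.

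The only genuine content is the cited elliptic regularity statement with continuous parameter dependence; once this is invoked, continuity of $(z,t)\mapsto\partial_z^\alpha F(z,t)$ follows formally from sequential continuity, since $T$ is a topological space but the convergence takes place in the metrizable space $C^k(K,X)$ for compact $K\subset\bD$. The rest is a routine translation into the language of rel--$C^\infty$ maps, and the main (minor) obstacle is making sure the elliptic regularity is formulated sharply enough to allow the parameter $t$ to vary over an arbitrary topological base rather than a manifold.
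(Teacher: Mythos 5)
Your overall strategy matches the paper's: localize near any point of $\cC^\circ_T$ to a trivial local model $\bD\times T'$, view $F$ as a family $(F_t)_{t\in T'}$ of solutions of \eqref{inhom-pde} on the disc, and use interior elliptic regularity to promote $C^0$-continuity in $t$ to $C^k$-continuity in $t$ for all $k$. You also correctly observe that continuity of the iterated tangent maps reduces to continuity of the fibrewise derivatives $(z,t)\mapsto\partial_z^\alpha F(z,t)$.

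The gap is the point you flag at the very end as a ``minor'' obstacle but then leave unresolved, and it is in fact the crux. You formulate elliptic regularity as a \emph{sequential} statement ($F_{t_n}\to F_t$ in $C^0_{\mathrm{loc}}$ implies $F_{t_n}\to F_t$ in $C^k_{\mathrm{loc}}$) and conclude continuity of $t\mapsto F_t\in C^k(K,X)$ ``formally from sequential continuity.'' But $T$ is an arbitrary topological space --- in particular not assumed first-countable --- so sequential continuity at $t_0$ does not imply continuity at $t_0$, and the deduction breaks down exactly where the lemma is supposed to have content. The paper avoids this by making the Arzel\`a--Ascoli step quantitative: Lemma \ref{Arz-Asc} produces a modulus $\gamma_{k,m}(M,\varepsilon)$ with $\gamma_{k,m}(M,\varepsilon)\to 0$ as $\varepsilon\to 0^+$ such that $\|f\|_{C^{k+1}},\|g\|_{C^{k+1}}\le M$ and $\|f-g\|_{C^0}\le\varepsilon$ force $\|f-g\|_{C^k}\le\gamma_{k,m}(M,\varepsilon)$. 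Combining this with the uniform $C^\infty_{\mathrm{loc}}$ bounds on $(F_t)_t$ (coming from the Gromov--Schwarz lemma followed by elliptic bootstrapping --- the a priori $C^1$ bound is what lets the bootstrap start, and is implicitly inside your black box) and the continuity of $t\mapsto F_t$ into $C^0(K,X)$ yields continuity into $C^k(K,X)$ with no countability hypothesis on $T$. To repair your argument you should either phrase your elliptic regularity input as an effective estimate of this kind, or rerun your argument with nets in place of sequences.
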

	\begin{proof}
		For any point in $\cC^\circ_T$, pick a local isomorphism of $\cC^\circ_T/T$ with an open subset of $\bD_T$ (preserving the fibrewise analytic structure). Thus, we can identify $F$ on this neighborhood with a continuous map $f:\bD\times T'\to X$ (where $T'\subset T$ is open). Viewing this as a family of maps $(f_t)_{t\in T'}$ and combining the Gromov-Schwarz Lemma (see \cite[p316, 1.3.A]{Gromov} or \cite[p223, Corollary 4.1.4]{Muller-GS}) with elliptic bootstrapping as in the proof of \cite[Proposition B.11.5]{Pardon-VFC}, we see that on any compact subset of $T'$, the family $t\mapsto f_t$ is bounded in $C^\infty_\text{loc}(\bD,X)$. Using these $C^\infty_\text{loc}$ bounds on $(f_t)_{t\in T'}$ and the continuity of $F$, an Arzel\`a-Ascoli type argument (see Lemma \ref{Arz-Asc} below) shows that $F$ is rel--$C^\infty$ as desired.
	\end{proof}
	\noindent The following lemma essentially says that ``functions which are bounded in $C^{k+1}$ and close in $C^0$ are also close in $C^k$". It can be viewed as an effective version of an Arzel\`a-Ascoli type argument.
	\begin{lemma}\label{Arz-Asc}
		Let $k\ge 0$, $m\ge 1$ be integers, and $M,\varepsilon>0$ be positive constants. Then, there exists a positive function $\gamma=\gamma_{k,m}(M,\varepsilon)>0$ such that
		\begin{align}
			\lim_{\varepsilon\to 0^+} \gamma_{k,m}(M,\varepsilon) = 0
		\end{align}
		and with the following significance. If $f,g:[-1,1]^m\to\bR$ are two $C^{k+1}$ functions		defined on the $m$-dimensional cube, satisfying $\|f\|_{C^{k+1}}\le M$, $\|g\|_{C^{k+1}}\le M$ and		$\|f-g\|_{C^0}\le\varepsilon$, then we have
		\begin{align}
			\|f-g\|_{C^k}\le\gamma.
		\end{align}
	\end{lemma}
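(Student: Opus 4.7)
The plan is to reduce the statement to a standard Landau--Kolmogorov--Hadamard type interpolation inequality. Setting $h := f - g$, we have $\|h\|_{C^0} \le \varepsilon$ and $\|h\|_{C^{k+1}} \le 2M$, and the goal becomes showing that $\|h\|_{C^k}$ is bounded by a quantity depending only on $k,m,M,\varepsilon$ that goes to $0$ as $\varepsilon \to 0^+$ with the other parameters fixed.

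I would prove this by induction on $k$. The base case $k = 0$ is immediate with $\gamma_{0,m}(M,\varepsilon) = \varepsilon$. For the inductive step, the core is a one-variable interpolation bounding the first derivative in terms of $C^0$ and $C^2$ norms. Namely, for any $x \in [-1,1]^m$ and coordinate direction $e_i$, pick $t \in (0, 1]$ small enough that either $x + t e_i$ or $x - t e_i$ lies in $[-1,1]^m$; by Taylor's theorem,
\begin{align}
    |\partial_i h(x)| \le \frac{2\varepsilon}{t} + M\, t.
\end{align}
Optimizing $t = \min\{1, \sqrt{2\varepsilon/M}\}$ yields $|\partial_i h(x)| \le C_m (M\varepsilon)^{1/2}$ for a constant depending only on $m$ (and, when $\varepsilon \ge M/2$, absorbing the trivial bound $|\partial_i h| \le M$). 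Applied to every coordinate derivative, this gives
\begin{align}
    \|\partial_i h\|_{C^0} \le C_m (M\varepsilon)^{1/2}, \qquad \|\partial_i h\|_{C^k} \le 2M.
\end{align}

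Now I would apply the inductive hypothesis (for level $k-1$) to each $\partial_i h$ with $\varepsilon$ replaced by $C_m(M\varepsilon)^{1/2}$ and $M$ unchanged. This yields $\|\partial_i h\|_{C^{k-1}} \le \gamma_{k-1,m}(M, C_m(M\varepsilon)^{1/2})$, and combining with the $C^0$ bound on $h$ itself produces the desired estimate on $\|h\|_{C^k}$. Iterating the recursion shows that the resulting $\gamma_{k,m}(M,\varepsilon)$ is of the form $C_{k,m}\, M^{k/(k+1)}\, \varepsilon^{1/(k+1)}$ (in agreement with the classical Landau--Hadamard inequality), which plainly tends to $0$ as $\varepsilon \to 0^+$.

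There is no serious obstacle here; the only mild annoyances are (i) ensuring that at every point $x$ in the closed cube at least one of $x \pm t e_i$ stays inside the domain (handled by the choice of sign above), and (ii) keeping track of which constants depend on which parameters through the induction. Both are routine bookkeeping, and the estimate itself is the content of a well-known Kolmogorov-type interpolation inequality specialized to cubes.
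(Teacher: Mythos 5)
Your proposal follows essentially the same route as the paper: induction on $k$, with the inductive step carried out by a one-variable difference-quotient (Taylor/mean-value) estimate giving $\|\partial_i(f-g)\|_{C^0}\lesssim\sqrt{M\varepsilon}$, followed by applying the inductive hypothesis to the partial derivatives with the improved $\varepsilon$ and combining. One small inaccuracy: the recursion you set up, namely $\gamma_{k}(\varepsilon)\approx m\cdot\gamma_{k-1}\bigl(C\sqrt{M\varepsilon}\bigr)+\varepsilon$, iterates to a bound of order $\varepsilon^{2^{-k}}$ (up to powers of $M$), not the sharp Landau--Kolmogorov exponent $\varepsilon^{1/(k+1)}$ that you claim in the parenthetical; obtaining the optimal exponent would require a global interpolation argument rather than this one-step-at-a-time recursion. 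This does not affect the validity of the proof, since all the lemma requires is that $\gamma\to 0$ as $\varepsilon\to 0^+$, and the paper's own proof uses the same (non-optimal) recursion without asserting an explicit power.
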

	\begin{proof}
		Let's fix $m\ge 1$ and $M$. We will prove the result by induction on $k$. For $k=0$, we can simply take $\gamma_{0,m}(M,\varepsilon)=\varepsilon$. Let us now assume that the result is true for $k=r$ and prove it for $k=r+1$ (where $r\ge 0$ is some integer). We will first estimate $\|\partial_if-\partial_ig\|_{C^0}$ for each $1\le i\le m$. Let $x\in[-1,1]^m$. If $0\ne\lambda\in\bR$		is such that $x + \lambda e_i\in[-1,1]^m$, then the mean value theorem gives
		\begin{align}
			\left|\frac{f(x+\lambda e_i)-f(x)}{\lambda} - \partial_if(x)\right|\le \sup_{|t|\le|\lambda|}
			|\partial_i f(x+te_i) - \partial_if(x)|\le M|\lambda|
		\end{align}
		which when combined with the analogous inequality for $g$ and the $C^0$ bound on $f-g$ gives
		\begin{align}
			|\partial_if(x) - \partial_ig(x)|\le 2M|\lambda| + 2|\lambda|^{-1}\varepsilon.
		\end{align}
		If $\varepsilon\ge M$, then set $\gamma_{r+1,m}(M,\varepsilon) = 2\varepsilon$. Indeed, in this case we have $\|f-g\|_{C^{r+1}}\le 2M\le 2\varepsilon$. On the other hand, if $\varepsilon<M$ then we can find $\lambda\in\bR$ with $|\lambda|^2 = \varepsilon/M$ such that $x+\lambda e_i\in[-1,1]^m$, which yields
		\begin{align}
			\|\partial_if-\partial_ig\|_{C^0}\le 4\sqrt{M\varepsilon}.
		\end{align}
		Now, by induction, we have $\|f-g\|_{C^r}\le \gamma_{r,m}(M,\varepsilon)$ and
		\begin{align}
			\|\partial_if-\partial_ig\|_{C^r}\le\gamma_{r,M}(M,4\sqrt{M\varepsilon})
		\end{align}
		for $i=1,\ldots,m$. Thus, since we have the definition
		\begin{align}
			\|f-g\|_{C^{r+1}}\le \|f-g\|_{C^r} +\sum_{i=1}^m\|\partial_if-\partial_ig\|_{C^r}
		\end{align}
		we can take
		\begin{align}
			\gamma_{r+1,m}(M,\varepsilon) := \gamma_{r,m}(M,\varepsilon) + m\cdot\gamma_{r,m}
			(M,4\sqrt{M\varepsilon})
		\end{align}
		which, by definition, satisfies $\gamma_{r+1,m}(M,\varepsilon)\to 0$ as $\varepsilon\to0^+$. This completes the induction.
	\end{proof}
	\subsection{Completing the Proof of Theorem \ref{stable-rep}}
	Theorem \ref{stable-rep} is an immediate consequence of the Lemma \ref{stable-is-open} below. (We continue to use the notation from Lemma \ref{elliptic-reg} in its statement and proof.)
	\begin{lemma}[Stability is Open]\label{stable-is-open}
		Suppose $F:\cC_T\to X$ is a continuous map such that for each $t\in T$, the map $F|_{C_{\varphi(t)}}$ is $J$-holomorphic. Then, the set of stable points, i.e., $t\in T$ such that 
		\begin{align}\label{finite-auto}
			\#\Aut(C_{\varphi(t)},p_1(\varphi(t)),\ldots,p_m(\varphi(t)),F|_{C_\varphi(t)})<\infty
		\end{align} 
		is open in $T$.
	\end{lemma}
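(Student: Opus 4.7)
The plan is to prove the contrapositive: if $t_0\in T$ is stable and $t_n\to t_0$, then $t_n$ is stable for $n$ large. I argue by contradiction, assuming each $t_n$ is unstable. After shrinking $\cS$ around $\varphi(t_0)$ as in Lemma \ref{reduction-versal}, I may assume $\pi:\cC\to\cS$ is pulled back from a versal deformation of $C_0:=C_{\varphi(t_0)}$, so that each nearby $C_{\varphi(t_n)}$ is obtained from $C_0$ by smoothing the subset $J_n\subset\{1,\ldots,d\}$ of nodes whose gluing parameter is nonzero. Passing to a subsequence, $J_n=J$ is fixed, and then the irreducible components of $C_{\varphi(t_n)}$ correspond bijectively to the connected components of the graph obtained from the dual graph of $C_0$ by contracting edges in $J$.

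For each $n$, pick an irreducible component $\Sigma_n\subset C_{\varphi(t_n)}$ on which $F$ is a constant $x_n\in X$ and with $(g(\Sigma_n), n(\Sigma_n))\in\{(0,0),(0,1),(0,2),(1,0)\}$. A further subsequence makes both this pair $(g,\ell)$ and the subset $S$ of components of $C_0$ to which $\Sigma_n$ corresponds independent of $n$. Set $\Sigma_0:=\bigcup_{i\in S}Y_i$, where the $Y_i$ are the irreducible components of $C_0$ indexed by $S$. Three immediate consequences of the explicit versal description are: every point of $\Sigma_0$ is a limit in $\cC$ of points in $\Sigma_n$, so continuity of $F$ forces $x_n\to x_0$ in $X$ and $F|_{\Sigma_0}\equiv x_0$; smoothing preserves arithmetic genus, so $g(\Sigma_0)=g$; and the external special points of $\Sigma_0$ (marked points of $C_0$ on $\Sigma_0$ together with nodes of $C_0$ not in $J$ touching $\Sigma_0$) correspond to the special points of $\Sigma_n$, so they number $\ell$.

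For the final step, write $n(Y_i)$ for the number of special points of $Y_i$ as a component of $C_0$ (marked points on $Y_i$ plus nodes of $C_0$ on $Y_i$, with self-nodes counted twice). Each internal node of $\Sigma_0$ contributes $2$ to $\sum_{i\in S}n(Y_i)$ and each external one contributes $1$, while $g(\Sigma_0)=\sum_{i\in S}g(Y_i)+N_{\text{int}}-|S|+1$ where $N_{\text{int}}$ is the number of internal nodes. Adding these identities gives
\begin{align}
\sum_{i\in S}\bigl(2g(Y_i)+n(Y_i)\bigr)=2g(\Sigma_0)+2|S|-2+\ell.
\end{align}
If every $Y_i$ were stable as an individual pointed curve, the left side would be at least $3|S|$, forcing $|S|\le 2g+\ell-2$. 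But $(g,\ell)\in\{(0,0),(0,1),(0,2),(1,0)\}$ gives $2g+\ell-2\le 0$, contradicting $|S|\ge 1$. Hence some $Y_i$ is unstable, and since $F|_{Y_i}\equiv x_0$ this exhibits an unstable component of $(C_0,p_1,\ldots,p_m,F|_{C_0})$, contradicting the stability of $t_0$.

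The main obstacle is the bookkeeping needed to fix the smoothing pattern $J$ and the combinatorial position $S$ of the unstable component along a subsequence, and to rigorously identify the limit subcurve $\Sigma_0$ with matching genus and special-point count; once this is in place, the concluding pigeonhole-style count is elementary.
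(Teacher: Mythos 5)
Your argument is a genuinely different route from the paper's, and the core idea is sound. The paper argues directly: at a stable $t_0$, it adds auxiliary marked points $q_1,\ldots,q_r$ at which $F_{t_0}$ is an immersion so that $(C_0,p_\bullet,q_\bullet)$ is a stable \emph{curve}, uses elliptic regularity (Lemma \ref{elliptic-reg}) and the parametrized implicit function theorem (Lemma \ref{rel-ift}) to propagate those marked points to continuous sections $\tilde q_i$ over a neighborhood of $t_0$, and then quotes openness of stability in the stack of prestable curves. This construction is deliberately reusable: exactly the same sections $\tilde q_i$ and cutting functions $h_j$ are what furnish the \'etale charts in \textsection\ref{et-chart}. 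You instead prove the contrapositive by a degeneration argument: an unstable component of $C_{\varphi(t_n)}$ must, in the versal picture, correspond to a connected subcurve $\Sigma_0\subset C_0$ on which $F$ is forced constant by continuity, and a genus/special-point count then detects an unstable component of $C_0$ inside $\Sigma_0$, contradicting stability at $t_0$. Your route is more elementary (no implicit function theorem), purely combinatorial once the versal description is in place, and arguably more illuminating about \emph{why} the statement is true; the price is that it yields no auxiliary structure to reuse downstream.

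Two points need care. First, your count of ``external special points'' of $\Sigma_0$ and your claim $g(\Sigma_0)=g$ are both off when $\Sigma_n$ has a self-node (which happens precisely in the $(g,\ell)=(0,2)$ case with $\Sigma_n$ an irreducible nodal rational curve equal to the whole fibre). A node of $C_0$ with both branches on $\Sigma_0$ but not in $J$ is simultaneously ``internal'' in your genus formula and ``external'' in your $\ell$-count, and smoothing preserves \emph{arithmetic} genus, so $g_a(\Sigma_0)=g(\widetilde\Sigma_n)+s$ where $s$ is the number of such self-nodes, not $g(\widetilde\Sigma_n)$. Tracking these carefully, one finds
\begin{align}
\sum_{i\in S}\bigl(2g(Y_i)+n(Y_i)\bigr)
= 2\,g_a(\Sigma_0)+2|S|-2+\bigl(\ell-2s\bigr)
= 2\,g(\widetilde\Sigma_n)+2|S|-2+\ell,
\end{align}
so your final identity and the ensuing pigeonhole bound are in fact correct, but only because the two bookkeeping slips cancel; as written, the derivation does not hold up. Second, the statement is for an arbitrary topological space $T$, and your argument is sequential, so it establishes only that the unstable locus is sequentially closed. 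This is fine once one notes (via Lemma \ref{c0-univ} and Lemma \ref{dH}) that stability factors through the continuous map $T\to C^0(\cC/\cS,X)$ into a metrizable space, so one may replace $T$ by its metrizable image; alternatively, run the same argument with nets, using finiteness of the combinatorial data $(J,S,g,\ell)$ to pass to a subnet. Either fix is routine, but should be stated.
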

	\begin{proof}
		Let $t\in T$ be such that (\ref{finite-auto}) is satisfied. Set $s:=\varphi(t)$ and $F_t:=F|_{C_{\varphi(t)}}$. We claim that on (the normalization of) any unstable irreducible component of $(C_s,p_1(s),\ldots,p_m(s))$,		the map $F_t$ has full rank at some point. Indeed, if this were not the case, then by $J$-holomorphicity $F_t$ would be constant on this component, contradicting (\ref{finite-auto}). Thus, it is possible to choose a minimal collection of distinct smooth points $q_1,\ldots,q_r\in C_s\setminus\{p_1(s),\ldots,p_m(s)\}$	such that $(C_s,p_1(s),\ldots,p_m(s),q_1,\ldots,q_r)$ is stable and $F_t$ is an immersion at each $q_i$ ($1\le i\le r$).\\\\
		Choose smooth functions $h_1,\ldots,h_r:X\to\bC$ such that $h_i(F_t(q_i))=0$ and $d(h_i\circ F_t)_{q_i}$ is an isomorphism for each $1\le i\le r$. Since $F$ is rel--$C^1$ on $\cC^\circ_T/T$ (by Lemma \ref{elliptic-reg}), it follows using the implicit function theorem with continuous dependence on parameters (a corollary of Lemma \ref{rel-ift}) -- applied to the functions $h_i\circ F:\cC^\circ_T\to\bC$ for $1\le i\le r$ -- that we have continuous local sections $\tilde q_i:T\to\cC^\circ_T$ for $1\le i\le r$ satisfying
		\begin{itemize}
			\item $\tilde q_i(t) = q_i$.
			\item For all $t'$ near $t$, we have $(h_i\circ F_t')(\tilde q_i(t')) = 0$ and $d(h_i\circ F_t')_{\tilde q_i(t')}$	is an isomorphism.
			\item For all $t'$ near $t$, the points $p_1(\varphi(t')),\ldots,p_m(\varphi(t')), \tilde q_1(t'),\ldots,\tilde q_r(t')$ are all distinct.
			\item For all $t'$ near $t$, the curves $(C_{\varphi(t')},p_1(\varphi(t')),\ldots,p_m(\varphi(t')), \tilde q_1(t'),\ldots,\tilde q_r(t'))$ are stable of type $(g,m+r)$. This is because stable curves are open in the stack of all prestable curves, see \cite[Chapter X, Lemma 3.4]{ACG-moduli}.
		\end{itemize} 
		Now, it's immediate that for each $t'$ near $t$, the map $(C_{\varphi(t')},p_1(\varphi(t')),\ldots,p_m(\varphi(t'),	F|_{C_{\varphi(t')}})$ is stable: since the presence of a marked point $\tilde q_i(t')$ on any irreducible component of $C_{\varphi(t')}$ indicates that $F$ is non-constant on this component and, in fact, immersed at $\tilde q_i(t')$. This shows that all $t'$ near $t$ satisfy (\ref{finite-auto}).
	\end{proof}
	\subsection{\'Etale Charts for the Moduli Stack of Stable Maps}\label{et-chart}
	The technique used in the proof of Lemma \ref{stable-is-open} can also be used to produce local \'etale charts for $\Mbar_{g,m}(X,J)_\beta$, the moduli orbispace of stable	$m$-pointed $J$-holomorphic curves in $X$ of arithmetic genus $g$ lying in the homology class $\beta$. (This orbispace may be highly singular, and thus the domains for its \'etale charts also will be. Over the locus of regular stable maps, these may be viewed as orbifold charts.) \\\\
	Let $(C,x_1,\ldots,x_m,u)$ be a point of $\Mbar_{g,m}(X,J)_\beta$.  Let $y_1,\ldots,y_r$ be a minimal	collection of distinct smooth points on $C$ such that $(C,x_1,\ldots,x_m,y_1,\ldots,y_r)$ is stable and $u$ is immersed at each $x_i$ for $1\le i\le m$. Minimality implies	that $r$ is the $\bC$-dimension of the Lie algebra $\fa\fu\ft(C,x_1,\ldots,x_m)$ of infinitesimal automorphisms and $\fa\fu\ft(C,x_1,\ldots,x_m,y_1,\ldots,y_r) = 0$.
	\\\\
	Let $\pi:\cC_V\to V$ be a universal deformation of $(C,x_1,\ldots,x_m,y_1,\ldots,y_r)$, i.e., $\pi:\cC_\cV\to\cV$ is a proper flat analytic family of stable curves (with marked points given by sections $\sigma_1,\ldots,\sigma_m,\tau_1,\ldots,\tau_r$ of $\pi$) equipped with a point $p\in\cV$ and a Cartesian diagram
	\begin{center}
		\begin{tikzcd}
			C \arrow[d] \arrow[r,"\varphi"] & \cC_\cV \arrow[d, "\pi"] \\
			* \arrow[r, "p"] \arrow[u, bend left, "{x_i, y_j}"] & \cV \arrow[u, bend left, "{\sigma_i,\tau_j}"]
		\end{tikzcd}
	\end{center}
	such that the following property holds. Given any other continuous family $(\pi_T:\cC_T\to T,\{\sigma'_i\}_{i=1}^m,\{\tau'_j\}_{j=1}^r)$ of stable curves (which is, locally on the topological space $T$, the pullback of an proper analytic family of stable curves), along with a point $q\in T$ and a fibre square
	\begin{center}
		\begin{tikzcd}
			C \arrow[d] \arrow[r,"\psi"] & \cC_T \arrow[d, "\pi"] \\
			* \arrow[r, "q"] \arrow[u, bend left, "{x_i, y_j}"] & T \arrow[u, bend left, "{\sigma_i',\tau_j'}"]
		\end{tikzcd}
	\end{center}
	there exists an open neighborhood $T'\subset T$ of $q$ and a fibre square
	\begin{center}
		\begin{tikzcd}
			\cC_T \arrow[d, "\pi_T"] \arrow[rr,"\tilde f"] & & \cC_\cV \arrow[d, "\pi"] \\
			T \arrow[rr, "f"] \arrow[u, bend left, "{\sigma_i', \tau_j'}"] & & \cV \arrow[u, bend left, "{\sigma_i,\tau_j}"]
		\end{tikzcd}
	\end{center}
	which is compatible with the two fibre squares above, i.e., $f(q)=p$ and $\tilde f\circ\psi = \varphi$. Moreover, the germs of $f$ and $\tilde f$ at $q$ are unique. Next, let $h_1,\ldots,h_r:X\to\bC$ denote smooth functions such that for each $1\le j\le r$, we have $h_j(u(y_j)) = 0$ and $d(h_j\circ u)_{y_j}$ is an isomorphism. (Thus, $H_j:=h_j^{-1}(0)$ is a smooth codimension 2 submanifold of $X$ near $u(y_j)$ and meets $u$ transversely at $y_j$ for $1\le j\le m$.)\\\\
	Let $\Mbar:=\Mbar_J(\pi,X)_\beta^\text{top}$ be the $\cV$-space we get by the extension (as explained in Remark \ref{non-reg}) of Lemma \ref{top-moduli} applied to the family $\pi:\cC_\cV\to\cV$. We have the corresponding universal family of stable maps
	\begin{center}
		\begin{tikzcd}
			\cC_\cV \arrow[d, "\pi"] & \arrow[l] \cC_{\Mbar} \arrow[d, "{\pi_{\Mbar}}"] \arrow[r,"\text{ev}_\pi"] & X\\
			\cV & \arrow[l] \Mbar &
		\end{tikzcd}
	\end{center}
	Now, consider the continuous function $h:\Mbar\to\bC^r$ defined by
	\begin{align}
		(v\in\cV, g:\pi^{-1}(v)\to X)\mapsto \left((h_1\circ g\circ \sigma_1)(v),\ldots,(h_r\circ g\circ \sigma_r)(v)\right).
	\end{align}
	Let $\Mbar':=h^{-1}(0)\subset\Mbar$ be the subset where $h$ is zero. Let $\Mbar_\text{reg}\subset\Mbar$ be the open subset consisting of the regular stable maps and by Theorem \ref{stable-rep}, $\Mbar_\text{reg}/\cV$ naturally carries the structure of a rel--$C^\infty$ manifold.
	\begin{definition}[Regular Locus of $\Mbar'_\text{reg}$]
		Note that $h$ is a rel--$C^\infty$ function on the rel--$C^\infty$ manifold $\Mbar_\text{reg}/\cV$. We define
		$\Mbar'_\text{reg}\subset\Mbar'$ to be the open subset consisting of pairs $(v,g)$ such that $g$ is a regular stable map $\pi^{-1}(v)\to X$ and the linearization $T_h|_{(v,g)}:T_{\cM_\text{reg}/\cV}|_{(v,g)}\to\bC^r$ is surjective. It's immediate from the implicit function theorem (with continuous dependence on parameters) that		$\Mbar'_\text{reg}$ is a locally closed rel--$C^\infty$ submanifold of $\Mbar_\text{reg}$ over $\cV$.	
	\end{definition}
	\noindent On $\Mbar'/\cV$, there is a corresponding family
	\begin{center}
		\begin{tikzcd}
			\cC_\cV \arrow[d, "\pi"] & \arrow[l] \cC_{\Mbar'} \arrow[d, "{\pi_{\Mbar'}}"] \arrow[r,"\text{ev}'_\pi"] & X\\
			\cV & \arrow[l] \Mbar' &
		\end{tikzcd}
	\end{center}
	of stable maps, restricted from the family $\text{ev}_\pi$ defined over $\Mbar$.
	\begin{lemma}
		If $(p,u)\in\normalfont\Mbar_\text{reg}$, then we have $(p,u)\in\normalfont\Mbar'_\text{reg}$.
	\end{lemma}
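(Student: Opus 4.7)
The plan is to verify directly the two conditions defining $\Mbar'_\text{reg}$ at $(p,u)$: first that $(p,u) \in \Mbar'$ (i.e., $h(p,u) = 0$), and second that $T_h|_{(p,u)} : T_{\Mbar_\text{reg}/\cV}|_{(p,u)} \to \bC^r$ is surjective. The first assertion is immediate from the construction of $h$, since by choice $h_j(u(y_j)) = 0$ for each $j$ and the section of $\pi$ corresponding to $y_j$ passes through $y_j$ in the fiber over $p$.

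For surjectivity of $T_h|_{(p,u)}$, I first identify the relative tangent space $T_{\Mbar_\text{reg}/\cV}|_{(p,u)}$ with $\ker D(\delbar_J)_u$ — the infinitesimal deformations of $u$ with the domain curve fixed — on which $T_h|_{(p,u)}$ acts by
\begin{align}
\xi \;\longmapsto\; \bigl(dh_1(u(y_1))\cdot\xi(y_1),\ \ldots,\ dh_r(u(y_r))\cdot\xi(y_r)\bigr).
\end{align}
The strategy is to exhibit a subspace of $\ker D(\delbar_J)_u$ on which this map is already surjective, namely the image of the infinitesimal reparametrization map $v \mapsto du(v)$ from $\fa\fu\ft(C, x_1, \ldots, x_m)$. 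The image lies in $\ker D(\delbar_J)_u$ because flowing $u$ by a holomorphic vector field (preserving the $x_i$) yields a family of $J$-holomorphic maps from the fixed pointed domain, whose derivative at the identity lies in the kernel of the linearized $\delbar_J$.

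The composition $\fa\fu\ft(C, x_1, \ldots, x_m) \to \ker D(\delbar_J)_u \xrightarrow{T_h} \bC^r$ then factors as
\begin{align}
\fa\fu\ft(C, x_1, \ldots, x_m) \xrightarrow{\mathrm{ev}_y} \bigoplus_{j=1}^{r} T_{y_j}C \xrightarrow{\ \oplus_j\, d(h_j \circ u)_{y_j}\ } \bC^r,
\end{align}
where $\mathrm{ev}_y$ evaluates a vector field at the $y_j$'s. The second arrow is a $\bC$-linear isomorphism by the hypothesis that each $d(h_j \circ u)_{y_j}$ is an isomorphism. The main content of the argument is the bijectivity of $\mathrm{ev}_y$: its kernel is $\fa\fu\ft(C, x_1, \ldots, x_m, y_1, \ldots, y_r) = 0$ by the minimality of the $y_j$'s, and both source and target have complex dimension $r$ (again by minimality), so $\mathrm{ev}_y$ is an isomorphism. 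The composition is therefore surjective, which completes the proof. The only nontrivial input is the identification of the relative tangent space combined with the minimality-based dimension count; once these are in place, the argument reduces to linear algebra, so I do not expect a serious obstacle.
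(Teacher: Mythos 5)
Your proof is correct and follows essentially the same route as the paper: both identify $T_{\Mbar_\text{reg}/\cV}|_{(p,u)}$ with $\ker D(\delbar_J)_u$, factor $T_h$ through $du$ restricted to $\fa\fu\ft(C,x_1,\ldots,x_m)$ (which the paper writes as $H^0(\tilde C,T_{\tilde C}(-D))$) and the evaluation map at the $y_j$, and conclude surjectivity from the minimality of the auxiliary points $y_j$ and the choice of the $h_j$.
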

	\begin{proof}
		Note that $h(p,u)=0$. We therefore only need to prove the surjectivity of $T_h|_{(p,u)}$. Let's note that	$T_{\Mbar/\cV}|_{(p,u)}$ can be canonically identified with the kernel of the surjective linear operator
		\begin{align}
			 \Omega^0(C,u^*T_X)\xrightarrow{D(\delbar_J)_u} \Omega^{0,1}_J(\tilde C, u^*T_X)
		\end{align}
		and $T_h|_{(p,u)}$ can be identified with the map 
		\begin{align}
			\cL:\ker D(\delbar_J)_u&\to\bC^r\\
			\xi&\mapsto (dh_1(\xi(y_1)),\ldots,dh_r(\xi(y_r))).
		\end{align}
		Now, observe that we have a commutative diagram
		\begin{center}
			\begin{tikzcd}
				H^0(\tilde C, T_{\tilde C}(-D)) \arrow[r, "du"] \arrow[d, "\text{ev}"] & 
				\ker D(\delbar_J)_u \arrow[d, "\cL"]\\
				\displaystyle\bigoplus_{j=1}^r T_{\tilde C,y_j} \arrow[r, "\bigoplus_{j=1}^rd(h_j\circ u)_{y_j}"] & \bC^r
			\end{tikzcd}
		\end{center}
		where $D$ is the divisor on the normalization $\tilde C\to C$ consisting of the preimages of the nodes of $C$		and the marked points $x_1,\ldots,x_m$ and $\text{ev}$ represents the evaluation map at the points $y_1,\ldots,y_r$. Note that $H^0(\tilde C,T_{\tilde C}(-D)) = \fa\fu\ft(C,x_1,\ldots,x_m)$ and thus, the map		$\text{ev}$ is injective (and thus, an isomorphism by dimension considerations). The bottom horizontal arrow is an isomorphism by definition, and thus, it follows that $\cL$ is surjective.
	\end{proof}
	\noindent The family of stable maps $\text{ev}'_\pi$ defined over $\Mbar'$ has the following universal property. 
	\begin{proposition}[\'Etale Chart]\label{et}
		Let $\pi_T:\cC_T\to T$ be a family of $m$-pointed prestable curves of genus $g$ (locally given as the pull back of a proper flat analytic family) with marked points given by sections $\sigma'_1,\ldots,\sigma'_m$. Let $F:\cC_T\to X$ be a continuous map which is $J$-holomorphic on each fibre of $T$ and $q\in T$ be a point. Assume we have a commutative diagram
		\begin{center}
		\begin{tikzcd}
			& & X\\
			C \arrow[d] \arrow[urr, bend left, "u"] \arrow[r,"\psi"] & \cC_T \arrow[d, "\pi"] \arrow[ur, "F"]&\\
			* \arrow[r, "q"] \arrow[u, bend left, "{x_i}"] & T \arrow[u, bend left, "{\sigma_i'}"] &
		\end{tikzcd}
	\end{center}
	where the square is Cartesian. Then, there exists an open neighborhood of $q$ (still denoted $T$) and continuous maps $f:T\to \Mbar'$ and $\tilde f:\cC_T\to\cC_{\Mbar'}$ (with the latter fibrewise holomorphic) such that the diagram
	\begin{center}
		\begin{tikzcd}
             & X  &  &  \\
             &     &  &  \\
             & C \arrow[uu, "u"'] \arrow[ld, "\psi"'] \arrow[dd] \arrow[rrd, "{(\varphi,u)}"] &  &  \\
		\cC_T \arrow[ruuu, "F", bend left] \arrow[rrr, "\tilde f"' near end, crossing over] \arrow[dd, "\pi_T"'] &   
		&  & \cC_{\Mbar'} \arrow[dd, "\pi_{\Mbar'}"'] \arrow[lluuu, "\normalfont\text{ev}'_\pi"', bend right] \\
             & * \arrow[ld, "q"] \arrow[rrd, "{(p,u)}"] \arrow[uu, "x_i"' near start, bend right, crossing over]   &  &  \\
		T \arrow[rrr, "f"' near end] \arrow[uu, "\sigma_i'", bend right, ]  & &  & \Mbar' \arrow[uu, "\sigma_i", bend right]                              
		\end{tikzcd}
	\end{center}
	commutes (with all squares being Cartesian). Moreover, the germs of $f$ and $\tilde f$ near $q$ are unique.
	\end{proposition}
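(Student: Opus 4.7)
The plan is to first extend the pinning points $y_j$ to continuous sections of $\cC_T/T$ near $q$, then reduce to the versal property of $\cC_\cV/\cV$, and finally combine with $F$ using Lemma \ref{top-moduli} (in its non-regular form from Remark \ref{non-reg}).

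The main analytical step is to produce continuous sections $\tilde y_1,\ldots,\tilde y_r:T'\to\cC_T^\circ$ on a neighborhood $T'\subset T$ of $q$ with $\tilde y_j(q)=\psi(y_j)$ and $h_j\circ F\circ\tilde y_j\equiv 0$, with germs at $q$ uniquely determined by these conditions. Since $F$ is fibrewise $J$-holomorphic, Lemma \ref{elliptic-reg} shows that $F|_{\cC_T^\circ}$ is rel--$C^\infty$ over $T$. In a local product chart $\bD_{T'}\subset\cC_T^\circ$ around $\psi(y_j)$, the map $(z,t)\mapsto ((h_j\circ F)(z,t),t)$ is rel--$C^\infty$ and its fibre derivative at $(y_j,q)$ is the isomorphism $d(h_j\circ u)_{y_j}$; the inverse function theorem with parameters (Lemma \ref{rel-ift}) then yields the $\tilde y_j$ and the asserted uniqueness.

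After shrinking $T'$, the augmented marked points $\sigma'_i(t'),\tilde y_j(t')$ remain pairwise distinct and smooth on each fibre, and the resulting $(m+r)$-pointed curves $(C_{\varphi(t')},\sigma'_i(t'),\tilde y_j(t'))$ are stable (openness of stability in the stack of prestable curves, as used in the proof of Lemma \ref{stable-is-open}). The universal property of the versal family $\pi:\cC_\cV\to\cV$ applied to this augmented family then produces continuous maps $f_0:T'\to\cV$ and $\tilde f_0:\cC_{T'}\to\cC_\cV$ with $f_0(q)=p$, fitting into compatible Cartesian squares, satisfying $\tilde f_0\circ\sigma'_i=\sigma_i\circ f_0$ and $\tilde f_0\circ\tilde y_j=\tau_j\circ f_0$, with germs at $q$ uniquely determined.

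The triple $(f_0,\tilde f_0,F)$ is exactly the data of an element of $\Mbar(T')$ under the identification of Lemma \ref{top-moduli} (extended to non-regular maps via Remark \ref{non-reg}), corresponding to a continuous map $f:T'\to\Mbar$. Combining $h_j\circ F\circ\tilde y_j\equiv 0$ with $\tilde f_0\circ\tilde y_j=\tau_j\circ f_0$ gives $h_j\circ\text{ev}_\pi\circ\tau_j\circ f_0\equiv 0$, so $f$ factors through $\Mbar'=h^{-1}(0)\subset\Mbar$. Define $\tilde f:\cC_{T'}\to\cC_{\Mbar'}=\cC_\cV\times_\cV\Mbar'$ by $z\mapsto(\tilde f_0(z),f(\pi_T(z)))$; the commutativity of the large diagram is then immediate from the construction. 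For uniqueness, any other valid pair $(f',\tilde f')$ determines sections $\tilde y'_j:T'\to\cC_T^\circ$ by the requirement $\tilde f'\circ\tilde y'_j=\tau_j\circ f'$; these satisfy the same initial condition and implicit equation as the $\tilde y_j$ (since $h_j\circ\text{ev}'_\pi\circ\tau_j$ vanishes identically on $\Mbar'$), hence agree in germs at $q$ by Lemma \ref{rel-ift}; the uniqueness of $f'$ and $\tilde f'$ then follows from the uniqueness in the versal property. The main obstacle is the first step, where the implicit function theorem must be applied in a parametric family, which in turn requires knowing that $F$ is not merely continuous but rel--$C^\infty$ on the smooth locus, as guaranteed by Lemma \ref{elliptic-reg}.
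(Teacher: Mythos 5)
Your proposal is correct and follows essentially the same route as the paper's proof: first produce the $r$ continuous pinning sections via elliptic regularity (Lemma \ref{elliptic-reg}) and the parametric implicit function theorem (Lemma \ref{rel-ift}) applied to $h_j\circ F$, then invoke the versal property of $\cC_\cV\to\cV$ to get the classifying maps, then the universal property of $\Mbar^{\text{top}}$ (via Remark \ref{non-reg}) to land in $\Mbar$, observe that the pinning conditions force the image into $h^{-1}(0)=\Mbar'$, and finally run the same chain of implications in reverse for uniqueness. The only difference is that you spell out the IFT step explicitly whereas the paper just cites the proof of Lemma \ref{stable-is-open}.
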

	\begin{proof}
		Let us first prove the existence of $f$ and $\tilde f$. Exactly as in the proof of Lemma \ref{stable-is-open},		there are unique (germs of) continuous sections $\tau'_j$ defined near $q\in T$ such that we have $\tau'_j(q) = \psi(y_j)$ and $h_j\circ F\circ\tau'_j\equiv 0$ (for each $1\le j\le r$). Now, by the universal property of the universal family $\pi:\cC_\cV\to\cV$, there exists unique (germs near $q$ of) maps $g,\tilde g$ such that the diagram
		\begin{center}
		\begin{tikzcd}
             & C \arrow[ld, "\psi"'] \arrow[dd] \arrow[rrd, "\varphi"] &  &  \\
		\cC_T \arrow[rrr, "\tilde g" near end, crossing over] \arrow[dd, "\pi_T"'] &                                                                           
		&  & \cC_\cV \arrow[dd, "\pi"'] \\
             & * \arrow[ld, "q"] \arrow[rrd, "p"] \arrow[uu, "{x_i,y_j}" near start, bend right, crossing over]   &  &  \\
		T \arrow[rrr, "g"' below] \arrow[uu, "{\sigma_i',\tau'_j}" right, bend right, ]  & &  & 
		\cV \arrow[uu, "{\sigma_i,\tau_j}" right, bend right]                                 
		\end{tikzcd}
		\end{center}
		is commutative and the squares are Cartesian. By the universal property of $\Mbar = \Mbar_J(\pi,X)^\text{top}$,	we get unique maps $f$ and $\tilde f$ so that the diagram in the statement of the proposition commutes (with every instance of $\Mbar'$ replaced by $\Mbar$). However, by our choice of the sections $\tau_j'$, we see that $h\circ f = 0$ and thus, $f$ maps into $\Mbar'$. This proves the existence of $f$ and $\tilde f$ with the desired properties.\\\\
		Conversely, given maps $f'$ and $\tilde f'$ making the diagram in the statement of the proposition commute, we		will show that they are the same as $f$ and $\tilde f$ near $q$. The maps $f',\tilde f'$ determine maps $g',\tilde g'$ to $\cV,\cC_\cV$ respectively. Pulling back the sections $\tau_j$ of $\pi$ (along $g',\tilde g'$), we get sections $\tau_j''$ of $\pi_T$ satisfying $\tau''_j(q)=\psi(y_j)$. The crucial observation now is that since $f$ maps $T$ into $\Mbar' = h^{-1}(0)\subset\Mbar$, it follows that $h_j\circ F\circ \tau_j''\equiv 0$, which implies $\tau_j''\equiv \tau_j'$. This implies that, as germs near $q$, we have $(g,\tilde g) \equiv (g',\tilde g')$ and thus, $(f,\tilde f) \equiv (f',\tilde f')$.
	\end{proof}
	\begin{remark}
	For readers familiar with the language of topological stacks, Proposition \ref{et} just says that the map of stacks $\Mbar'\to\Mbar_{g,m}(X,J)_\beta$ -- defined by the family $\text{ev}'_\pi$ of stable maps over $\Mbar'$ -- is \'etale, i.e. a local homeomorphism, at the point $(p,u)$.
	\end{remark}
	\appendix
	\section{Weak Rel--$C^r$ Morphisms}\label{wk-appx}
		\subsection{Definitions}
		\label{wkrel}
		We define an auxiliary notion of rel-$C^r$ morphisms for relative Banach manifolds which generalizes the notion already defined in finite (relative) dimensions.
		\begin{definition}
			Let $S$ be a topological space and $V,W$ be $C^\infty$ Banach manifolds. If $U\subset V\times S$ is open, then a map $F:U\subset V\times S\to W\times S$ is called 
			\begin{enumerate}[(1)]
				\item a \textbf{weak rel--$C^0$ $S$-morphism} if it is continuous on $U$ and of the form
				\begin{align}
					(v,s)\mapsto (f(v,s),s).
				\end{align}
				\item a \textbf{weak rel--$C^r$ $S$-morphism} for an integer $r\ge 1$ if the following hold.
				\begin{itemize}
					\item It is a weak rel--$C^0$ $S$-morphism.
					\item For each $s\in S$, the map $v\mapsto f(v,s)$ is of class $C^1$.
					\item The map $dF:(TV\times S)|_U\to TW\times S$ on the tangent bundles (induced				by differentiating $F$ in the $V$-directions) is a weak rel--$C^{r-1}$ $S$-morphism.
				\end{itemize}
				\item a \textbf{weak rel--$C^\infty$ $S$-morphism} if it is a weak rel--$C^r$ $S$-morphism for all
				integers $r\ge 0$.
			\end{enumerate}
		\end{definition}
		\begin{remark}\label{wk-properties} 
		We note a few assertions which are evident.
		\begin{enumerate}[(1)]
			\item ($C^r$ is weak rel--$C^r$.) If $S$ is also a $C^\infty$ Banach manifold and the map $F$ is $C^r$ and a weak rel--$C^0$ $S$-morphism, then it is also a weak rel--$C^r$ $S$-morphism.
			\item (Rel--$C^r$ is weak rel--$C^r$.) If the Banach manifolds $V,W$ are finite dimensional, then weak rel--$C^r$ $S$-morphisms are the same as rel--$C^r$ $S$-morphisms $U\subset V\times S\to W\times S$.
			\item (Chain Rule.) The composition of two weak rel--$C^r$ $S$-morphisms is again a	weak rel--$C^r$ $S$-morphism.
			\item (Sheaf Property.)	A map $F:U\subset V\times S\to W\times S$ is a weak rel--$C^r$ $S$-morphism if there is an open cover $\{U_\alpha\}_{\alpha\in I}$ of $U$ such that $F|_{U_\alpha}$ is a weak rel--$C^r$ $S$-morphism for all $\alpha\in I$. 
		\end{enumerate}
		\end{remark}
		\noindent We also note the following useful fact.
		\begin{lemma}\label{wk-linear}
			Suppose $L:V\times S\to W\times S$, given by $L(v,s) = (L_s(v),s)$ is a continuous map, with $L_s:V\to W$ being a (bounded) linear map of Banach spaces for every $s\in S$. Then, $L$ is a weak rel--$C^\infty$ $S$-morphism.
		\end{lemma}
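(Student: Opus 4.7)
My plan is to prove this by induction on $r$, noting that the property of ``being a continuous family of bounded linear maps'' is preserved under passing to the tangent-level differential (on the doubled Banach spaces). This will let the induction close on itself cleanly.

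For the base case $r = 0$, the statement is immediate: $L$ is assumed continuous and has the required form $(v,s)\mapsto(L_s(v),s)$, so it is a weak rel--$C^0$ $S$-morphism by definition.

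For the inductive step, suppose $r \geq 1$ and the result holds for $r-1$. Fix $s \in S$. Since $L_s : V \to W$ is a bounded linear map between Banach spaces, it is (classically) $C^\infty$, and its Fr\'echet derivative at any point $v \in V$ equals $L_s$ itself. Hence the map $v \mapsto L_s(v)$ is $C^1$ as required by the definition of weak rel--$C^1$. Next, the induced differential on tangent bundles takes the explicit form
\begin{align}
dL : (V \times V) \times S &\to (W \times W) \times S\\
((v_0, v_1), s) &\mapsto ((L_s(v_0), L_s(v_1)), s).
\end{align}
Defining $\tilde L_s : V \oplus V \to W \oplus W$ by $\tilde L_s(v_0, v_1) = (L_s(v_0), L_s(v_1))$, each $\tilde L_s$ is again a bounded linear map between Banach spaces, and the joint map $(v_0, v_1, s) \mapsto (\tilde L_s(v_0, v_1), s)$ is continuous because $L$ is (it is essentially two parallel copies of $L$). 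Thus $dL$ satisfies the hypotheses of the lemma with $V, W$ replaced by $V \oplus V, W \oplus W$.

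By the inductive hypothesis applied to $dL$, it is a weak rel--$C^{r-1}$ $S$-morphism, which by definition means $L$ is a weak rel--$C^r$ $S$-morphism. Since $r$ was arbitrary, $L$ is weak rel--$C^\infty$. There is no real obstacle here: the only thing to check is that the ``continuous family of linear maps'' property is self-reproducing under differentiation, and this is essentially automatic because the derivative of a linear map is the map itself.
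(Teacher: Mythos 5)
Your proof is correct and follows essentially the same route as the paper: induction on $r$, observing that $dL$ is again a continuous family of bounded linear maps on the doubled Banach spaces $V\oplus V\to W\oplus W$, and applying the inductive hypothesis.
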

		\begin{proof}
			We proceed by induction on $r\ge 0$ to show that $L$ is a weak rel--$C^r$ $S$-morphism. The base case $r=0$ is obvious, so let $r\ge 1$. Let us note that for each $s\in S$, since $L_s$ is linear, it is $C^1$ as a map	from $V\to W$ and the map $dL$ is given by
			\begin{align}
				dL:V\times V\times S\to W\times W\times S\\
				(v,\delta v,s)\mapsto (L_s(v), L_s(\delta v), s)
			\end{align}
			where we have used the identification $TV = V\times V$ and $TW = W\times W$. Clearly, $dL$ is continuous, and thus, by the induction hypothesis, it must be a weak rel--$C^{r-1}$ $S$-morphism. Thus, $L$ is a weak rel--$C^r$ $S$-morphism. This completes the induction.
		\end{proof}
		\subsection{Weak Relative Smoothness of Evaluation Maps}
		We follow the notation of \textsection\ref{sc-ift} (specifically, \textsection\ref{ift-chart}) in this appendix. Our aim here is to establish the weak relative smoothness of evaluation maps.
		\begin{lemma}\label{wkrel-ev}
			The map $e_2$ from (\ref{e2}) is a weak rel--$C^\infty$ $\cC$-morphism.
		\end{lemma}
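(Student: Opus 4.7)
The plan is to verify weak relative smoothness locally on $\cC$ via the sheaf property (Remark \ref{wk-properties}(4)), and to factor $e_2$ on each patch as a composition of a ``linear-in-$\xi$'' evaluation followed by a classically smooth map; then Lemma \ref{wk-linear}, Remark \ref{wk-properties}(1), and the chain rule from Remark \ref{wk-properties}(3) will conclude the argument.

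Concretely, I will cover $\cC$ by open sets $U$ of two types. Type (i) will consist of open sets on which each $z \in U$ lies either away from the cutoff transition regions $\{s \in [R/2-1, R/2+1]\}$ of all necks, or in a fiber over which some $\alpha_j = 0$ (so that the transition region of neck $j$ is vacuous at $z$). Here, in local Euclidean coordinates $\varphi_\iota$ on $X$, we simply have $(\oplus_\alpha\exp_u\xi)(z) = \exp_{u(z)} \xi(z)$. Type (ii) will consist of open sets lying strictly inside the transition region of some neck $j$, where $\alpha_j$ is automatically bounded away from zero, and the formula (in coordinates $\varphi_\iota$ at $\iota = u(\nu_j)$) reads $\beta_R(s)(u(z) + \xi(z)) + \beta_R(s')(u(z') + \xi(z'))$, with $z, z' \in \tilde C$ the two pre-images of $z$ under the normalization. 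On each such $U$, I will factor $e_2|_U = E \circ L$, where $L : U \times H^{k,\delta} \to U \times W$ is the $U$-morphism that evaluates $\xi$ fiberwise at the finitely many points of $\tilde C$ continuously determined by $z$ (with $W$ a suitable finite-dimensional vector space), and $E : U \times W \to U \times X$ is the classically smooth map assembled from $u$, the cutoffs $\beta_R$, and the chart inverses $\varphi_\iota^{-1}$. The Sobolev embedding $H^{k,\delta} \hookrightarrow C^0(\tilde C)$, valid since $k \geq 10$, will make $L$ jointly continuous and hence weak rel--$C^\infty$ by Lemma \ref{wk-linear}; since $R$ is uniformly bounded on each type (ii) patch, $E$ is classically smooth, hence weak rel--$C^\infty$ by Remark \ref{wk-properties}(1).

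The main obstacle will be in type (i) patches that straddle the locus where some $\alpha_j = 0$: I must check that the finite-dimensional data extracted by $L$ can be chosen to vary continuously across the degeneration. This will use the exponential decay built into $H^{k,\delta}$ at the punctures, which ensures that contributions from evaluating $\xi$ on the ``vanishing'' side of a neck tend to zero as $\alpha_j \to 0$ and can be smoothly dropped from the formula without breaking continuity of $L$. In type (ii) patches this subtlety does not occur, since $(z, z')$ depends continuously on $z$ for the bounded family of cylinders in question.
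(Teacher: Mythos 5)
Your high-level strategy (reduce to an open cover via Remark \ref{wk-properties}(4), then handle each patch by combining Lemma \ref{wk-linear}, Remark \ref{wk-properties}(1), and the chain rule) matches the paper's, and your treatment of the interior and of the necks with $\alpha_j$ bounded away from zero is fine. However, there is a genuine gap at the nodes $\nu_j$ lying over $\{\alpha_j=0\}$, and these points are not covered by either of your two types of patches. Any open neighborhood $U$ of such a $\nu_j$ in $\cC$ necessarily contains points $(z_j,z_j')$ with $0<|z_jz_j'|=|\alpha_j|$ small and $s,s'\approx R_j/2$ (since the node is the ``midpoint'' of the degenerating cylinder), i.e.\ points that lie in the transition region and have $\alpha_j\neq 0$. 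Such points satisfy neither clause of your Type (i) condition and clearly fail the Type (ii) condition, so your cover is incomplete exactly at the hardest locus.

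More importantly, the factorization $e_2|_U = E\circ L$ with $E$ classically smooth cannot be repaired near $\nu_j$ because the cutoff $\beta_R(s)$ has no continuous extension to the node: writing $s-R/2=(s-s')/2$, the limit of $\beta_R(s)$ as $(z_j,z_j')\to(0,0)$ depends on the ratio of $\log|z_j|$ to $\log|z_j'|$, so it takes every value in $[0,1]$ along different approaches. Thus if the cutoffs are put into $E$, the map $E$ is not even continuous, let alone smooth. The paper sidesteps this by noting that the \emph{entire} gluing formula $(\xi,\xi')\mapsto\beta_R(s)\xi(s,t)+\beta_R(s')\xi'(s',t')$ is linear in $(\xi,\xi')$ for each fixed $(z,z')$, hence Lemma \ref{wk-linear} applies to it directly, and what remains to be verified is only its \emph{continuity} as a map $\cU\times H^{k,\delta}_\pm\to\bR$ (Lemma \ref{ev-node}). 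That continuity proof is precisely where the exponential decay enters in a quantitative way (the paper rewrites the formula as $\xi_\infty+\beta_R(s)\zeta(s,t)+\beta_R(s')\zeta'(s',t')$ with $\zeta=\xi-\xi_\infty$, so that the discontinuity of the cutoffs is multiplied by a term going to $0$); your appeal to exponential decay points in the right direction, but to close the gap you should move the cutoffs from $E$ into the linear map $L$ and then prove that single continuity statement, as in Lemma \ref{ev-node}.
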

		\begin{proof}
			By dividing $\cC$ into the necks/ends and a neighborhood of the complement of the necks/ends		(which, on each fibre, may be identified with a neighborhood of $C'$) and using Remark \ref{wk-properties}(4),	the result will follow from Lemmas \ref{ev-int} and \ref{ev-node}.
		\end{proof}
		\begin{lemma}\label{ev-int}
			Let $M$ be a compact manifold and let $Y$ be any manifold of bounded geometry. Define $H^k(M,Y)$ to		be the space of continuous maps $M\to Y$ which are of class $W^{k,2}$ where $2k>\dim M$. Then, the map
			\begin{align}
				e_{k,M,Y}:H^k(M,Y)\times M\to Y\times M\\
				(f,z)\mapsto (f(z),z)
			\end{align}
			is a weak rel--$C^\infty$ $M$-morphism.
		\end{lemma}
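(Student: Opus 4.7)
The plan is to reduce the statement, via local coordinates, to an application of Lemma \ref{wk-linear} applied to pointwise evaluation of vector-valued Sobolev functions. The crucial observation is that evaluation is linear in the function variable, so once we are in a linear model, Lemma \ref{wk-linear} handles all orders of differentiability at once.

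First I would use the sheaf property (Remark \ref{wk-properties}(4)) to localize around an arbitrary $(f_0, z_0) \in H^k(M,Y) \times M$. Since $2k > \dim M$, the Sobolev embedding $H^k \hookrightarrow C^0$ ensures $f_0$ is continuous and that $(f,z) \mapsto f(z)$ is jointly continuous. I would choose a small coordinate chart $z_0 \in U_M \subset M$ and use the exponential map on $Y$ (well-behaved by bounded geometry) to produce the standard coordinate chart on $H^k(M,Y)$ near $f_0$: this models a neighborhood of $f_0$ on an open set in $H^k(M, f_0^*TY)$ via $\xi \mapsto (z \mapsto \exp_{f_0(z)} \xi(z))$. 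On the target, a chart on $Y$ near $f_0(z_0)$ is given by $\exp_{f_0(z_0)}$. After trivializing $f_0^*TY$ over $U_M$ by parallel transport so that $H^k(M, f_0^*TY)$ becomes $H^k(M, W)$ for a Euclidean vector space $W$, the map $e_{k,M,Y}$ in these charts takes the form
\[
(\xi, z) \longmapsto \bigl( G(z, \xi(z)), z \bigr),
\]
on a sufficiently small open subset of $H^k(M, W) \times U_M$, where $G(z, v) := \exp_{f_0(z_0)}^{-1} \exp_{f_0(z)} v$ is a classically smooth function of $(z, v)$.

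This map factors as the composition $(\xi, z) \mapsto (\xi(z), z) \mapsto (G(z, \xi(z)), z)$. For the first factor $L : H^k(M, W) \times U_M \to W \times U_M$, for each fixed $z$ the map $\xi \mapsto \xi(z)$ is a bounded linear operator $H^k(M, W) \to W$ by Sobolev embedding, and $L$ is jointly continuous via the estimate
\[
\|\xi_n(z_n) - \xi(z)\|_W \le C\|\xi_n - \xi\|_{H^k} + \|\xi(z_n) - \xi(z)\|_W,
\]
so Lemma \ref{wk-linear} applies to give that $L$ is a weak rel--$C^\infty$ $U_M$-morphism. The second factor $(w, z) \mapsto (G(z, w), z)$ is classically smooth on a finite-dimensional domain, hence a weak rel--$C^\infty$ $U_M$-morphism by Remark \ref{wk-properties}(1). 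The composition is then a weak rel--$C^\infty$ $U_M$-morphism by the chain rule (Remark \ref{wk-properties}(3)).

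The main technical point to navigate is the chart construction on $H^k(M,Y)$ itself -- one must know that the exponential-based coordinate map is a $C^\infty$ Banach-manifold chart and that transitions between such charts are $C^\infty$ in the classical sense. These are standard consequences of the bounded-geometry assumption; once accepted as background, the substantive content of the lemma is captured entirely by the single application of Lemma \ref{wk-linear}, exploiting the linearity of evaluation in the function variable.
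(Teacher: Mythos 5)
Your proof is correct in substance but takes a genuinely different route from the paper's. The paper's argument is a one-line induction: it observes that the differential of $e_{k,M,Y}$ in the function direction is exactly the evaluation map $e_{k,M,TY}$ for the (bounded-geometry) manifold $TY$, under the identification $T(H^k(M,Y)) \cong H^k(M,TY)$. The recursion defining ``weak rel--$C^r$'' then closes immediately, with no coordinates needed, and all that remains is the base case (continuity via Sobolev embedding, and pointwise $C^1$ differentiability). Your approach instead localizes, passes to exponential charts, and reduces to a single invocation of Lemma \ref{wk-linear} applied to the linear evaluation map, followed by post-composition with a classically smooth finite-dimensional map. Both are valid; the paper's induction is slicker and entirely coordinate-free, while yours isolates the linearity of evaluation as the essential mechanism and makes no appeal to the identification of $T(H^k(M,Y))$.

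One imprecision to fix in your writeup: a trivialization of $f_0^*TY$ over $U_M$ is only local on $M$, so it does not turn $H^k(M, f_0^*TY)$ into $H^k(M,W)$ -- that would require a global trivialization of $f_0^*TY$ over all of $M$, which need not exist. The remedy is to keep the domain of $L$ as $H^k(M, f_0^*TY) \times U_M$ and absorb the parallel-transport identification $\tau_z : T_{f_0(z)}Y \to W := T_{f_0(z_0)}Y$ into $L$, i.e.\ set $L_z(\xi) := \tau_z(\xi(z))$. Each $L_z$ is still bounded linear and the family is jointly continuous, so Lemma \ref{wk-linear} applies exactly as you intend; correspondingly $G$ should be written as $G(z,w) := \exp_{f_0(z_0)}^{-1}\exp_{f_0(z)}(\tau_z^{-1}w)$ on a small neighborhood in $U_M \times W$, which is classically smooth by bounded geometry. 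With this adjustment your argument goes through.
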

		\begin{proof}
			Continuity of $e_{k,M,Y}$ is a direct consequence of the Sobolev embedding $W^{k,2}\subset C^0$ on $M$.		It's also immediate that the evaluation map $H^k(M,Y)\to Y$ at each $m\in M$ is $C^1$. Moreover,	we see that $d(e_{k,M,Y}) = e_{k,M,TY}$. Thus, by using induction, we may conclude that $e_{k,M,Y}$ is a weak rel--$C^\infty$ $M$-morphism.
		\end{proof}
		\noindent In preparation for the next Lemma, define the weighted Sobolev space $H^{k,\delta}_\pm$ as follows.		Elements of $H^{k,\delta}_\pm$ consist of pairs $(\xi,\xi')$ where
		\begin{itemize}
			\item $\xi,\xi':(0,\infty)\times S^1\to\bR$ are continuous functions. On the domain of $\xi$, we denote	the coordinates by $(s,t)$, while on the domain of $\xi'$, we denote the coordinates by $(s',t')$.
			\item There exists $\xi_\infty\in\bR$ such that $\xi-\xi_\infty$ and $\xi'-\xi_\infty$ are both of class $W^{k,2,\delta}$ on $(0,\infty)\times S^1$.
			\item the norm squared of $(\xi,\xi')$ is given by
			\begin{align}
				|\xi_\infty|^2 + \|\xi-\xi_\infty\|^2_{W^{k,2,\delta}} + \|\xi'-\xi_\infty\|^2_{W^{k,2,\delta}}.
			\end{align}
		\end{itemize}
		Now, let $\cU$ be the set of pairs $(z,z')\in\bC^2$ such that $|z|<1$, $|z'|<1$ and $|zz'|<1/4$. Given $(z,z')\in\cU$ and $(\xi,\xi')\in H^{k,\delta}_\pm$, we will define an element $e(z,z',\xi,\xi')\in\bR$ as follows. Let $\alpha:=zz'$. We consider two cases.
		\begin{enumerate}[(1)]
			\item If $\alpha = e^{-(R+i\theta)}\ne 0$, then define glue the cylinder $(0,R)\times S^1$ with coordinates $(s,t)$ to the cylinder $(0,R)\times S^1$ with coordinates $(s',t')$ using the equations $s+s' = R$ and $t+t' = \theta$. We then define $e(z,z',\xi,\xi')$ to be
			\begin{align}\label{e1}
				\beta_R(s)\xi(s,t) + \beta_R(s')\xi'(s',t')
			\end{align}
			evaluated at the points $(s,t)$ and $(s',t')$ satisfying $z = e^{-(s+it)}$ and $z'=e^{-(s'+it')}$.
			\item If $\alpha = 0$, we proceed as follows. If $z = 0$ and $z'\ne 0$, we define $e(z,z',\xi,\xi')$ to be
			\begin{align}
				\xi'(s',t')
			\end{align}
			evaluated at the point $(s',t')$ satisfying $z' = e^{-(s'+it')}$. We define $e(z,z',\xi,\xi')$ analogously if $z\ne 0$ and $z'=0$. Finally, if $z=z'=0$, we define $e(z,z',\xi,\xi'):=\xi_\infty$.
		\end{enumerate}
		\begin{lemma}\label{ev-node}
			The map 
			\begin{align}
				e_1:\cU\times H^{k,\delta}_\pm&\to\cU\times\bR\\
				(z,z',\xi,\xi')&\mapsto(z,z',e(z,z',\xi,\xi'))
			\end{align} 
			is a weak rel--$C^\infty$ $\cU$-morphism.
		\end{lemma}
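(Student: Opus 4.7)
The plan is to apply Lemma \ref{wk-linear} with $S = \cU$, $V = H^{k,\delta}_\pm$, and $W = \bR$. For each $(z, z') \in \cU$, the functional $L_{z,z'}(\xi, \xi') := e(z, z', \xi, \xi')$ is $\bR$-linear: in case (1), formula \eqref{e1} expresses it as a linear combination of pointwise evaluations with scalar coefficients $\beta_R(s), \beta_R(s')$; in case (2), it is either a pointwise evaluation or the linear functional $(\xi, \xi') \mapsto \xi_\infty$. Boundedness of $L_{z,z'}$ follows in each case from the weighted Sobolev embedding $W^{k,2,\delta} \hookrightarrow C^0$ (valid since $2k > 2$), which produces the uniform bound $|\xi(s,t) - \xi_\infty| \le C\|\xi - \xi_\infty\|_{W^{k,2,\delta}}$ (and analogously for $\xi'$). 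Hence the real content is establishing joint continuity of $e_1$ on $\cU \times H^{k,\delta}_\pm$, after which Lemma \ref{wk-linear} concludes.

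For joint continuity at a point $(z_0, z_0', \xi_0, \xi_0')$: if $\alpha_0 := z_0 z_0' \neq 0$, one is in the interior of case (1), and continuity follows from continuity of $(z, z') \mapsto (R, \theta)$, uniform convergence of $\beta_{R_n}$ on compacta, and continuity of pointwise evaluation $(\xi, s, t) \mapsto \xi(s, t)$ on $H^{k,\delta}_\pm$ over bounded regions of $(0, \infty) \times S^1$. The substantive case is $\alpha_0 = 0$, where I check continuity against approaching sub-sequences. Sub-sequences with $\alpha_n = 0$ involve only pointwise evaluation or $\xi_\infty$, both obviously continuous. Sub-sequences with $\alpha_n \neq 0$ split further: if $z_0 = z_0' = 0$, then $s_n, s_n', R_n \to \infty$, and the partition-of-unity identity $\beta_{R_n}(s_n) + \beta_{R_n}(s_n') = 1$ (from \eqref{cutoff2}, since $s_n + s_n' = R_n$) combined with the weighted Sobolev decay $|\xi_n(s, t) - \xi_{\infty, n}| \le C e^{-\delta s}\|\xi_n - \xi_{\infty, n}\|_{W^{k,2,\delta}}$ forces the gluing formula to converge to $\xi_{\infty, 0}$; if exactly one of $z_0, z_0'$ vanishes, say $z_0 \neq 0, z_0' = 0$, then $s_n$ stays bounded while $s_n', R_n \to \infty$, so $s_n - R_n/2 = (s_n - s_n')/2 \to -\infty$ forces $\beta_{R_n}(s_n) \to 1$ and symmetrically $\beta_{R_n}(s_n') \to 0$, which combined with boundedness of $\xi_n'(s_n', t_n')$ and continuity of $\xi_n(s_n, t_n) \to \xi_0(s_0, t_0)$ yields the case-(2) value $\xi_0(s_0, t_0)$.

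The main technical obstacle is this joint continuity analysis at the nodal locus $\{\alpha = 0\}$, where the definition of $e$ changes form and $R_n \to \infty$ forces the evaluation points to drift to infinity along the cylinder; one must carefully balance the asymptotic behaviour of the cutoffs $\beta_{R_n}$ against the exponential decay afforded by the weighted Sobolev norms. Once continuity is verified in all cases, Lemma \ref{wk-linear} immediately upgrades the fiberwise linearity and boundedness of $L_{z,z'}$ to weak rel--$C^\infty$ over $\cU$.
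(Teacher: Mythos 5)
Your proof is correct and follows essentially the same route as the paper: reduce via Lemma \ref{wk-linear} to continuity of $e$, then verify continuity along sequences by splitting into cases on $\alpha = zz'$, using the exponential decay estimate $|\xi(s,t)-\xi_\infty|\le Ce^{-\delta s}\|\xi-\xi_\infty\|_{k,\delta}$ from the weighted Sobolev embedding together with the asymptotics of the cutoffs $\beta_{R}$. The paper organizes the case analysis by first rewriting $e(z,z',\xi,\xi')=\xi_\infty+\beta_R(s)\zeta(s,t)+\beta_R(s')\zeta'(s',t')$ (which you use implicitly via the partition-of-unity identity $\beta_{R}(s)+\beta_{R}(s')=1$), but the underlying argument is the same.
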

		\begin{proof}
			Note that the map $e_1$ is of the form considered in Lemma \ref{wk-linear}. Thus, it will be enough to		show that $e$ is continuous. Writing $\xi = \zeta + \xi_\infty$ and $\xi' = \zeta' + \xi_\infty$, we get	the exponential decay estimate
			\begin{align}
				|\zeta(s,t)|&\le Ce^{-\delta s}\|\zeta\|_{k,\delta}\\ 
				|\zeta'(s',t')|&\le Ce^{-\delta s'}\|\zeta'\|_{k,\delta}
			\end{align} 
			for some suitable constant $C>0$ (independent of $\xi,\xi'$) coming from the Sobolev embedding $W^{k,2}\subset C^0$. Let's note that we can slightly change the formula (\ref{e1}) for $e(z,z',\xi,\xi')$, so that it can be interpreted suitably even when $\alpha:=zz'=0$, as follows
			\begin{align}
				e(z,z',\xi,\xi') = \xi_\infty + \beta_R(s)\zeta(s,t) + \beta_R(s')\zeta'(s',t').
			\end{align}
			To interpret this formula when $\alpha:=zz'=0$, we agree to set $R=\infty$ and $\beta_\infty\equiv 1$ in	this case. When $z = 0$, we set $\zeta(s,t):=0$ and when $z'=0$, we set $\zeta'(s,t):=0$ (this can be interpreted as setting $s=\infty$ and leaving $t$ undefined and, respectively, setting $s'=0$ and leaving $t'$ undefined). \\\\
			Now, let's consider a sequence $(z_n,z_n',\xi,\xi')\to(z,z',\xi,\xi')\in\cU\times H^{k,\delta}_\pm$. We immediately note that $\xi_{n,\infty}\to\xi_\infty$ and thus, we need to only show the convergence $\beta_{R_n}(s_n)\zeta_n(s_n,t_n)\to \beta_R(s)\zeta(s,t)$ for the second term (and the analogous statement for the third term). We deal only with the second term here (the argument for the third 
			term is symmetric). If $\alpha := zz'\ne 0$, then we obviously have $\beta_{R_n}(s_n)\to\beta_R(s)$ and
			\begin{align}\label{ptwise}
				\|\zeta_n(s_n,t_n)-\zeta(s,t)\|\le Ce^{-\delta s_n}\|\zeta_n-\zeta\|_{k,\delta}+
				\|\zeta(s_n,t_n)-\zeta(s,t)\|\to 0
			\end{align}
			which completes the proof in this case. Now, suppose $\alpha = zz'=0$ in which case we have $\beta_R(s)=1$.	Now, if $z\ne 0$, then we have $s_n\to s<\infty$ and $t_n\to t\in S^1$ which gives $\beta_{R_n}(s_n)=1$ for large $n$ (since $R_n\to 0$) and $\|\zeta_n(s_n,t_n)-\zeta(s,t)\|\to 0$ as in (\ref{ptwise}). Finally, let's consider the case when $z=0$. In this case, $\beta_R(s)\zeta(s,t)=0$, $\|\zeta_n\|_{k,\delta}\to\|\zeta\|_{k,\delta}$ and we have $s_n\to\infty$ and thus,
			\begin{align}
				\|\beta_{R_n}(s_n)\xi_n(s_n,t_n)\|\le\|\xi_n(s_n,t_n)\|\le Ce^{-\delta s_n}\|\xi_n\|_{k,\delta}\to 0
			\end{align}
			which completes the proof of continuity in this case as well.
		\end{proof}
	\addcontentsline{toc}{section}{References}
	\bibliographystyle{amsalpha}
	\bibliography{rel-smooth-gluing}
\end{document}